\theoremstyle{plain}
\newtheorem{thm}{\protect\theoremname}[section]
 \newcommand\thmsname{\protect\theoremname}
 \newcommand\nm@thmtype{theorem}
 \theoremstyle{plain}
  \theoremstyle{remark}
  \newtheorem{rem}[thm]{\protect\remarkname}
  \theoremstyle{definition}
  \newtheorem*{example*}{\protect\examplename}
  \theoremstyle{definition}
  \newtheorem{example}[thm]{\protect\examplename}
  \theoremstyle{plain}
  \newtheorem{lem}[thm]{\protect\lemmaname}
  \theoremstyle{plain}
  \newtheorem{prop}[thm]{\protect\propositionname}
  \theoremstyle{plain}
  \newtheorem{cor}[thm]{\protect\corollaryname}
  \theoremstyle{definition}
  \newtheorem{my@rem}[thm]{Remark}
  \renewenvironment{rem}{\begin{my@rem}}{\end{my@rem}}
  \providecommand{\examplename}{Example}
  \providecommand{\lemmaname}{Lemma}
  \providecommand{\propositionname}{Proposition}
  \providecommand{\remarkname}{Remark}
  \providecommand{\theoremname}{Theorem}
\providecommand{\theoremname}{Theorem}
 \providecommand{\corollaryname}{Corollary}
\def\N{{\Bbb N}}
\def\Q{{\Bbb Q}}
\def\R{{\Bbb R}}
\def\Z{{\Bbb Z}}
\def\G{{\Bbb G}}
\def\A{{\Bbb A}}
\def\O{{\mathcal O}}
\def\div{{\rm div}}
\def\ord{{\rm ord}}
\def\P{{\Bbb P}}
\def\C{{\Bbb C}}
\def\cR{{\mathcal R}} 
\def\cS{{\mathcal S}} 
\def\cA{{\mathcal A}}
\def\cG{{\mathcal G}}
\def\cM{{\mathcal M}}
\def\cD{{\mathcal D}}
\def\cZ{{\mathcal Z}}
\def\CVD{{\hfill\hfil{\lower 2pt\hbox{\vrule\vbox to 7pt
{\hrule width  5pt\varphifill\hrule}\varphirule}}}\par}
\title[Hyperelliptic Continued Fractions]{Hyperelliptic Continued Fractions and 
Generalized Jacobians}
\author{Umberto  Zannier}\date{}
\begin{document}
\maketitle



\noindent {\bf Abstract}. For a complex polynomial $D(t)$ of even degree, one may define  the continued fraction of $\sqrt{D(t)}$.   This was found relevant  already by Abel  in 1826, and later by Chebyshev, concerning integration of (hyperelliptic) differentials; they realized that, contrary to the classical case of square roots of positive integers treated by Lagrange and Galois, we do not always have pre-periodicity of the partial quotients. 

In this paper we shall prove that, however, a correct  analogue of Lagrange's theorem still exists in full generality:   pre-periodicity of the {\it degrees} of the partial quotients always holds.  Apparently, this fact  was never  noted  before.

This also yields a corresponding formula for the degrees of the convergents, for which we shall   prove  new bounds which are generally best possible (halving the known ones). 

We shall further study other aspects of the continued fraction, like the growth of the heights of partial quotients. Throughout,  some   striking phenomena  appear, related to the geometry of (generalized) Hyperelliptic Jacobians.  Another conclusion central in this paper concerns  the poles of the convergents:  there can be  only finitely many rational ones which  occur infinitely many times. (This is crucial for  applications to a function field version of a question of McMullen.)  

Our methods rely, among other things, on linking  Pad\'e approximants and convergents with divisor relations in generalized Jacobians; this shall   allow an application of a version for algebraic groups, proved in this paper,  of the Skolem-Mahler-Lech theorem. 

\bigskip

\section{Introduction}

This paper is mainly  concerned with the continued fraction expansion of the square root of a complex polynomial $D(t)$, studied already by Abel \cite{[A]}  in  1826 and again by Chebyshev  \cite{Che} in 1852. For completeness we start by recalling very briefly  some basic facts about continued fractions.

\subsection{\tt Continued fractions of numbers and functions} For a real irrational number $\lambda\in\R\setminus\Q$,    its continued fraction  is obtained  by taking the integral part $a_0=\lfloor\lambda\rfloor$,   
writing  $\lambda=a_0+(1/\lambda_1)$ (so $\lambda_1>1$)    and continuing with $\lambda_1$ in place of $\lambda$, and so on. This yields  an expansion $\lambda=a_0+1/a_1+1/a_2+1/\ldots$, denoted also $[a_0,a_1,\ldots ]$, which has various important properties.\footnote{When $\lambda=a/b$ is rational the procedure eventually  terminates and corresponds to the Euclidean algorithm for $a,b$.}   The $a_i$, called {\it partial quotients},  are integers, positive for $i>0$. The rational numbers $p_n/q_n=[a_0,a_1,\ldots ,a_{n-1}]$ obtained by truncating the expansion before $a_n$  (we agree that $(p_0,q_0)=(1,0)$) are called the  {\it convergents}, and they may be shown to provide the `best' rational approximations to $\lambda$.  (See \cite{[C]}.)

 For an irrational Laurent series $\lambda(t)\in \C((t^{-1}))\setminus \C(t)$, we may obtain a continued fraction in a completely  similar way, on replacing the integral part by the  {\it polynomial part},   defined as the unique polynomial $a_0(t)$ such that $\lambda(t)-a_0(t)$ is a power series in $t^{-1}$.  The partial quotients $a_i(t)$ now are polynomials, of degree $>0$ for $i>0$, and the convergents $p_n(t)/q_n(t)$ have similar best-approximation properties with respect to the valuation of $\C((t^{-1}))$. For instance,  $p_n(t)-q_n(t)\lambda(t)$ vanishes at  $t=\infty$  to an order, which is $\deg q_{n+1}(t)$,  maximal with respect to all $p(t)-q(t)\lambda(t)$, for $0\le \deg q<\deg q_{n+1}$.  
 
 We refer to \cite{[vdP]} and \cite{[vdPT]} for these  and other properties and  for references. We shall  also refer to the pairs $(p_n(t),q_n(t))$  as {\it convergents}, when there is no risk of confusion; they are also called {\it continuants} of the continued fraction.  We further recall  that  they provide the so-called   {\it Pad\'e approximants} to $\lambda(t)$ and are relevant in various contexts.\footnote{The fraction $p_n/q_n$ determines the polynomials $p_n,q_n$ only up to a factor; usually here we  implicitly mean that $p_n,q_n$ are calculated formally from the $a_n$ in the well-known natural way.}

\medskip

Now, the simplest real irrational numbers are the quadratic ones, and it is  classical that   the continued fraction  for any such number is eventually periodic, a result due  to Lagrange, with further precision by Galois. For  the numbers $\sqrt D$,  for a positive integer $D$, not a perfect square,  such  periodicity property is strictly related to  the solvability,  in the integer unknowns $x,y$, of the `Pell equation'  (proposed in fact by Fermat) 
\begin{equation*}\label{E.pell}
x^2-Dy^2=1,\qquad y\neq 0,
\end{equation*}
which indeed  admits infinitely many integer solutions for any  given  non-square $D\in\N$. The equation is well known to be fundamental in the theory of integral quadratic forms.

\medskip 

In analogy, let  now $D(t)$ be a non-square complex polynomial of even degree, denoted $2d$. 
We may then expand its square root $\sqrt{D(t)}$ as an irrational Laurent series in $t^{-1}$, and consequently obtain a continued fraction, as above.  One may then ask which of  the above mentioned facts persist in this case.

\subsection{\tt Abel and Chebyshev} It was Abel who, apparently for the first time,  studied  in depth  such polynomial case, in 1826 \cite{[A]}; then the topic was again took  by Chebyshev \cite{Che}. 

To describe this, it shall be convenient to  call {\it Pellian} a polynomial $D=D(t)\in\C[t]$ as above, for which the Pell equation 
 is   solvable in nonzero polynomials $x(t),y(t)\in \C[t]$.\footnote{This notion heavily depends on the ground field, but here we tacitly stick to $\C$.} 

Abel was mainly motivated by the problem of expressing (hyperelliptic) integrals in `finite terms', and found that certain differentials on the curve $u^2=D(t)$ could be likewise integrated when   $D(t)$ is Pellian;  since that time it has been indeed understood   that the topic is intimately related with abelian integrals and Jacobians (of the curves in question).  We shall   see explicit links  later.\footnote{Already in the numerical case, Dirichet class-number formulae and other results indicate a strict connection of the topic with the suitable Picard groups.}  (See also \cite{A-R},  \cite{Be}, \cite{[vdPT]},   \cite{Schm}, \cite{[Z3]}.)  

 Abel, although without proof, realized that, in marked contrast with the case of integers,   not all complex polynomials are Pellian (even among the non-square  ones of even degree).\footnote{The case of polynomials over a finite field is, on the contrary, completely similar to the integer case.}    He and Chebyshev also understood that, this time  as in the case of integers, there is a strict relation  with the continued fraction; indeed,  in essence their contributions contained in particular 
  the following

\medskip

\noindent{\bf Abel-Chebyshev theorem.}   {\it The  complex polynomial  $D(t)$  (non-square of even degree) is Pellian  if and only if the continued fraction for $\sqrt{D(t)}$ is eventually periodic}. 

\medskip

These Pell equations and continued fractions have been studied since then in several papers. Beyond the above mentioned ones,  we  quote also Schinzel's \cite{Schi}, concerning   relations between  the continued fractions for $\sqrt{D(t)}$ and its values $\sqrt{D(n)}$ ($n\in\N$). 

\subsection{\tt Results of this paper} As a matter of fact, from many viewpoints  `pellianity'  is extremely rare for any given $d>1$: for instance,  it may be shown   that inside the $(2d-2)$-dimensional family of polynomials $D(t)$ of degree $2d$ suitably normalized, the Pellian ones  form a denumerable union of algebraic families of dimension $\le d-1$.\footnote{A formal proof of this is the object of work in progress, but some detail appears already in  \cite{[Z3]}, especially  \S 2.2. It shall anyway clearly appear later that pellianity  is indeed uncommon.} See also e.g. the joint paper with D. Masser \cite{MZ} for a proof that on `most' 1-dimensional families of polynomials of degree $2d\ge 6$  there are   only finitely many Pellian ones. (These facts fall into the realm of `Unlikely Intersections' and `relative Manin-Mumford', as in \cite{[Z]}; they are also related to Manin's theorem of the kernel, as in forthcoming papers  with Y. Andr\'e, P. Corvaja and Masser.) 

So, from these considerations and the Abel-Chebyshev theorem we deduce that in a sense periodicity of the continued fraction for $\sqrt{D(t)}$ is a very `rare' phenomenon as well.

Now, we have realized, not without surprise, that, however, some periodicity survives 
 in full generality; indeed, we have the following

 \begin{thm}\label{T.dega}  
 The sequence 
 of  degrees of the partial quotients for $\sqrt{D(t)}$  is eventually periodic.
\end{thm}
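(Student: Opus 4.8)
The plan is first to reduce the statement to the eventual periodicity of one set of integers, and then to extract that periodicity from the arithmetic of a (generalized) Jacobian by means of a Skolem--Mahler--Lech type input. Keep the convergents $(p_n(t),q_n(t))$ and set $e_n\seteq\deg q_n$, so that $0=e_1<e_2<\cdots$ and, for $n\ge 1$, $\deg a_n=e_{n+1}-e_n$. An infinite eventually periodic subset of $\N$ has eventually periodic gap sequence, so it suffices to prove that
\[
E\seteq\{e_n:n\ge 1\}=\{\deg q_n:n\ge 1\}\subseteq\N
\]
is eventually periodic. (Equivalently one may run everything through the complete quotients: the $n$th one has the shape $(\sqrt{D}+P_n)/Q_n$ with $Q_n\mid D-P_n^2$ and $0\le\deg Q_n\le d-1$, and $\deg a_n=d-\deg Q_n$, so the assertion is also equivalent to eventual periodicity of $(\deg Q_n)_n$.)

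\textbf{Step 2 (a divisor description of $E$).} Let $\mathcal H$ be the hyperelliptic curve $u^2=D(t)$ --- in general \emph{singular}, when $D$ is not squarefree --- with the two points $\infty_+,\infty_-$ over $t=\infty$ exchanged by the involution, let $J\seteq\mathrm{Pic}^0(\mathcal H)$ be its generalized Jacobian, and put $c\seteq[\infty_+-\infty_-]\in J$. For a convergent $(p_n,q_n)$ the function $p_n-q_nu$ on $\mathcal H$ has divisor $A_n+e_{n+1}\,\infty_+-(e_n+d)\,\infty_-$, where $A_n\ge 0$ is the finite divisor cut out by the pair $(q_n,\,p_n-q_nu)$; it has degree $\deg Q_n=d-\deg a_n\le d-1$, and, because $\gcd(p_n,q_n)=1$, it is \emph{reduced}. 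Conversely, using Legendre's approximation theorem over $\C(t)$ together with a direct analysis of the orders of vanishing of $p-qu$ at $\infty_\pm$ (note that if $p-qu$ vanishes at $\infty_+$ then at $\infty_-$ its two leading terms cannot cancel, so the pole order there is exactly $\deg q+d$), one obtains: for $m\ge 0$,
\[
m\in E\iff\text{there is }k\in\{0,\dots,d-1\}\text{ with }k\,[\infty_+]-(m+d)\,c\in\mathcal W_k,
\]
where $\mathcal W_k\subseteq\mathrm{Pic}^k(\mathcal H)$ is the $m$-independent set of classes of reduced effective divisors of degree $k$ (and then $k=\deg Q_n=d-\deg a_n$ for the corresponding index $n$). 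Each $\mathcal W_k$ is \emph{constructible}: it is the image of the open locus of reduced divisors in $\mathrm{Sym}^k(\mathcal H)$ under $A\mapsto[A]$, an injection on that locus since a reduced effective divisor of degree $\le g\seteq d-1$ has $h^0=1$. Establishing this equivalence rests on Riemann--Roch on $\mathcal H$; for non-squarefree $D$ it genuinely involves the generalized Jacobian (an extension of an abelian variety by a product of $\G_m$'s and $\G_a$'s) and the correct notion of reduced divisor in that setting.

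\textbf{Step 3 (Skolem--Mahler--Lech on $J$).} Fix $k$ and a reference class $b_k\in\mathrm{Pic}^k(\mathcal H)$. Then $m\mapsto k\,[\infty_+]-(m+d)\,c-b_k$ is, in $J$, the orbit of a point under translation by $-c$, so
\[
\{m\ge 0:k\,[\infty_+]-(m+d)\,c\in\mathcal W_k\}=\{m\ge 0:(k\,[\infty_+]-d\,c-b_k)-m\,c\in\mathcal W_k-b_k\}.
\]
By the version of the Skolem--Mahler--Lech theorem for commutative algebraic groups proved in this paper, applied to $J$, to the element $c$, and to the finitely many irreducible locally closed pieces of the constructible set $\mathcal W_k-b_k$ --- and using that eventually periodic subsets of $\N$ are closed under finite Boolean operations --- this set of $m$ is eventually periodic. (If $c$ is torsion, i.e.\ $D$ is Pellian, the orbit is finite and this is trivial, in agreement with the Abel--Chebyshev theorem.) Taking the union over the finitely many $k\in\{0,\dots,d-1\}$ exhibits $E$ as a finite union of eventually periodic sets, hence eventually periodic; Step 1 then concludes.

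\textbf{Main obstacle.} The conceptual skeleton above is forced by the tools, so the real work lies in Step 2 (and in supplying the Step 3 input). Making precise the dictionary ``an attained denominator degree $\leftrightarrow$ membership of an explicit arithmetic family of classes in the $\mathcal W_k$'' demands careful control of reduced divisors, of the vanishing orders of $p-qu$ at the two places at infinity, and of coprimality of $(p,q)$ --- which is exactly what ``reduced'' encodes --- and then, more seriously, of all of this on the possibly singular curve $u^2=D(t)$, where $J$ is a genuine generalized Jacobian: one needs Riemann--Roch and the loci $\mathcal W_k$ in that generality, and a Skolem--Mahler--Lech statement valid for such non-proper, non-abelian-variety commutative algebraic groups. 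By contrast Step 1 and the final passage ``each $\mathcal W_k$ contributes an eventually periodic set $\Rightarrow$ $E$ eventually periodic $\Rightarrow$ the degrees $\deg a_n$ are eventually periodic'' are elementary.
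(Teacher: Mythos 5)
Your overall framework is the same as the paper's: interpret the convergent relation as a divisor statement in a (generalized) Jacobian, recognize the relevant loci of divisor classes as constructible, and apply a Skolem--Mahler--Lech theorem for commutative algebraic groups to the translates by the class of $\infty_- - \infty_+$. Step 1 (reduce to eventual periodicity of $E=\{\deg q_n\}$) and Step 3 are correct, and for \emph{squarefree} $D$ your Step 2 equivalence does hold, once you strengthen ``reduced'' to require the divisor to also be disjoint from $\infty_\pm$: both directions then go through, injectivity of $A\mapsto[A]$ on reduced divisors of degree $\le d-1$ follows from Lemma \ref{L.unique}, and constructibility is Chevalley. In the squarefree case you therefore have essentially a cleaner packaging of the paper's argument, avoiding its full induction.

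The gap is Step 2 when $D$ is not squarefree, and you flag the difficulty without resolving it. Your claimed equivalence is in fact false there. If the convergent $(p_n,q_n)$ has $r:=\gcd(p_n,D_1)$ nonconstant (which does occur), then $\varphi_n=p_n-q_nu$ vanishes at singular points of $\mathcal H$, so the affine part of $\div(\varphi_n)$ is not a divisor supported on the smooth locus; and in fact \emph{no} reduced divisor of \emph{any} degree $k\le d-1$ can represent the class $k[\infty_+]-(m+d)c$: if $B$ were such a divisor, your backward construction would produce a function $g=a+bu$ regular on the affine part and non-vanishing at the singular points, hence a convergent to $\sqrt D$ with denominator degree $m$ and with $\gcd(a,D_1)=1$, contradicting the uniqueness of the convergent of denominator degree $m$. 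So $m\in E$ while your criterion does not detect it, and $E$ is not the union you claimed. The paper handles exactly this difficulty by a decreasing induction on the partial-quotient degree $\lambda$, carried out \emph{simultaneously for all divisors $D^*$ of $D$ with $D/D^*$ a square}: convergents with $\gcd(p_n,D_1)\neq 1$ are reinterpreted as convergents to the appropriate smaller $\sqrt{D^*}$ and accounted for by the inductive hypothesis, after which a $\mathcal W_k$-type criterion is applied on the corresponding generalized Jacobian to what remains. A single family of constructible sets $\mathcal W_k$ on one fixed $\mathcal H$ cannot see those denominator degrees, so without the induction over $D^*$ the argument does not close.
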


This analogue of Lagrange's theorem seems to have never been noted or suspected 
before,  
in spite of the fact  that  the most common case is by far when all degrees are eventually $1$ (or eventually constant), as  shall appear from considerations   below (see e.g. \S \ref{SSS.pause},  Example \ref{EX.g=0} and \S \ref{SSS.rem.ex}).  Indeed, for $d\le 3$ (or when $u^2=D(t)$ has genus $0$)  it may be seen that $\deg a_n$ is eventually constant  in the non-Pellian cases; however for $d\ge 4$ dimensional considerations suggest that this is not generally  the case and in fact explicit examples have been found in this sense.\footnote{For reasons of space, we omit  a discussion  of this here, which is somewhat laborious, depending on Jacobians of dimension $\ge 3$ containing a translate of an elliptic curve inside the set of sums of two points of the curve. To give a specific example, the polynomial $D(t)=t^8 -  t^7 - (3/4)t^6 + (7/2)t^5 - (21/4)t^4 + (7/2)t^3 - (3/4)t^2 -t+ 1
$ yields infinitely many partial quotients of degrees $1$ and $2$, with the periodic pattern of degrees $4, 1, 1, 2, 1, 1, 1, 1, 1, 1, 1, 1, 2, 1, 1, 1, 1, 1, 1, 1, 1, 2, 1, \ldots$. See O. Merkert's thesis \cite{Me} for more. We plan to publish  a detailed  presentation  in the future.} 

\smallskip

We stress that the quantities $\deg a_n(t)$ are relevant ones, e.g.  for the approximations to $\sqrt{D(t)}$. Indeed, using  the asymptotic symbols  
in the sense of the valuation of $\C((t^{-1}))$ (i.e. at $t=\infty$), we have 
 \begin{equation}\label{E.approx}
p_n(t)-q_n(t)\sqrt{D(t)}\sim c_n\cdot t^{-\deg q_n-\deg a_n}\quad c_n\neq 0.
\end{equation} 

We also recall at once that, somewhat conversely,  if $p(t)-q(t)\sqrt{D(t)}=O(t^{-\deg q-1})$ for polynomials $p,q\neq 0$, then $p/q$ is a convergent (see \cite{[vdPT]}).

 \begin{rem} \label{R.hankel}  (i)  {\tt Hankel determinants}.   The degrees of the $a_n$ are linked  to the so-called {\it Hankel  matrices} associated to the Laurent coefficients for $\sqrt{D(t)}$: a large degree amounts to the vanishing of several  determinants in these matrices. Our proofs show that these vanishings always  have periodic pattern and are related to the geometry of (generalized) Jacobians for the curves $u^2=D(t)$.  
 
 (ii) {\tt Roth's theorem for algebraic functions}. One may also wonder whether this periodic behavior holds  generally   for continued fraction expansions of algebraic functions\footnote{The methods of this paper should probably prove this for arbitrary elements of $\C(t,\sqrt{D(t)})$, though for simplicity we work only with the special emblematic case of $\sqrt{D(t)}$.}; such  issue  is related to a possible strong version of Roth's theorem over function fields, known only for algebraic functions of degree $\le 3$ over $\C(t)$ (see M. Ru's paper \cite{Min}).  
 
 \end{rem}


  Regarding again the degrees of the $a_n$,  it is well known (see e.g. \cite{[vdPT]})  that $1\le\deg a_n\le d$ for all $n$ and that  the upper bound is attained for some $n>0$ precisely when $D(t)$ is Pellian (in which case it is attained over a whole arithmetic progression of $n$). In the non-Pellian cases we shall improve on this,  by showing a best possible general upper bound:
 
 \begin{thm}\label{T.dega2} 
 We have $\deg a_n(t)\le {d\over 2}$ for all large $n$, unless  $D(t)=r(t)^2D^*(t)$  for polynomials $r,D^*$, with $D^*$ Pellian  of degree $>{3\over 2}d$.
 
 In particular, the bound holds for squarefree non-Pellian $D(t)$.
  \end{thm}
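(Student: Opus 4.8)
The plan is to prove the contrapositive of the essential first direction: if $\deg a_n(t)>d/2$ for infinitely many $n$, then $D=r^2D^*$ with $D^*$ Pellian of degree $>\tfrac32 d$. The starting point is the dictionary between convergents and divisors on the hyperelliptic curve. Write $C$ for the normalization of $u^2=D(t)$, let $\infty_+,\infty_-$ be its two points above $t=\infty$, and work in the generalized Jacobian $J$ of the (possibly singular) model $u^2=D(t)$, whose abelian quotient is ${\rm Jac}(C)$ and in which $D$ is Pellian precisely when the class $W:=[\infty_+]-[\infty_-]$ is torsion. By \eqref{E.approx}, the function $p_n+q_nu$ on $C$ has a zero of order $\deg q_n+\deg a_n=\deg q_{n+1}$ at one point at infinity and a pole of order $\deg p_n=\deg q_n+d$ at the other (using that $p_n-q_n\sqrt D$ vanishes at $\infty$); hence its affine zero divisor $A_n$ is effective of degree $d-\deg a_n$, and in $J$ one gets a relation $[A_n]=N_nW+(\text{a fixed divisor of degree }d-\deg a_n)$ with $N_n:=\deg q_{n+1}$ (passing to $J$ rather than ${\rm Jac}(C)$ additionally records the leading coefficients of $p_n,q_n$, which will be needed to pin down the square part of $D$). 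Thus $\deg a_n>d/2$ forces $N_nW$ to lie in $W_k+\xi_k$ for some $k=d-\deg a_n<d/2$, where $W_k\subseteq J$ is the image of the $k$-th symmetric power of $C$ under an Abel map and $\xi_k$ is a fixed translation.

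Next I would invoke the Skolem--Mahler--Lech theorem for commutative algebraic groups proved in this paper. We may assume $D$ is non-Pellian (if $D$ is Pellian, take $r=1$ and $D^*=D$, of degree $2d>\tfrac32 d$, so the exceptional clause already holds), so $W$ has infinite order. If $\deg a_n>d/2$ for infinitely many $n$, then, since the $N_n$ are strictly increasing, the set $\{N\in\Z:NW\in X\}$ is infinite, where $X:=\bigcup_{k<d/2}(W_k+\xi_k)$ is a proper closed subvariety of $J$ (for $d$ small, i.e. the genus-zero situations, the eventual constancy of $\deg a_n$ recorded in the introduction handles the statement directly). By the cited theorem this set is a finite union of arithmetic progressions together with a finite set, so it contains an infinite arithmetic progression $a+b\N$; consequently the coset $aW+\overline{\langle bW\rangle}$ is contained in $X$. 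Since $W$ is non-torsion, $H:=\overline{\langle bW\rangle}$ is a positive-dimensional algebraic subgroup of $J$, connected after replacing $b$ by a multiple; being irreducible, the translate $aW+H$ lies inside a single irreducible component of $X$, that is, inside a translate of some $W_k$ with $k<d/2$.

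Finally I would extract the structural conclusion from the geometry of the $W_k$. Projecting to ${\rm Jac}(C)$, a translate of the abelian subvariety $B\subseteq {\rm Jac}(C)$ (the image of $H$) is contained in $\overline{W}_k$ for some $k<d/2$ --- unless $B=0$, in which case $H$ lies in the linear part of $J$, a degenerate possibility to be analyzed directly from the singularities of $u^2=D(t)$. By the classification of abelian subvarieties occurring inside the Brill--Noether loci $W_k$ of a hyperelliptic Jacobian --- which forces $C$ to cover a curve of smaller genus and hence $D$ to factor --- together with the numerical constraint $k<d/2$, one obtains $D(t)=r(t)^2D^*(t)$ for polynomials $r,D^*$ such that the class $[\infty_+]-[\infty_-]$ on the curve attached to $D^*$ is torsion, i.e. $D^*$ is Pellian, and moreover $\deg D^*>\tfrac32 d$ (a smaller $D^*$ leaves the relevant $W_k$ too small to contain a positive-dimensional subgroup translate in the required range). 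Reversing the implications yields the bound $\deg a_n\le d/2$ for all large $n$ whenever $D$ is not of this special form; and since a squarefree $D$ can satisfy $D=r^2D^*$ only with $r$ constant --- which would make $D$ itself Pellian, excluded --- the bound holds in particular for squarefree non-Pellian $D$.

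I expect the last step to be the main obstacle: distilling the exact shape $r^2D^*$ with $D^*$ Pellian, and in particular the sharp threshold $\tfrac32 d$, out of the bare existence of a positive-dimensional abelian subvariety inside a low $W_k$; this is precisely where the ``halving'' of the classical bound $\deg a_n\le d$ comes from, and it rests on a careful study of translates of abelian subvarieties in hyperelliptic Jacobians combined with the bookkeeping, in the generalized Jacobian $J$, of the square part of $D$. Sharpness of the exceptional clause should then follow by running the dictionary backwards: when $D=r^2D^*$ with $D^*$ Pellian of degree $>\tfrac32 d$, the periodic continued fraction of $\sqrt{D^*}$, via the explicit Pell relation, produces infinitely many convergents of $\sqrt D=r\sqrt{D^*}$ whose affine divisors are forced (by the Weierstrass gap structure at the two points at infinity) to have degree $<d/2$, so that $\deg a_n>d/2$ infinitely often; thus neither the exceptional clause nor the value $\tfrac32 d$ can be removed.
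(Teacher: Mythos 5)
Your plan gets the global strategy right up through the Skolem--Mahler--Lech step: assuming $\deg a_n>d/2$ infinitely often, you do obtain (after passing to a subprogression) a translate of a positive-dimensional connected algebraic subgroup inside (the closure of) some $W_k$ with $k<d/2$. But the rest is not a proof, and the gap is exactly where you flag it. There is no result you can cite that converts ``a translate of a positive-dimensional subgroup sits in $\overline{W_k(\tt m)}$, $k<d/2$'' into the precise conclusion ``$D=r^2D^*$ with $D^*$ Pellian of degree $>\tfrac32 d$.'' The Brill--Noether/Abramovich--Harris type statements you allude to, even if applicable, would at best say something about $C$ covering a lower-genus curve; they do not identify a monic divisor $r\mid D_1$, do not produce the {\em Pellianity} of $D^*$ (which is a torsion condition in a {\em generalized} Jacobian $G(\tt m^*)$, and in the crucial non-squarefree case lives in the linear part you dismiss as ``degenerate''), and do not yield the sharp threshold $\tfrac32 d$. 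You also treat $X=\bigcup_{k<d/2}(W_k+\xi_k)$ as a closed subvariety, but $W_k(\tt m)$ is only constructible when $\tt m\neq 0$ (the paper handles this by taking closures and pruning carefully).

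The paper's route is quite different and much more elementary, and it is worth seeing why. It first isolates the right $r$ concretely as $r=\gcd(p_n,D_1)$, taken of maximal degree among those occurring infinitely often; maximality is then used to guarantee $\gcd(p^*,D_1^*)=1$ eventually, so that the divisor $\sigma^*_m$ (the affine zero divisor of $\varphi^*_m$) is genuinely coprime to $\tt m^*$. It then takes the {\em second difference} $\sigma^*_{m-1}+\sigma^*_{m+1}-2\sigma^*_m$, which is strongly equivalent to $0$ and hence the divisor of a function $f_m$; the key arithmetic input is Lemma~\ref{L.unique}, which (because $\deg f_m\le 2(d^*-l-r^*)<d^*$, using $l>d/2$) forces $f_m\in\C(t)$. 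A short combinatorial analysis of which ramified points can appear in the $\sigma^*_m$ shows $\div(f_m)=\div(f_{m+2})$, so the $\sigma^*_m$ satisfy a linear recurrence with roots $\pm1$; effectivity and bounded degree then force $\sigma^*_{2m}$ to be eventually constant, whence $2\Pi\delta\approx0$ in $G(\tt m^*)$, i.e.\ $D^*$ is Pellian. Finally, $\deg D^*>\tfrac32 d$ drops out immediately from the effectivity constraint $d^*\ge l+r^*$ combined with $l>d/2$ and $r^*=d-d^*$; there is no appeal to the size of $W_k$ at all. So the halving of the classical bound comes from Lemma~\ref{L.unique} applied to a second difference, not from any classification of abelian subvarieties in Brill--Noether loci. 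As it stands, your step 5 is the whole theorem and remains unproved.
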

 
 \begin{small}
 
 \begin{rem} \label{R.deg<}  We   cannot avoid the exceptions in the statement: if $D^*$ is Pellian  an infinity 
 of convergents $(p,q)$ to $\sqrt{D^*}$  have partial quotient of degree $d^*=\deg D^*/2$; but then $rp/q$ is a convergent to $\sqrt D$ with partial quotient of degree $\ge d^*-\deg r=d^*-(d-d^*)=2d^*-d>d/2$. 
 
 Further,  
 although `usually' we have $\deg a_n=1$ for all  large $n$,  
 the above bound cannot be generally improved,  even in the squarefree  non-Pellian case. To justify this claim,   let $D(t)=t^{4b}+t^b+\lambda$,   where $\lambda\in\C$ is transcendental and $b$ is a positive integer. Let $u_n(t),v_n(t)$ be the convergents to $\sqrt{t^4+t+\lambda}$, so in particular 
 $
 u_n(t)-v_n(t)\sqrt{t^4+t+\lambda}=O(t^{-\deg v_n-1}),
 $
 which yields 
$
 u_n(t^b)-v_n(t^b))\sqrt{D(t)}=O(t^{-\deg v_n(t^b) -b}).
 $
 By the asymptotic  \eqref{E.approx} above and the subsequent remark, we deduce that $u_n(t^b)/v_n(t^b)$ are convergents to $\sqrt{D(t)}$ whose corresponding partial quotients  have degree $\ge b=(\deg D)/4=d/2$. 
On the other hand, $D(t)$ is squarefree and cannot be Pellian, as can be easily proved e.g. with the argument appearing in \cite{[Z]}, Remark. 3.4.2, p. 85.





 \end{rem} 
 \end{small}
 
 \noindent{\tt Heights}. When $D$ has algebraic coefficients, still another aspect concerns heights, which are relevant for many purposes.\footnote{Just to mention an instance, it will appear that continued fractions may be used to check computationally whether a point is torsion on a hyperelliptic Jacobian, and here heights affect the complexity.}
 
To fix  the basic definitions, we recall that for a nonzero polynomial $f(t)\in\overline \Q[t]$ one  considers the usual  projective absolute (logarithmic) height of the vector of its coefficient; this is denoted $h(f)$.  One can also consider the affine height of the same vector, denoted here $h_a(f)$.  We have $h_a(f)\ge h(f)\ge 0$.
 
 For the convergents $p_n,q_n$, a theorem of Bombieri-Cohen \cite{Bo-Co}, on which we shall comment below in more detail, predicts  the order of growth of the projective height. However this does not yield  the same information on the height of the partial quotients, especially concerning bounds from below.  We have the following result, where for the lower bound we stick to the affine height and, for simplicity,  to the squarefree case:
 
 \begin{thm} \label{T.h}  Suppose that $D(t)\in\overline\Q[t]$ is squarefree and non-Pellian. Then   $h(a_n)\ll n^2$. Also,  there exists an integer $M=M_D$ such that for all large $n$ we have
 $$
 \max_{s=0}^M h_a(a_{n-s})\gg  n^2.
  $$
 \end{thm}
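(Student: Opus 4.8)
The plan is to bound the heights of the partial quotients by controlling the heights of the convergents $p_n,q_n$ and then relating them through the recursion that expresses the $a_n$ as ratios of convergents. For the upper bound $h(a_n)\ll n^2$, I would invoke the Bombieri--Cohen type estimates for the projective height of the convergents, which give $h(p_n)+h(q_n)\ll n$ (since $\deg q_n\ll n$ and the convergents of a Pad\'e approximation to an algebraic power series have height growing linearly in their degree, with the implied constant depending on $D$ and on $h(D)$). From the identity $p_{n+1}=a_{n+1}p_n+p_{n-1}$ (and similarly for $q$) one recovers $a_{n+1}$ as a polynomial quotient, and a Gauss-lemma / resultant estimate controls $h_a(a_{n+1})$ in terms of $h_a(p_{n+1}),h_a(p_n)$ together with the denominators cleared along the Euclidean step. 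Since clearing denominators over $n$ steps can introduce an extra factor growing linearly in $n$, and each $h_a$ is itself $\ll n$, one obtains $h(a_n)\ll n^2$. The squarefree non-Pellian hypothesis is what guarantees $\deg q_n\sim cn$ with $c>0$ (via Theorem~\ref{T.dega} and the periodicity of degrees, ensuring the degrees of the $a_n$ are bounded and bounded away from forcing linear growth to stall), so that $n$ is genuinely the right scale.

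For the lower bound the strategy is essentially a pigeonhole/telescoping argument. The key point is that the product (or rather the continuant built from) $a_0,a_1,\dots,a_{n-1}$ reconstructs $q_n$, and $q_n$ has affine height growing at least linearly in $n$ — in fact one expects $h_a(q_n)\gg n$, because the leading and trailing coefficients of $q_n$, being expressible through the Pad\'e/Hankel data attached to $\sqrt D$, cannot all be small: a theorem on the heights of coefficients of Pad\'e approximants (again Bombieri--Cohen, lower bound side) gives $h_a(q_n)\gg n$ whenever $D$ is not Pellian. Now $q_n$ is, up to the bounded-degree correction coming from the continuant formula, a product of $\sim n/M$ of the blocks $a_{n-1},\dots,a_{n-s}$ for a suitable window size $M=M_D$ (the period length of the degree sequence, which also bounds how many consecutive $a$'s one needs to see a ``full'' cycle). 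By subadditivity of the height under multiplication, $h_a(q_n)\le \sum_{s} h_a(a_s)+O(n)$, so if every $h_a(a_{n-s})$ for $s=0,\dots,M$ were $\ll n^{2-\epsilon}$ we could not telescope up to $h_a(q_n)\gg n$ — wait, that only gives linear lower bound. To upgrade to $n^2$ one iterates: since there are $\sim n$ partial quotients and $h_a(q_n)\gg n$ only gives an \emph{average} of $h_a(a_s)$ of order $O(1)$, the real input must be that $h_a(q_n)\gg n^2$ rather than $\gg n$. This stronger lower bound is what the proof of Theorem~\ref{T.h} must establish, and it comes not from $q_n$ alone but from the \emph{accumulation} of arithmetic complexity in the Euclidean remainders: the remainder $p_n-q_n\sqrt D$ has, after clearing denominators, a content whose height grows linearly at each step, so the $n$-th remainder carries height $\gg n^2$, and this height must be visible in at least one of the last $M$ partial quotients (otherwise the recursion would propagate only $O(n)$-height data). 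Making ``visible in at least one of the last $M$'' precise is exactly where the window $M=M_D$ enters: by the periodicity of degrees one knows the combinatorial shape of the last block of $a$'s, and a determinant (resultant) computation shows the block cannot have all its entries of height $o(n^2)$ while still producing a continuant of height $\gg n^2$.

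The main obstacle, and the part requiring genuine work rather than bookkeeping, is the \emph{lower} bound — specifically, proving $h_a(a_{n-s})\gg n^2$ for at least one $s\le M$, as opposed to the easy linear bound. The difficulty is that heights can conspire to cancel: a priori a high-height $q_n$ could be a product of two polynomials each of only linear height, or a single factor's large height could be ``diluted'' across many convergents. Ruling this out requires a quantitative irrationality/non-Pellianity input — essentially that the denominators accumulated in the continued fraction algorithm for $\sqrt D$ do not collapse — which I expect to extract from the arithmetic of the Hankel determinants attached to the Laurent expansion (Remark~\ref{R.hankel}(i)): each new partial quotient corresponds to a new non-vanishing Hankel minor, whose height, being a determinant of coefficients of $\sqrt D$ of size growing with $n$, is forced to grow, and the product structure over $n$ minors gives the $n^2$. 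Quantifying the height of these determinants from below — ensuring they are not unexpectedly small, i.e. a ``no-small-determinant'' or Liouville-type estimate for the Hankel forms of $\sqrt D$ — is the crux, and is presumably where the non-Pellian hypothesis is used in an essential, effective way (a Pellian $D$ would make the relevant Hankel minors eventually vanish or stay bounded). The squarefree assumption is then a convenience that keeps the associated hyperelliptic curve smooth so that the Jacobian-theoretic interpretation of the Hankel vanishing is clean.
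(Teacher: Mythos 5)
Your proposal contains two genuine gaps, one in each direction of the inequality.

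\textbf{Upper bound.} You begin from the claim $h(p_n)+h(q_n)\ll n$, asserting that convergents of a Pad\'e expansion have height linear in their degree. This is false in the non-Pellian case: as the paper recalls in Remark~\ref{R.BoCo} (following Bombieri--Cohen), one has $h(q_n)\asymp n^2$ when the squarefree part of $D$ is non-Pellian; the linear growth $h(q_n)\ll n$ occurs only in the Pellian case. Your subsequent bookkeeping — ``clearing denominators over $n$ steps introduces an extra factor growing linearly'' — then produces the right order $n^2$ only because two errors happen to compensate, not because the argument is sound. The paper's route is different and does not go through $h(q_n)$ at all: using \eqref{E.J} and \eqref{E.a_n}, the coefficients of $c_na_n$ are values of \emph{fixed} rational functions on $J$ evaluated at the point $z_n=(\deg p_n)\delta$; since $\hat h(z_n)\asymp n^2$, functoriality of heights gives $h(a_n)\ll n^2$ directly.

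\textbf{Lower bound.} You candidly note that the telescoping of $h_a(q_n)\gg n$ ``only gives linear lower bound,'' and then appeal to an unspecified ``no-small-determinant'' or Liouville-type estimate for Hankel forms to upgrade to $n^2$. No such quantitative Hankel estimate is established (nor, to my knowledge, available), and the paper does not use Hankel determinants in the proof (they appear only in Remark~\ref{R.hankel} as an interpretation). The actual mechanism is entirely Jacobian-theoretic: the coefficients of $c_na_n$ (and the auxiliary quantities $R_n$, $S_n$) are interpreted as values $f(z_n)$ of rational functions $f$ on the canonical abelian subvariety $\Delta_0\subset J$ at the points $z_n=(\deg p_n)\delta$, where $\delta$ is non-torsion. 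The key input is Lemma~\ref{L.h}: if $f$ is nonconstant on an abelian variety $\Delta$ and $\delta$ has Zariski-dense multiples, then $1+\max_{0\le h\le m} h\bigl(f((n+h)\delta)\bigr)\gg n^2$. This in turn rests on an algebraic-independence lemma for the translates $f(x),f(x+\alpha),\dots,f(x+(r-1)\alpha)$ on a simple abelian variety (proved via invariant derivations), combined with the quadratic growth $\hat h(n\delta)=n^2\hat h(\delta)$. The non-Pellian hypothesis enters precisely as $\hat h(\delta)>0$. The case where the relevant rational functions \emph{are} constant on $\Delta_0$ requires a second step exploiting the leading coefficients $c_n$ through the identities $a_nR_n=S_n-S_{n-1}$ and $S_n^2=D+R_nR_{n+1}$ — none of which appears in your outline. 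Finally, your window $M$ is attributed to ``periodicity of degrees''; in the paper it comes both from the period $\Pi$ and from the dimension $r$ of a simple factor of $\Delta_0$ plus the parameter $b$ from Lemma~\ref{L.h} needed to dodge the exceptional locus of the rational map, so the source of $M$ is also misidentified.

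In short, the proposal lacks the core idea — reading the arithmetic of the $a_n$ through rational functions on the Jacobian evaluated at multiples of $\delta$ and exploiting the quadratic growth of the canonical height — and the parts that are present either start from an incorrect Bombieri--Cohen bound or are placeholders for arguments you do not have.
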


 
 \begin{rem}\label{R.12}{\tt Peculiar (sub)sequences of $a_n$}. (i) The same kind of  lower bound of the theorem may be gotten restricting to the subsequence of $a_m$ when $m$ lies in a fixed arithmetical progression (we have stated the special case for simplicity). 
 
 (ii) Of course in the Pellian case the $a_n$ are periodic hence of bounded height. We have also found (with the help of numerical calculations by Merkert) some unexpected cases of non-Pellian $D(t)$ such that all the $a_n(t)$  with $n$ in certain arithmetical progressions are of the shape $c_n\cdot t$,  hence in particular   have bounded ($=0$) projective height.\footnote{See  \cite{Schm} for a notion of pseudo-periodicity, apparently similar to this, but in fact different..}  A relevant example has degree $12$ (and is defined over  a number  field of degree $5$)  \footnote{The sequence $(\deg a_n)$  in this case is $[6, 1, 1, 1, 1, 1, 1, 3, 1, 1, 1, 1, 1, 1, 1, 2, 2, 1, 1, 1, 1, 1, 1, 1, 3, 1, \ldots ]$.}; this  corresponds to a rather peculiar   Jacobian   of a curve of genus $5$, and we think  it would be not free of  interest  to explore in general  the nature of this kind of geometry. (For brevity we do not reproduce here the details of this example.)
 
 \end{rem} 
 
 \medskip
 
 In Example \ref{EX.ell} we shall sketch a proof that  in some cases (e.g.  $D(t)=t^4+t^2+t$) we have the striking fact that the {\it affine} height grows even faster:
 
 \medskip
 
\noindent {\bf Addendum}.  {\it For the partial quotients $a_n$ of $\sqrt{t^4+t^2+t}$, for any integer $k>0$, we have $h_a(a_n)+h_a(a_{n-k})\ge ckn^2$, for some absolute constant $c>0$ and all large enough $n$. }

\medskip

 This  implies a similar lower bound for $h_a(p_n),h_a(q_n)$; in particular, for the {\it affine} height this yields  (for this example) $\limsup h_a(a_n)/n^2=\infty$, contrary to the bound    $h(a_n)\ll n^2$  for the {\it projective} height. (Maybe $h_a(a_n)\gg n^3$ at least on a subsequence, but we have not much evidence for this.) 
 
 Several other comments are in order, but we postpone them  and further precision after the proof:  see Remark \ref{R.h} and Example \ref{EX.ell}. 
 
  \medskip

\noindent{\tt Convergents and their poles}.  So far we have discussed partial quotients, and let us now turn to the convergents. In view of the well-known recurrences $q_{n+1}=a_nq_n+q_{n-1}$, so $\deg q_{n+1}=\deg a_n+\deg q_{n}$, Theorem  \ref{T.dega} also clearly  implies a formula 
 \begin{equation}\label{E.degq}
 \deg q_n=c\cdot n+r_n,
 \end{equation}
  for some rational $c>0$, with $r_n\in\Q$ eventually periodic (and similarly for the $p_n$, note    that  in fact $\deg p_n=\deg q_n+d$). Theorem \ref{T.dega2} also yields a lower bound for $c$.
  
  This is for what concerns degrees, but now we shall  be interested in  the poles of the convergents $p_n/q_n$, i.e. the zeros of the convergent denominators $q_n$, which of course can be considered analogues of their prime factors in the numerical case. We shall study heights and the occurrences  of a given zero.

 \medskip
 
 \begin{small} 
  Let us first briefly discuss  the Pellian case, when the continued fraction is periodic by Abel-Chebyshev  theorem. As is well known, the periodicity entails  that  
  if $b$ is the period   we have
  \begin{equation}\label{E.pellconv}
  q_n(t)={\beta_r\mu^m-\beta_r'\mu^{-m}\over 2\sqrt{D(t)}},\qquad n=mb+r, \quad m\in\N,
  \end{equation}
  for suitable $\beta_r\in \C[t,\sqrt{D(t)}]$, where a dash denotes conjugation over $\C(t)$ and where $\mu=p(t)+q(t)\sqrt{D(t)}$ corresponds to the minimal solution $(p,q)$ of the Pell equation (so in particular we have $\mu'=\mu^{-1}$). Also,   we have $\beta_0=\beta'_0=1$. 
  
  This formula of course makes it relatively easy to extract  properties of the zeros, for instance concerning their location and also their arithmetic. 
   In fact, for a zero $\theta$ one has $\mu(\xi)^{2m}=\beta_r'(\xi)/\beta_r(\xi)$, where $\xi$ is a point of the curve $u^2=D(t)$ above $t=\theta$. 
  
  If for instance we work over $\overline\Q$, this easily entails that the zeros have bounded (logarithmic Weil) height, as  also  suggested by the bound $h(q_n)=O(n)=O(\deg q_n)$ coming from \eqref{E.pellconv}. 
  
  Also,  \eqref{E.pellconv}  yields  that if a given $\theta$ is a zero of infinitely many among the $q_n$ 
  then $\mu(\xi)$ is a root of unity. Actually, for $r=0$ we see that anyway $\mu(\xi)$ is a root of unity and that the zero is common to all $q_{bn}$, for $bn$ multiple of the order of the root of unity.\footnote{In fact, using known results on torsion points on curves, one can easily show that for $2r\not\equiv 0\pmod b$ a zero can appear only finitely many times.}  In particular, the zeros common to sufficiently many $q_n$ are linked to cyclotomic fields, there are infinitely many of them but  only finitely many ones  of bounded degree. 
  \end{small} 
  
    \medskip
  
  In the non-Pellian case we have no  simple  formula to help us, but still we may say something on these issues.

  Concerning the height of the zeros,  as mentioned above, a (special case of a) theorem by Bombieri-Cohen (see \cite{Bo-Co})  says that,  in marked  contrast with the Pellian case,   if the squarefree part of $D(t)$ is already non-Pellian  
   the  height of the $q_n$   grows quadratically: $h(q_n)\gg n^2$. This is of course linked with Theorem \ref{T.h} above, and for our special context we shall reprove in a simple way this fact later (see Remark \ref{R.BoCo});  now we observe at once that, since $\deg q_n\ll n$, this yields by general properties  (see \cite{[BG]}, Ch. 1)  that the average zero has large height:
  \begin{equation*}\label{E.h}
  {1\over \deg q_n}\sum_{q_n(\theta)=0}\ord_\theta(q_n)\cdot h(\theta)\gg n,
  \end{equation*}
  so that in particular the boundedness of the height of the zeros now badly  fails.  This also makes it difficult to study the location of zeros \footnote{This is relevant e.g. in specialising functional approximations.}, for which   deep problems of Diophantine Approximation on abelian varieties arise, on which we shall comment later. 
  

\smallskip
  
  Concerning the appearance of zeros, 
  we may   prove that some of the properties that we have observed for the Pellian case persist for the non-Pellian one; this    is  much more hidden  and is indispensable for certain applications, as mentioned below.
  
  
  We consider  the zeros appearing infinitely often (analogous to the primes dividing infinitely many $q_n$ in the numerical case).  
  By the methods developed in this paper  
  for instance we can show the following:
  
  \begin{thm} \label{T.zeros} Let $D\in \kappa[t]$, where  $\kappa$ is a number field. Then, for each $l$ there are only   finitely many  $\theta$ of degree $\le l$ over $\kappa$ 
  which are common  zeros of infinitely many   $q_n(t)$.
  \end{thm}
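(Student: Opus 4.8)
The plan is to convert the statement about zeros of convergent denominators into a statement about torsion (or near-torsion) behaviour in a generalized Jacobian, and then to invoke the Skolem--Mahler--Lech-type theorem for algebraic groups announced in the abstract. First I would recall the standard link between the convergents $(p_n,q_n)$ of $\sqrt{D(t)}$ and divisor relations on the hyperelliptic curve $\mathcal{C}\colon u^2=D(t)$: the Padé/best-approximation property \eqref{E.approx} says that $p_n-q_n\sqrt{D}$ vanishes at one point $\infty_+$ above $t=\infty$ to the maximal order, so the divisor of $p_n-q_n\sqrt D$ on $\mathcal C$ is supported on $\infty_+,\infty_-$ and on the points lying above the zeros of $q_n$, with the zeros of $q_n$ exactly parametrising the finite part of this divisor. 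Writing the principal divisor of $p_n-q_n\sqrt D$ in the form $m_n(\infty_+-\infty_-) - E_n$ where $E_n\ge 0$ is an effective divisor of bounded degree $2\deg q_n - m_n + \deg(\text{stuff})$ living over the zeros of $q_n$, one obtains that the class of $m_n(\infty_+-\infty_-)$ in $\mathrm{Pic}(\mathcal C)$, and more precisely in a suitable generalized Jacobian (with modulus supported at the $\theta$'s of interest and at $\infty$), is represented by a point moving along the cyclic subgroup generated by the class of $\infty_+-\infty_-$. Because $D$ is non-Pellian, this class is of infinite order.

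Next, fix a candidate $\theta$ (of degree $\le l$ over $\kappa$) and suppose it is a common zero of $q_n$ for $n$ in an infinite set $S$. Let $\xi_+,\xi_-$ be the points of $\mathcal C$ above $t=\theta$. The condition $q_n(\theta)=0$ forces the divisor $E_n$ to contain $\xi_+$ or $\xi_-$ (with multiplicity), i.e. it imposes a congruence/incidence condition on the point $n\cdot[\infty_+-\infty_-]$ in the generalized Jacobian $J_\theta$ obtained from $\mathrm{Jac}(\mathcal C)$ by the modulus $\mathfrak m = \xi_+ + \xi_- + \infty_+ + \infty_-$ (or a related one). Concretely, the constraint says that the image of $n[\infty_+-\infty_-]$ in the torus part $T_\theta\cong\mathbb G_m$ of $J_\theta\to\mathrm{Jac}(\mathcal C)$ sitting over $\theta$ — which is essentially the value $\mu_n(\xi_+)/\mu_n(\xi_-)$ of the relevant theta-like function — takes a prescribed form, in fact lies in a fixed coset of a subtorus, or vanishes in an appropriate coordinate. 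This is exactly the setup to which the algebraic-group Skolem--Mahler--Lech theorem applies: one has a finitely generated sequence $g_n = n\cdot g$ in a commutative algebraic group $J_\theta$ over $\overline{\mathbb Q}$ (or over $\kappa$), and a fixed subvariety (the locus forcing $q_n(\theta)=0$), and the theorem says the return set $\{n: g_n \in V\}$ is a finite union of arithmetic progressions; an infinite such set therefore contains a full progression $n\equiv a\pmod b$. On that progression the torus coordinate of $g_n$ is forced to be constant, which pins down $\mu_b(\xi_+)/\mu_b(\xi_-)$ as a root of unity — i.e. the image of $b\,g$ in $T_\theta$ is torsion — so that $\xi_+-\xi_-$, pushed into the generalized Jacobian, generates together with $\infty_+-\infty_-$ a subgroup whose image in $J_\theta$ has a torsion component tied to $\theta$.

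The finiteness then follows from a height/Northcott argument together with a bound on torsion. Running the above for all candidate $\theta$ with $[\kappa(\theta):\kappa]\le l$: each such $\theta$ that appears infinitely often produces, on some progression, a relation of the form ``$n[\infty_+-\infty_-]$ lies in a translate of a positive-dimensional subgroup through $\xi_+-\xi_-$,'' equivalently a nontrivial torsion (or unlikely-intersection) relation between the fixed class $[\infty_+-\infty_-]$ and the moving point $[\xi_+-\xi_-]_\theta$ in the generalized Jacobian with modulus at $\theta$. Since $[\infty_+-\infty_-]$ has infinite order, such a relation is a genuine constraint on $\theta$; by standard results on torsion points on curves (Raynaud/Manin--Mumford, or more elementary height bounds in the $\mathbb G_m$-fibre since the relevant function is explicit), the set of $\xi_\pm$ on $\mathcal C$ satisfying it, of bounded degree, is finite, whence only finitely many $\theta$ of degree $\le l$ can occur. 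I expect the main obstacle to be the careful bookkeeping in the middle step: identifying precisely the generalized Jacobian and the fixed subvariety $V$ so that ``$q_n(\theta)=0$'' is cleanly equivalent to ``$n\,g\in V$'', and in particular handling the ramified case $\xi_+=\xi_-$ (i.e. $D(\theta)=0$) and the dependence of the modulus on $\theta$ uniformly, so that a single application of the algebraic-group Skolem--Mahler--Lech theorem suffices and the final Northcott argument goes through with constants depending only on $D$ and $l$.
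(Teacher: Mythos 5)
Your high-level picture matches the paper's: reduce $q_n(\theta)=0$ to a divisor relation in a generalized Jacobian built with a modulus supported over $t=\theta$, apply the algebraic-group Skolem--Mahler--Lech theorem to the multiples of $\delta=[\infty_-]-[\infty_+]$, and squeeze out a torsion/cyclotomic constraint on $\theta$. But there is a genuine gap at the exact hinge of the argument, the step where you write ``On that progression the torus coordinate of $g_n$ is forced to be constant, which pins down $\mu_b(\xi_+)/\mu_b(\xi_-)$ as a root of unity.'' The Skolem--Mahler--Lech corollary only gives you that the Zariski closure of $\{n\delta: n\in\text{progression}\}$ in the larger group $\cG$ (the $\G_{\rm m}$-extension of $G$ attached to the modulus at $\theta$) is a coset of the connected subgroup $\Delta_0(\cM)$. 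It is perfectly possible a priori that $\Delta_0(\cM)$ contains the full $\G_{\rm m}$-kernel of $\cG\to G$, in which case the ``torus coordinate'' is Zariski-dense in $\G_{\rm m}$ and no root-of-unity conclusion follows. What is needed is exactly the paper's Lemma \ref{L.dim}: $\dim\Delta_0(\cM)=\dim\Delta_0({\tt m})$, i.e.\ the constraint on infinitely many $n$ forces the $\G_{\rm m}$-extension over $\Delta_0({\tt m})$ to be isogenous to a split one. The paper proves this by combining the constructibility of $W_{d-l}(\cM)$ with Lemma \ref{L.unique} (the degree bound that forces certain functions to lie in $\C(t)$), and it is the real content behind your asserted sentence; without it the subsequent cyclotomic bound on $h$ (or your root-of-unity claim) is unsupported.

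There is also a second, more minor, divergence in the finiteness endgame. You propose to conclude via Raynaud/Manin--Mumford or height bounds once one knows $[\xi_+-\xi_-]$ satisfies a torsion relation. That route is essentially the paper's Remark \ref{R.ber}(ii), which they explicitly set aside because one then needs a criterion for an extension to be isogenous to a split one over an abelian subvariety and its analogue for generalized Jacobians (not just Jacobians), which would require its own development. The paper's actual finiteness argument is self-contained and quite different: it bounds the order $h$ of the root of unity $\psi(\mu\delta)$ by $[\kappa_2:\Q]$ (which is controlled by the degree of $\theta$), then supposes $K$ many $\theta$'s of bounded degree are common zeros, shows that the same strong equivalence $\nu\delta\approx\sigma$ would have to hold simultaneously relative to the moduli at all $K$ values, and derives a contradiction from Lemma \ref{L.unique} for $K>\nu+d$ because the witnessing function $a(t)+b(t)u$ would have $b$ divisible by $\prod(t-\theta)$ over all $K$ values. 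So even if one granted the missing Lemma \ref{L.dim} step, the Manin--Mumford-based conclusion you sketch is not what the paper does and would need to be worked out separately in the generalized-Jacobian setting to cover non-squarefree $D$.

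Finally, two small points worth flagging: the modulus you write, $\mathfrak m=\xi_++\xi_-+\infty_++\infty_-$, should not contain $\infty_\pm$ (in the paper the modulus is supported at $\xi_\pm$ together with the singular points coming from $D_1^2$), and the effective divisor appearing in $\div(p_n-q_n\sqrt D)$ lies over the zeros of the norm $R_n=\varphi_n\varphi_n'$, not over the zeros of $q_n$ themselves; the link to $q_n(\theta)=0$ goes through rewriting $q_n=(t-\theta)\hat q_n$ and viewing $(p_n,\hat q_n)$ as a convergent to $(t-\theta)\sqrt{D}$, which is how the extra $\G_{\rm m}$ factor enters.
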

  
  Actually, it shall appear from the proofs that we can add further precision (for instance proving sometimes 
  finiteness independently of $l$), on which we shall comment later (see Example \ref{EX.g=2} and Remark \ref{R.ber}(ii)). 
  Also, as remarked therein, dimensional considerations suggest that  even in the non-Pellian case there may exist  zeros which appear infinitely often (and this is related to the geometry of generalized hyperelliptic Jacobians).  
  
  \medskip
  
 A   relevant further motivation for studying these common zeros  is to relate  the continued fraction 
  for   $\sqrt{D(t)}$ with the one for $(t-\theta)\sqrt{D(t)}$ (so eventually relating with more general elements of $\C(t,\sqrt D)$):  it turns out that  the issue is substantially affected  by whether  or not $\theta$ is a zero of infinitely many $q_n$. 
 
 Using this link, Malagoli  \cite{Ma} has recently applied Theorem  \ref{T.zeros} to answer  in the affirmative an  analogue for the function field $\Q(t)$ of a question of MacMullen (see \cite{McM}, p. 22)  as to whether in every quadratic extension  there is an element whose partial quotients all have   degree  $\le 1$ (absolute value $\le 2$  in the numerical case), or at least degree bounded by an absolute constant. \footnote{As  in forthcoming joint work with F. Malagoli (see also \cite{Ma}), it is not too difficult to show that algebraic numbers sufficiently ramified above a prime $\ell$ and non integral at $\ell$ cannot be zeros of any $q_n$; however this fact alone   does not allow the said application. } 
 
 We further  remark that these applications   require  considering also the cases of non-square free $D(t)$, which complicates (also conceptually) the proofs.

 \medskip
 
A last result of this paper  concerns  the form  $x^2-Dy^2$ evaluated at convergent pairs $(p_n,q_n)$;  the corresponding values $R_n:=p_n^2-Dq_n^2$  in the numerical case  are the `smallest values at integral points'. 
  In the present case, $R_n$ is  a polynomial  of degree $d-\deg a_n\le d-1$; actually, all nonzero values $p(t)^2-D(t)q(t)^2$ of degree $<d$ are proportional to some  $R_n$.  Also, $R_n$ can be   constant only when $D(t)$ is Pellian, in which case the sequence of the $R_n$ is periodic.   In the numerical case, the prime  factors of the numbers $R_n$ are linked to generators and relations  for the quadratic class-group. Here, in   partial analogy, we may then ask about the {\it factorization into irreducible  factors}  
  of these polynomials. Sticking again for simplicity to the squarefree case, we have the following result, proved using a deep theorem of Faltings:
 
 \begin{thm}\label{T.irr} Let $D(t)$ be squarefree, non-Pellian  and with coefficients in a number field $\kappa$.  
 There exists a finite set  $\Phi=\Phi_\kappa$ of polynomials such that, for all large $n$,  $R_n(t)$ has exactly one irreducible factor (over $\kappa$) outside $\Phi$; this factor has degree $\ge d/2$ and may appear  only a number of  times bounded independently of $n$.
 \end{thm}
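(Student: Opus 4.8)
The plan is to reinterpret the factorization of $R_n=p_n^{2}-Dq_n^{2}$ as divisor arithmetic on the hyperelliptic curve $C\colon u^{2}=D(t)$ (of genus $g=d-1$, since $D$ is squarefree) and its Jacobian $J$, and then to combine Theorem~\ref{T.dega2}, the Mordell--Lang theorem of Faltings, and the Skolem--Mahler--Lech statement for algebraic groups of this paper. Setting $u=\sqrt D\in\kappa(C)$, the function $p_n-q_nu\in\kappa[t,u]$ has on $C$ the divisor $D_n+(\deg q_n+\deg a_n)\infty_{+}-(\deg q_n+d)\infty_{-}$, where $\infty_{\pm}$ are the points at infinity ($\sqrt D\sim\pm t^{d}$) and $D_n\ge 0$ is the $\kappa$-rational effective divisor of its finite zeros; by \eqref{E.approx}, $\deg D_n=d-\deg a_n=\deg R_n$, and $D_n$ is \emph{reduced} — no $\xi+\iota\xi$ occurs apart from ramification points of $\kappa(C)/\kappa(t)$, since $\gcd(p_n,q_n)=1$. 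Because $R_n=N_{\kappa(C)/\kappa(t)}(p_n-q_nu)$, the irreducible factors of $R_n$ over $\kappa$ correspond — up to the bounded ambiguity caused by the factors of $D$ (a prime of $\kappa(C)$ of residue degree $2$ projecting to a factor of half its degree) — to the $\kappa$-rational prime divisors occurring in $D_n$; in particular a prime divisor of $D_n$ of degree $\ge d/2$ has residue degree $1$ and yields an irreducible factor of $R_n$ of the same degree. Finally $[D_n-(\deg D_n)\infty_{-}]=-\deg q_{n+1}\cdot\delta$ in $J$, where $\delta=[\infty_{+}-\infty_{-}]$ (passing if necessary to the quadratic extension making $\infty_{\pm}$ rational); and since $D$ is non-Pellian, the Abel--Chebyshev theorem forces $\delta$ to be \emph{non-torsion}, so the classes $-\deg q_{n+1}\,\delta$ are pairwise distinct and lie in the rank-one subgroup $\Gamma_{0}=\langle\delta\rangle$. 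We shall also use that a reduced divisor of degree $\le g$ is the \emph{unique} effective divisor in its class — on a hyperelliptic curve any positive-dimensional linear system has a hyperelliptic fibre in its fixed part, so no other effective divisor of the same degree is reduced.

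Two elementary reductions isolate the content. First, Theorem~\ref{T.dega2} (applicable as $D$ is squarefree non-Pellian) gives $\deg R_n=\deg D_n\ge d/2$ for all large $n$; since also $\deg R_n\le d-1$, the polynomial $R_n$ has at most one irreducible factor of degree $\ge d/2$, necessarily simple. Second, if $D_n$ were supported only on prime divisors drawn from a prescribed finite set for infinitely many $n$, then $-\deg q_{n+1}\,\delta$ would take finitely many values, contradicting that $\delta$ is non-torsion and $\deg q_{n+1}\to\infty$; so for large $n$, $R_n$ has at least one factor outside any fixed finite $\Phi$. It thus remains to produce a single finite $\Phi=\Phi_{\kappa}$ such that, for large $n$, \emph{every} prime divisor of $D_n$ of degree $<d/2$ lies in it; the two reductions then force $D_n=\mathfrak p_n+E_n$ with $\mathfrak p_n$ the unique prime of degree $\ge d/2$ and $E_n$ supported on $\Phi$, which is the assertion.

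To confine the small part $E_n$, split according to the value of $\deg a_n$, which is eventually periodic by Theorem~\ref{T.dega}. When $\deg a_n\ge 2$ we have $\deg D_n\le g-1$, so $\gamma_n:=-\deg q_{n+1}\,\delta$ lies in $W_{g-1}(\bar\kappa)\cap\Gamma_{0}$, $W_{g-1}\subset J$ being (a translate of) the theta divisor. By Faltings' Mordell--Lang theorem this intersection is the union of a finite set and finitely many positive-dimensional cosets $c_i+B_i\subseteq W_{g-1}$ with $B_i\subseteq A_{\delta}:=\overline{\langle\delta\rangle}$; when $A_{\delta}=J$ (the generic situation, e.g.\ $J$ simple) no such coset exists, so $\deg a_n=1$ for all large $n$. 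Otherwise — precisely the geometry permitting $\deg a_n\ge 2$ infinitely often — one uses the Skolem--Mahler--Lech theorem for algebraic groups to write $\{n:\gamma_n\in c_i+B_i\}$ as a finite union of arithmetic progressions; along each, the unique reduced representative $D_n$ of $\gamma_n$ takes the shape $F_i+\Delta_n$ with $F_i$ fixed effective and $\Delta_n$ moving through a family parametrised by $B_i$ (the pullback to $C$ of a family of divisors on a fixed low-dimensional quotient of $J$), so that for large $n$ the moving part $\Delta_n$ is a single prime divisor, of degree $\ge d/2$, except over the finitely many rational or bounded-degree parameters where it splits into $\kappa$-rational or low-degree pieces lying in the finite set $C(\kappa)$ (finite by Mordell's theorem, as $g\ge2$) and its analogues. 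The case $\deg a_n=1$ (so $\deg D_n=g$) is handled in the same spirit but more simply: peeling off the degree-one primes of $D_n$ — confined to a finite set since $C(\kappa)$ is finite — and absorbing the degree-two primes into the exponent of $\delta$ (every hyperelliptic fibre has class $\delta$), one is left with a reduced divisor of degree $<g$ whose class lies in a fixed finite-rank subgroup, hence, by Mordell--Lang together with the uniqueness of reduced representatives, ranging over a finite set. Assembling the factors of $D$, the finitely many exceptional $R_n$, the divisors $F_i$, and the finitely many rational and bounded-degree pieces produces $\Phi_{\kappa}$.

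The main obstacle is exactly the non-generic case $A_{\delta}\subsetneq J$ — equivalently, $W_{g-1}$ (for $d=4$, the surface $W_{2}$) containing a translate of a positive-dimensional abelian subvariety on which $\delta$ moves; this is the geometry responsible for continued fractions with $\deg a_n\ge 2$ infinitely often (and for zeros recurring infinitely often), and controlling it requires not merely the recurrence statement that Faltings' theorem alone gives — which underlies Theorem~\ref{T.zeros} — but the full interaction of Mordell--Lang with the algebraic-group Skolem--Mahler--Lech theorem, together with a careful analysis of where the moving divisor $\Delta_n$ degenerates. The bound $d/2$ is calibrated to this: it is the threshold ensuring that all the divisors one must control have degree $<g$, hence lie in \emph{proper} Brill--Noether loci where Mordell--Lang has teeth. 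A secondary technical point is the half-degree ambiguity in the dictionary between factors of $R_n$ and prime divisors of $D_n$ coming from the ramified primes of $\kappa(C)/\kappa(t)$; for squarefree $D$ this merely adjoins the factors of $D$ to $\Phi_{\kappa}$, but it must be tracked, and a complete treatment of the non-squarefree case (needed for the application to McMullen's question) replaces $J$ by a generalized Jacobian and uses the semiabelian form of Mordell--Lang.
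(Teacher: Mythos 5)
Your translation of the problem into divisor language (the factorization of $R_n$ corresponding to the $\kappa$-rational prime decomposition of the reduced zero-divisor $D_n$ of $\varphi_n=p_n-q_n u$, the class $[D_n-(\deg D_n)\infty_+]$ being a multiple of $\delta$, and the uniqueness of reduced representatives in low degree) is exactly right and matches the paper's setup, as is the second reduction (non-torsion of $\delta$ forbids $D_n$ being supported on a fixed finite set). But the core of the argument — confining the small-degree primes of $D_n$ to a finite set $\Phi$ — has genuine gaps in both of your cases.

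In the case $\deg a_n=1$ (the generic case) the step ``absorbing the degree-two primes into the exponent of $\delta$ (every hyperelliptic fibre has class $\delta$)'' is unfounded: a degree-two prime of $\kappa(C)$ appearing in $D_n$ is a Galois-conjugate pair $(\xi)+(\sigma\xi)$, and since $D_n$ is reduced (no $\xi+\iota\xi$), such a pair is \emph{never} a hyperelliptic fibre — $\sigma\xi\ne\iota\xi$, because if $\sigma\xi=\iota\xi$ then $t(\xi)\in\kappa$ and the pair would be a fibre of $t$, excluded by reducedness. So degree-two primes cannot be absorbed into $\delta$, and after peeling the finitely many degree-one primes you may well be left with a reduced divisor still of degree $g$; then you are looking at $W_g=J$, where Mordell--Lang says nothing. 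In the case $\deg a_n\ge 2$, the assertion that $D_n$ decomposes as $F_i+\Delta_n$ with $\Delta_n$ ``moving through a family parametrised by $B_i$'' and eventually a single prime of degree $\ge d/2$ is precisely the hard point, and no argument is offered for it.

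The paper's proof avoids the case distinction by $\deg a_n$ entirely and works directly with the (alleged) minimal-degree ``new'' Galois-invariant piece $\omega\le\chi$, of degree $s<d/2$: its class lies in $J(\kappa)\cap W_s$, to which Faltings' theorem (the rational-points-on-a-subvariety form) applies, giving a coset $a+A\subseteq W_s$ that is the Zariski closure of these classes. The decisive step — absent from your sketch — is then a three-divisor linear-equivalence argument: for $\xi_1,\xi_2,\zeta$ effective of degree $s$ with classes in $a+A$, one gets $\xi_1+\xi_2\sim\zeta+\theta$ with $\theta$ effective of degree $s$, and Lemma \ref{L.unique} (using $2s<d$) forces the corresponding function to lie in $\C(t)$, hence the divisor $2\xi-\zeta-\theta$ to be $\iota$-invariant; fixing $\xi$ and varying $\zeta$ then pins a point of every $\zeta$ to a fixed finite set, contradicting the construction. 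The lower bound $\ge d/2$ on the surviving factor is obtained from this same argument, not from Theorem \ref{T.dega2}. To repair your proof you would need a substitute for this divisor-theoretic step; the ``peeling'' scheme as written does not supply one.
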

 
We shall add some further remarks after the proof of the theorem.

 \subsection{\tt Methods and organization of the paper}  The starting point of our proofs of the above theorems is by interpreting properties of convergents in terms of certain divisor equivalences. 
 
 This link is well known   in the case of  the Pell equation for squarefree $D(t)$, whose  solvability amounts to possible  torsion of    a suitable divisor class in the Jacobian of the underlying hyperelliptic curve; we shall recall this  in  Prop.  \ref{P.pell} below. 
    Our survey paper \cite{[Z3]} points out with some examples certain  generalizations of this   to Pell equations with non-squarefree $D(t)$, this time in terms of generalized Jacobians associated to the curve (as described e.g. in Serre's book \cite{SeAGCF}); see also  \cite{A-R}, \cite{Be}, \cite{Ber}, \cite{BMPZ}, \cite{McM2}  for further instances and links with other contexts.
   
    The paper \cite{Be} of Berry goes beyond the Pell equation and again relates the convergence to certain divisor relations (in part  following Chebyshev), however limiting to  small degree and with emphasis on  the computational viewpoint  (which is one possible applications of the present setting). 
    To our knowledge in the non-Pellian case these divisor relations have not been analyzed to  any further extent   explicitly  in the literature (and in particular generalized Jacobians seem not to appear anywhere).
     
     \medskip

Here we shall associate to the convergents suitable  equations  in a generalized Jacobian corresponding to $D(t)$; then we shall  develop  related criteria  leading us to the study of the Zariski closure of the set of  multiples of a certain `canonical' point in the generalized Jacobian in question. 
 
 We shall describe  this closure  by means of  
  a generalized form of the well-known Skolem-Mahler-Lech Theorem for zeros of recurrences, which   applies to  an arbitrary infinite  sequence of multiples of a given point in any algebraic group (in zero characteristic). 
 Recently some new  versions of the said  theorem appeared  in the literature, but we shall develop our one in \S \ref{S.SML} 
 below, with a self-contained very short  treatment (present  already in the first edition of the  writer's book \cite{[Z5]} independently of other versions). \footnote{One could also use   theorems of Faltings and their extensions.  However we   only need rank $1$ for most arguments  and moreover  these  results  would not take care of the additive part. Faltings' theorems shall be used for the proof of Theorem \ref{T.irr}.}
 
 \medskip

 In \S \ref{S.proof} 
 we shall deduce the proofs of the various assertions, and also include remarks, examples and some further precision.  
 
 \medskip
 
 We  add that this study has shown sometimes an unexpected behavior of the convergents, also through  numerical examples  
 related to striking geometrical features of hyperelliptic Jacobians, which may deserve and hopefully raise independent analysis.  
 
 \medskip
 
  \noindent{\bf Acknowledgements}.  It is a pleasure to thank Daniel Bertrand for clarifications concerning 
  generalized Jacobians.  I am grateful to  Olaf Merkert  for several explicit computations  
  and to Francesca  Malagoli for comments. I also thank the ERC Advanced Grant  267273 `Diophantine Problems' for support during the preparation of the paper.   

 \section{Convergents and divisor relations in generalized Jacobians} \label{S.jac}

 \subsection{\tt Notation and preliminary remarks}\label{SS.notation}  We start by introducing the relevant notation and recalling some basic facts for the reader's convenience. 
 
 As above, $D(t)\in \kappa[t]$ shall denote a   polynomial over a subfield $\kappa$ of $\C$, of even degree $2d$ and not a square in $\C[t]$. An affine transformation $t\mapsto at+b$ does not modify any of the results 
 we are interested in, so we shall often assume  that $D$ is monic and with second vanishing coefficient.

 We allow that $D(t)$ has square factors and we put $D(t)=D_1(t)^2\widetilde D(t)$, with monic $D_1,\widetilde D\in \kappa[t]$, $\widetilde D$ without multiple factors. (We shall often omit the tilde when $D$ is squarefree, i.e. when $D=\widetilde D$.)  We put  $\deg \widetilde D=2\tilde d>0$, $\deg D_1=d_1$. 
 
 We let $\widetilde H$ be a complete smooth curve with function field $\kappa(t,u)$, where 
 \begin{equation}\label{E.H}
 u^2=\widetilde D(t).
 \end{equation} 
 The function field $\kappa(t,u)$ is  a quadratic extension of $\kappa(t)$, and we shall denote the nontrivial involution $t\mapsto t$,  $u\mapsto -u$ with a dash.
 
 We note that the genus $\tilde g$ of $\widetilde H$ is given by 
 $
 \tilde g:=\tilde d-1.
 $
 Usually we shall be interested in the case $\tilde g\ge 1$, though it is easy to make sense of the statements below also for $\tilde g=0$. For $\tilde g\ge 2$ the field $\kappa(t)$ is known to be uniquely determined by $\widetilde H$, so the involution above is canonical.

 The function $t$ on $\widetilde H$ has two poles, denoted $\infty_\pm$, where we may choose the sign so that $t^{\tilde d}+u$ has a pole of order $ \tilde d$ at $\infty_+$. 

 We denote by $J=J_{\widetilde H}$ the Jacobian variety of $\widetilde H$,  embedding  $\widetilde H$ in $J$ via the map 
 $$
 j:x\mapsto \hbox{class of the divisor $(x)-(\infty_+)$}.
 $$
 
 Often for convenience we shall confound the curve with its embedding in $J$ and divisors with their classes, when there is no risk of misunderstanding.
 
 \medskip
 
 As is well known, each point of $J$ is the sum of $\tilde g$ points on $j(\widetilde H)$. This representation is generally not unique, but if $j(x_1)+\ldots +j(x_{\tilde g})=j(y_1)+\ldots +j(y_{\tilde g})$ then  the fact that  $\widetilde H$ is hyperelliptic is known to imply   that   $\sum (x_i)-\sum (y_i)$ is a divisor of some function in $\C(t)$, hence invariant by the said involution. (See  Lemma \ref{L.unique} for a general version.) 
 
 Inside $J$ we have closed varieties $\widetilde W_m$ defined as the set of sums $j(x_1)+\ldots +j(x_m)$, for $x_i\in\widetilde H$; we have $\dim \widetilde W_m=m$ for $m\le \tilde g$.

 \subsubsection{\tt Pause on the squarefree case} \label{SSS.pause} Before introducing generalized Jacobians, it shall be probably clearer to recall the link with the Jacobian itself and the Pell equation, assuming now that  $D$ is squarefree, i.e. $D_1$ is constant.
 Define then 
 \begin{equation}\label{E.delta}
 \delta:= \hbox{ the class of the divisor $(\infty_-)-(\infty_+)$ in $J$}.
 \end{equation} 
 For instance, we have relations $j(x)+j(x')=\delta$ for every $x\in \widetilde H$, derived by looking at the divisor of the function $t-t(x)$. 
 
 As  mentioned above, the following fact  is classical (attributed to Chebyshev in \cite{Be}):
 \begin{prop}\label{P.pell}
  The Pell equation is solvable if and only if $\delta$ is a torsion point in $J$.
 \end{prop}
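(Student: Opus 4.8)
The plan is to connect solvability of the Pell equation $x^2 - Dy^2 = 1$ with a torsion condition on $\delta$ by directly analyzing the divisor of a Pell solution. First I would observe that a nontrivial solution $(x(t), y(t)) \in \C[t]^2$ to $x^2 - D y^2 = 1$ is exactly a nontrivial unit $\mu := x + y\sqrt{D} = x + yu$ in the ring $\C[t, u]$ whose norm $\mu\mu' = x^2 - Du^2/\ldots$ wait --- since $D$ is squarefree here, $u^2 = D$ and the norm is $\mu\mu' = x^2 - Dy^2 = 1$. So a Pell solution corresponds to a function $\mu$ on $\widetilde H$ with $\mu\mu' = 1$, i.e. $\div(\mu) + \div(\mu') = 0$, and since $\mu$ is a polynomial in $t, u$ it has poles only at $\infty_\pm$. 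Thus $\div(\mu) = m\bigl((\infty_+) - (\infty_-)\bigr)$ for some integer $m$, which is nonzero precisely when $(x,y)$ is nontrivial (i.e. $y \neq 0$). This immediately gives that $-m\delta = \div(\mu)$ is principal, so $\delta$ is torsion in $J$.

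Conversely, I would start from the assumption that $\delta$ is torsion, say $m\delta = 0$ in $J$ for some $m > 0$. Then the divisor $m\bigl((\infty_-) - (\infty_+)\bigr)$ is the divisor of some rational function $f \in \C(t, u)$. Such an $f$ has poles and zeros only at $\infty_\pm$; writing $f = A(t) + B(t)u$ with $A, B \in \C(t)$ and using that $f$ is regular away from $\infty_\pm$ (the only points lying over $t = \infty$), one deduces $A, B$ are polynomials, so $f \in \C[t, u]$. Its norm $N(f) = f f' = A^2 - D B^2$ has divisor $\div(f) + \div(f') = m\bigl((\infty_-)-(\infty_+)\bigr) + m\bigl((\infty_+)-(\infty_-)\bigr) = 0$, hence $N(f)$ is a nonzero constant $c \in \C^\times$. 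Rescaling $f$ by a square root of $c$ (working over $\C$, where this is harmless) yields a function with norm $1$, i.e. a genuine Pell solution, and it is nontrivial because $\div(f) \neq 0$ forces $B \neq 0$.

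The main points requiring care, rather than a single hard obstacle, are the regularity argument identifying functions with divisor supported at $\infty_\pm$ as elements of $\C[t,u]$ (this uses that $\widetilde H$ is smooth and that $t$ has exactly the two poles $\infty_\pm$, each simple, since $\deg \widetilde D = 2\tilde d$ is even), and the bookkeeping that the norm form of the extension $\C(t,u)/\C(t)$ is $A^2 - DB^2$ so that units of norm $1$ match Pell solutions exactly. One should also note the sign/normalization issue: $\delta$ torsion and $-\delta$ torsion are equivalent, and $\delta \neq 0$ already (else $(\infty_+)$ and $(\infty_-)$ would be linearly equivalent on a curve of genus $\tilde g \ge 1$, impossible), so the torsion order $m$ is genuinely $\ge 2$ and the resulting Pell solution is genuinely nontrivial. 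I would present the forward direction first as it is the more transparent one, then the converse, keeping both within a few lines each since the only real content is the divisor computation.
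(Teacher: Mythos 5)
Your proof is correct and follows essentially the same route as the paper's: identify a Pell solution with a unit $\mu = x+yu$ of norm $1$, note its divisor must be supported at $\infty_\pm$ and of degree zero, hence equal to a nonzero multiple of $\delta$; conversely, take a function realizing a torsion relation $m\delta = 0$, observe its norm down to $\C(t)$ has divisor supported at $t=\infty$ and is therefore a nonzero constant, and rescale. Your write-up spells out a few details the paper compresses (polynomiality of $A,B$, nontriviality via $B\neq 0$, the observation that $\delta\neq 0$ for $\tilde g\ge 1$), but the underlying argument is the same.
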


 The proof is simple: let $(p,q)$ be a solution of the Pell equation, so $p(t)^2-q(t)^2D(t)=1$ and $p$ is not constant. Then both  $\varphi_\pm :=p\pm qu$ are  rational functions on $\widetilde H$, non constant and regular on the affine part $\widetilde H\setminus\{\infty_\pm\}$. 
 Hence their  divisors of poles are supported at infinity. However 
 $\varphi_+\cdot \varphi_-=1$, hence also the divisors of zeros are  supported at infinity, whence $\div(\varphi_+)=a(\infty_-)+b(\infty_+)$ for integers $a,b$ not both zero. But the degree is zero, so $b=-a$ and $a\delta$ is a principal divisor.  Since $a\neq 0$,  the class of $\delta$ is torsion.
 
 The argument can be reversed: if $a\delta=0$ on $J$, where $a\neq 0$, then $a\delta$ is the divisor of a function $\varphi$, whose divisor is therefore supported at infinity. Then the norm of $\varphi$ down to $\kappa(t)$ has a divisor supported at infinity and hence must be constant. The constant may be taken $1$ by division, whence the result.\footnote{Even on a field not algebraically closed, the constant may be gotten rid of by squaring $\varphi$.} 
 
 Note that this argument also shows that the solutions form a group under the association $(p,q)\mapsto p+qu\in\G_{\rm m}$. This group is either $\Z/2$ or $\Z/2\oplus \Z$; in this case the degree of $p$ in a solution corresponding to $a\delta$  is seen at once to be $|a|$.
 
 \medskip
 
 Even if $\delta$ is not torsion, we may use the above arguments to translate information concerning convergents.  Let $p/q$ be a convergent to $\sqrt D$, for coprime polynomials $p,q$.  Then, after choosing  appropriately the sign related to  $\infty_+$,  we have
 \begin{equation}\label{E.conv}
 \ord_{\infty_+} (p(t)-q(t)u)= \deg q +l,\qquad l>0,
 \end{equation}
for  a positive integer $l$ associated to the convergent, actually the degree of the corresponding partial quotient (in view of \eqref{E.approx}), i.e. $l=\deg a_n$ if $q=q_n$. As we have remarked, if for polynomials $p,q\neq 0$ we have such an equation with $l>0$ then $p/q$ is a convergent. 
 
 Let us set $\varphi:=p-qu$. Note that $\varphi$ has pole divisor supported at infinity, and by \eqref{E.conv} it has a zero at $\infty_+$, hence the divisor of poles is of the shape $a(\infty_-)$ where $a=\deg\varphi$.   On the other hand, because of the zero $\infty_+$ we have $\deg p=\deg q+\tilde d$ and then $a=\ord_{\infty_-}(\varphi)=-\deg p$.

 In conclusion,  we may write
 \begin{equation}\label{E.div}
 \div(\varphi)=(\deg q+l)(\infty_+)+\sigma -(\deg q+\tilde d)(\infty_-)=-(\deg q+\tilde d)\delta +(\sigma -(\tilde d-l)(\infty_+)),
 \end{equation}
where the divisor $\sigma$ is a sum of $\tilde d-l$ points $x_i\in \widetilde H$, not necessarily distinct, but distinct from both $\infty_\pm$ (for otherwise either the zero would be of  higher  order or the pole of lower order).  We  also deduce that for no pair we have $x_i=x_j'$, $i\neq j$,  for otherwise both $p\pm qu$ would vanish at $x_i$ (of order $\ge 2$ if $x_i=x_i'$) and  $p,q$ would not be coprime. \footnote{Similar requirements appear in \cite{Mum}, 3.17.}

 Incidentally, we find back that $l\le\tilde d$. Note also that we may write 
\begin{equation*}
\sigma -(\tilde d-l)(\infty_+)=\sum_{i=1}^{\tilde d-l}((x_i)-(\infty_+)).
\end{equation*}
Reading this equation on $J$ yields
\begin{equation}\label{E.J}
(\deg q+\tilde d)\delta=j(x_1)+\ldots +j(x_{\tilde d-l})\in \widetilde W_{\tilde g-(l-1)}.
\end{equation}
Already this equation shows that the case $l>1$ is very special (we have recalled  above that $\dim \widetilde W_m=m$ for $m\le \tilde g$). 

\medskip

Somewhat conversely, let $m$ be any positive integer, and represent $m\delta\in J$ as a sum $j(x_1)+\ldots +j(x_{\tilde g})$ of $\tilde g$ points of $\widetilde H$. Then $(m-\tilde g)(\infty_+)+(x_1)+\ldots +(x_{\tilde g})-m(\infty_-)$ is the  divisor of some function, necessarily of the shape $p^*(t)-q^*(t)u$, for polynomials $p^*,q^*$. We then find that $p^*/q^*$ is a convergent; however $p^*,q^*$ may not be coprime: this corresponds to the fact that we may have some pairs $x,x'$ among the $x_i$, in which case the representation  could be reduced to less that $\tilde g$ summands (on decreasing $m$).  We can also have some $x_i=\infty_+$ (in which case the order of zero increases) or $x_i=\infty_-$ (in which case the representation `comes' from a similar one with smaller $m$ and less that $\tilde g$ summands). 

\medskip

This is a viewpoint on Pad\'e approximations to $\sqrt{D(t)}$ different from the more usual one involving linear algebra. (See also \cite{Be}.) It may lead to algorithms in various directions (e.g. in computing torsion orders). 

\medskip

All of this says that the convergents correspond to expressing multiples of $\delta$ as sums of $\tilde g$ points of $\widetilde H$ in $J$. 
For instance, when $\tilde g=1$ we have just to find $m\delta$ as a point on an elliptic curve, by the well-known procedures. This also yields certain recurrence formulae on which  we do not pause  here (but see Example \ref{EX.ell}).  


\begin{small}
\begin{rem}{\tt Heights of convergents}. \label{R.BoCo} To  conclude  this pause, let us see how these  facts imply the behaviour of heights mentioned above in the Introduction, where we suppose now that $\kappa$ is a number field. Namely, we prove the inequality 
$$
h(q)\gg (\deg q)^2
$$
 for the convergents $q(t)$ associated to the non-Pellian $\widetilde D$. We have seen in the proposition above that $\widetilde D$ is Pellian if and only if $\delta$ is torsion in $J$. Suppose   this does not hold. Then $\hat h(\delta)>0$, where $\hat h$ denotes a canonical height on $J$, and by standard facts (see \cite{[BG]}) we have $\hat h\left(j(x_1)+\ldots +j(x_{\tilde d-l})\right)=(\deg q+\tilde d)^2\hat h(\delta)\gg (\deg q)^2$. Since the height is  a quadratic form, we deduce that $\max\hat h(j(x_i))\gg (\deg q)^2$, whence the same lower bound holds for $\max h(x_i)$, for any height $h$ on $\widetilde H$ associated to an ample divisor. But the values $t(x_i)$ are roots of the polynomial $p(t)^2-q(t)^2\widetilde D(t)$, of degree $\tilde d-l$. We conclude that the height of this polynomial has the same kind of lower bound, and this must hold as well for both $h(p), h(q)$ (since $q(t)$ determines $p(t)$ linearly with coefficients of height $\ll\deg q$).\footnote{It may happen that $\widetilde D(t)$ is Pellian but $D(t)$ is not; in this case the height of the $q_n$ grows linearly in $n$. This may be proved from the considerations below, which this time relate with heights in a torus $\G_{\rm m}$ rather than  an abelian variety. }

The same arguments also show the converse bound $h(q)\ll (\deg q)^2$. Actually, this also follows from Siegel's lemma, since the $m$-th coefficient of the  Laurent series for $\sqrt{D(t)}$ has height $\ll m$. (In the Pellian case we have $h(q_n)\ll \deg q_n$.) As already remarked, the lower bound was discovered by Bombieri and P.B. Cohen and proved in \cite{Bo-Co} in rather greater generality. 
\end{rem}

\begin{rem}{\tt Values of convergents}. \label{R.values} The large height of the convergents and of the $x_i$ makes it also difficult to detect the behaviour of values $q_n(\xi)$ at a given point $\xi$. Note that this could  be useful e.g. for deriving numerical approximations to $\sqrt{D(\xi)}$ on plugging in $t=\xi$ in the Pad\'e approximation, suitably normalized.  The large height may however destroy  the information.   Also, for growing degrees $\approx n$ of the convergents, a given $\xi$ a priori could go very near to some of the $t(x_i)$, again confounding the expectations. As we have seen, these $x_i$ are essentially functions of $n\delta$. A deep theorem of Faltings prevents the distance $|t(x_i)-\xi|$ to be less than $\exp(-\epsilon \hat h(n\delta))$ (with respect to any given absolute value). However since the height behaves quadratically this is   too weak to locate $q(\xi)$. \footnote{One exception occurs in the elliptic case, when  lower bounds  of Masser   for linear forms in elliptic logarithms should suffice.} 
\end{rem}
\end{small}

\subsubsection{\tt Generalized Jacobians} \label{SSS.gj} After this pause, we go to the general case. Now, if $D(t)$ is not squarefree the curve $u^2=D(t)$ is singular also at finite points. We can however extend much of the previous considerations by using generalized Jacobians, for which we refer to Serre's book \cite{SeAGCF}, see especially Chs. IV, V and VII.
\footnote{We warn the reader that to avoid a somewhat complicated notation sometimes  one may prefer, at least for part of the issues,  to   think of the case when $D_1$ has no multiple roots and is prime to $\widetilde D$ or even to stick to the squarefree case just considered.} 

\medskip

Let then $\rho$ be a root of $D_1(t)$ of multiplicity $e=e_\rho\ge 1$. There are two cases to consider:

\medskip

{\bf Case 1}.  $\widetilde D(\rho)\neq 0$. In this case there are two points $\xi_\rho,\xi_\rho'$ of $\widetilde H$ above $t=\rho$. The total multiplicity of $\rho$ as a root of $D(t)$ is $2e$.

{\bf Case 2}. $\widetilde D(\rho)=0$, so there is a single point $\xi_\rho$ of $\widetilde H$ above $t=\rho$ (which is ramified with respect to $t:\widetilde H\to\P_1$, and we have $\xi_\rho'=\xi_\rho$). The total multiplicity of $\rho$ as a root of $D(t)$ is $2e+1$. 

\medskip

We consider the {\it strong equivalence}  of divisors of degree $0$ on $\widetilde H$ with support disjoint from the set $\cS$ of all such points $\xi_\rho,\xi_\rho'$ (we also say `coprime' to $\cS$), defined by saying that
\begin{equation}\label{E.strong}
A\approx 0
\end{equation}
precisely if $A$ is principal as a divisor on $\widetilde H$, and $A=\div(f)$, where $f-1$  vanishes at both $\xi_\rho$, $\xi_\rho'$ in Case $1$ (resp. at $\xi_\rho$ in Case $2$) to order $\ge e$ (resp. $\ge 2e+1$).

It is proved in \cite{SeAGCF}  (see especially Ch. IV) that this last condition makes the set of divisors of degree $0$ coprime to $\cS$ a (commutative) group-variety which is an extension of the usual Jacobian $J$ of $\widetilde H$ by a linear group $\Lambda=\Lambda_{\tt m}$ which is a product of a power of $\G_{\rm m}$ by a power of $\G_a$.  More precisely, this extension is associated to the {\it modulus} ${\tt m}=\sum_s\epsilon_s\cdot s$, where $\epsilon_s=e_\rho$ if $s=\xi_\rho,\xi_\rho'$  in Case 1 and $=2e_\rho+1$ in Case 2, and is denoted $J_{\tt m}$. As explained in \cite{SeAGCF}, if $\tt m\neq 0$ we have an exact sequence
\begin{equation}\label{E.J_m}
0\to\Lambda \to J_{\tt m}\to J\to 0,
\end{equation} 
where $\Lambda=\G_{\rm m}^{|\cS|-1}\times \G_a^{\sum_s(\epsilon_s-1)}$; the association is explained in detail in the quoted book.
Of course the map on the right is obtained by weakening the strong equivalence above to usual linear equivalence. 

We shall actually need a group-variety smaller than this. It is defined by taking the quotient of $J_{\tt m}$ by the group of strong classes of principal divisors $A$ prime to $\cS$, such that $A\approx A'$ (where $A\mapsto A'$ is the usual involution); so this is a subgroup of $\Lambda$. It is readily checked that this is well-defined  and that  the quotient  group is isomorphic to an extension of $J$ by a product $\prod_{D_1(\rho)=0}L_\rho$, where the group $L_\rho$ is $\G_{\rm m}\times \G_a^{e-1}$ in Case $1$ and $\G_a^{e}$ in Case $2$.

\medskip

Observe that the principal divisor classes factored out correspond to  functions  $f=a(t)+b(t)u\in \C(\widetilde H)$ with rational functions $a,b\in \bar \kappa(t)$ such that $a$ has no poles or zeros in $\cS$  and $b$ is divisible by $D_1$. In practice, we are detecting the individual values of  ratios $f/f'$  at the points in $\cS$,  actually taking into account the expansions up to the multiplicities. (Note that at pairs $\xi_\rho,\xi_\rho'$ these values are reciprocal; this is why we have a single copy of $\G_{\rm m}$ for each pair and the dimensions are all  halved.)

We denote by $G=G({\tt m})$ such a group-variety, so we have an exact sequence of algebraic groups
\begin{equation}\label{E.G}
0\to  \prod_{D_1(\rho)=0}L_\rho\to G\stackrel{\pi}\to J\to 0.
\end{equation} 
Hence the dimension   of $G$ is 
$$
g:=\dim G=\dim J+\deg D_1=\tilde g+\deg D_1=\tilde d-1+\deg D_1=d-1.
$$
Naturally, $g$ is the arithmetic genus of the singular curve defined by $u^2=D(t)$ at finite points, and smooth at infinity.\footnote{This group may be also seen as a fiber product over $J$ of the various extensions obtained at the individual  roots $\rho$.} 

\medskip

As in \cite{SeAGCF}, we have an embedding of $\widetilde H\setminus \cS$ in $G$, obtained similarly to the one in $J$, i.e. by sending a point $x\in\widetilde H\setminus \cS$ first to class in $J_{\tt m}$ of  the divisor $(x)-(\infty_+)$ and then taking the image of this class in $G$, which we denote with $[x]$. However if $\tt m\neq 0$  the map is not a morphism on all of $\widetilde H$. 

We define $W_h=W_h(\tt m)$ as the  image of the map $(x_1,\ldots ,x_h)\mapsto [x_1]+\ldots +[x_h]$ from the symmetric $h$-th power of $\widetilde H\setminus \cS$ to $G$.  It is a `constructible' set, by a well known theorem of Chevalley; however it may be not Zariski-closed (except in the case of the usual Jacobian, i.e. when $\cS$ is empty) and then we let $\overline{W_h(\tt m)}$ be its Zariski closure. 

\medskip

 It may be easily checked that  
 $\overline{W_g}=G$, and that actually this map is a birational   isomorphism (see \cite{SeAGCF}). 
For $h<g$ we must have $\dim W_h=h$ and  we obtain proper subvarieties of $G$.\footnote{At least in the case of the usual Jacobian, these subvarieties have been widely studied in the context of special divisors and linear series. See e.g. \cite{ACGH}, where a somewhat different notation is used; indeed, our notion depends on the embedding of $H$, which in other contexts may be inconvenient. See  also \cite{DF} and \cite{F}, where these varieties appear in connection with rational points of bounded degree, on which we shall further comment.}

 \medskip
 
 Note also that if we have an equality $\sum_{i=1}^g[x_i]=\sum_{j=1}^g[y_j]$  (for points not in $\cS$) then there exists a function $f$ on $\widetilde H$ with divisor $\sum (x_i)-\sum (y_j)$ such that $\div(f/f')$ is strongly equivalent to $0$. This easily entails 
 that $f\in \C(t)$, so the $x_i$ which are not $\infty_+$,  or among the $y_j$, must  appear together with $x_i'$ and similarly for the $y_j$. Indeed, we have the following simple lemma, useful throughout:
 
 \begin{lem}\label{L.unique}
 Notation as above, let  $f=(a(t)+b(t)D_1(t)u)/c(t)\in\C(\widetilde H)$ where $a,bD_1,c$ are coprime polynomials in $\C[t]$. 
 Then either $\deg f\ge d$ or $b=0$.
 \end{lem}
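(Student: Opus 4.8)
The plan is to prove the contrapositive: assuming $b\neq 0$, equivalently that the polynomial $B:=bD_1$ is nonzero (and divisible by $D_1$, hence of degree $\geq d_1$), I will show that $f$, regarded as a morphism $\widetilde H\to\P^1$, has at least $d$ poles counted with multiplicity, i.e. $\deg f\geq d$. Set $h:=cf=a+Bu\in\C(\widetilde H)$. Since $h$ is a polynomial expression in $t$ and $u$, it is regular at every finite point of $\widetilde H$, so the poles of $f=h/c$ can occur only at $\infty_\pm$ or over the zeros of $c$. The strategy is to bound the pole orders at these two kinds of points separately and then add them.

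I would first treat the points at infinity. Put $N:=\max(\deg a,\deg B+\tilde d)$; since $\deg B\geq d_1$ one gets $N\geq d_1+\tilde d=d$. At $\infty_\pm$ the functions $a$ and $Bu$ have pole orders $\deg a$ and $\deg B+\tilde d$ respectively, so $v_{\infty_\pm}(h)\geq -N$, with equality unless $\deg a=\deg B+\tilde d$ and the leading coefficients cancel at that point. But $u\sim t^{\tilde d}$ at $\infty_+$ while $u\sim -t^{\tilde d}$ at $\infty_-$, so the term $Bu$ has opposite leading coefficients at the two points, and such a cancellation can therefore occur at most at one of $\infty_\pm$. Hence there is a point $P\in\{\infty_+,\infty_-\}$ with $v_P(h)=-N$, so $v_P(f)=-N+\deg c$ and $f$ has a pole of order $N-\deg c$ at $P$ whenever $\deg c<N$.

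The finite analysis is where the real work lies, and I expect it to be the main obstacle. Let $\gamma$ be a zero of $c$ of multiplicity $\mu_\gamma$; the claim is that $f$ has poles over $\gamma$ of total order $\geq\mu_\gamma$. The crucial input is that $\gcd(a,B,c)=1$ forces $\gamma$ not to be a common zero of $a$ and $B$. If $\widetilde D(\gamma)\neq 0$ there are two points $x,x'$ over $\gamma$, unramified, with $u(x)=-u(x')\neq 0$ and $v_x(c)=v_{x'}(c)=\mu_\gamma$; the key point is that $h$ cannot vanish at both $x$ and $x'$, since $a(\gamma)+B(\gamma)u(x)=0$ and $a(\gamma)-B(\gamma)u(x)=0$ together would force $a(\gamma)=B(\gamma)=0$; hence at one of them $v(h)=0$ and $f$ has a pole of order $\mu_\gamma$ there. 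If $\widetilde D(\gamma)=0$ the unique point over $\gamma$ is ramified, with $v(t-\gamma)=2$ and $v(u)=1$; comparing the even valuation of $a$ with the odd valuation of $Bu$, and using that $\gamma$ divides at most one of $a,B$, one finds $v(h)\in\{0,1\}$, so the pole order of $f$ over $\gamma$ is $2\mu_\gamma$ or $2\mu_\gamma-1$, in either case $\geq\mu_\gamma$.

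Finally I would assemble the count. The finite poles contribute in total at least $\sum_{\gamma\mid c}\mu_\gamma=\deg c$, the pole at $P$ contributes at least $\max(0,N-\deg c)$, and since $P$ is distinct from all the finite points these contributions add, giving $\deg f\geq\deg c+\max(0,N-\deg c)=\max(\deg c,N)\geq N\geq d$. This proves the contrapositive and hence the lemma. The only genuinely delicate steps are the two cancellation phenomena over zeros of $c$ just described (ruling out simultaneous vanishing of $h$ at the pair over an unramified zero, and the parity argument at a ramified zero); everything else is routine bookkeeping with valuations.
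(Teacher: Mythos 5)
Your proof is correct and follows essentially the same route as the paper's: split the pole divisor of $f$ between the finite points lying over zeros of $c$ (where coprimality of $a$, $bD_1$, $c$ prevents simultaneous vanishing of the numerator, giving total contribution $\geq\deg c$) and the points $\infty_\pm$ (where $b\neq 0$ forces a pole at at least one of them). You supply more explicit bookkeeping — the quantity $N=\max(\deg a,\deg bD_1+\tilde d)$, the parity argument at ramified points, and the observation that $Bu$ has opposite leading terms at $\infty_\pm$ — but these are just the details the paper's terser phrasing ("at least one between $\infty_\pm$ is a pole of $f$ with order at least $d-\deg c$") leaves to the reader.
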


\begin{proof}  
Let  $\xi\in \C$ and let $m=\ord_\xi c(t)>0$. Suppose first that $\widetilde D(\xi)\neq 0$ and observe that there are two points in $\widetilde H$ above $t=\xi$ and that at least one is a pole of $f$ with multiplicity $m$ (for otherwise $\xi$ would be a zero of both $b(t)D_1(t)$ and $a(t)$). If $\widetilde D(\xi)=0$,  there is a unique point of  $\widetilde H$ above $t=\xi$, and (for the same reason) this must be   a pole of $f$ with multiplicity at least $2m-1$. Observe that these poles contribute at least $\deg c$ to $\deg(f)$. If $\deg c\ge d$ we are done; otherwise,   if $b(t)\neq 0$ then at least one between $\infty_\pm$ is a pole of $f$ with order at least $d-\deg c$, concluding the argument.
\end{proof}


 \medskip

Finally, if $\kappa$ is a field of definition for the curve and the points in $\cS$, these varieties and maps are defined over $\kappa$. We do not pause instead on the 
  question of when these group-extensions split as products. 

\subsubsection{\tt A `canonical' algebraic subgroup}\label{SSS.can}   We have seen that at least in the squarefree case the Pell equation is solvable precisely when $\delta$ is torsion in the Jacobian. Even if this does not happen, the multiples of $\delta$ are especially relevant in the context. 
Hence,  for a modulus $\tt m$ as above, let us define the `canonical' algebraic subgroup $\Delta({\tt m})\subset G({\tt m})$ as 

\medskip

{\it $\Delta({\tt m})=$  the Zariski closure in $G({\tt m})$ of the set of multiples of the (class of) $\delta$. }

\medskip 

We shall also usually denote by $\Delta_0(\tt m)$ the connected component of identity in $\Delta(\tt m)$. 

For instance, in the squarefree case  we have $\tt m=0$ and $\Delta_0:=\Delta_0(0)$ is an abelian subvariety of $J$, and hence if  $J$ is simple, as generically happens,   then either the Pell equation is solvable or $\Delta_0=J$ which  yields relevant consequences, as we shall see.


\subsubsection{\tt Convergents and divisors}\label{SSS.pause2} 

We now give some analogues of the  facts and formulas previously obtained for the squarefree case, omitting the proofs because completely similar. 

\medskip

We let $u_1:=D_1(t)u$, so $u_1^2=D(t)$. 
Also, we continue to denote $\delta:=(\infty_-)-(\infty_+)$ and use the same notation for its  image   in $G$, i.e. $\delta=[\infty_-]$. 

The solvability of the Pell equation for $D(t)$  now  corresponds to the fact that $\delta$ is torsion on $G$. Namely, with exactly the same proof as above, we have

 \begin{prop}\label{P.pell2}
  The Pell equation for $D(t)$  is solvable if and only if $\delta$ has finite order in $G$, i.e. $\Delta$ is finite.
 \end{prop}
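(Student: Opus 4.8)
The plan is to mimic the proof of Proposition \ref{P.pell} almost verbatim, taking care only of the finite singular points and the strong-equivalence conditions defining $G$. First I would assume the Pell equation for $D(t)$ is solvable, say $p(t)^2-q(t)^2D(t)=1$ with $q\neq 0$, and set $\varphi_\pm:=p\pm qu_1=p\pm qD_1(t)u$, which are functions on $\widetilde H$ with $\varphi_+\varphi_-=1$. Exactly as before, since $p$ and $qu_1$ have no finite poles, the pole divisor of $\varphi_+$ is supported at $\{\infty_+,\infty_-\}$, and since $\varphi_+\varphi_-=1$ the zero divisor is too; by degree reasons $\div(\varphi_+)=a((\infty_-)-(\infty_+))=a\delta$ for some nonzero integer $a$. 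The new point is that I must check this equality already holds for the \emph{strong} equivalence defining $G$, not merely linear equivalence on $\widetilde H$: but $\varphi_+=p+qD_1 u$ is of the form $(a(t)+b(t)D_1(t)u)/c(t)$ with $c=1$, so at every point $\xi_\rho$ or $\xi_\rho'$ of $\cS$ it takes the value $p(\rho)+q(\rho)D_1(\rho)u(\xi_\rho)=p(\rho)$ (the $u_1$-part vanishes because $D_1(\rho)=0$), and similarly $\varphi_-(\xi_\rho)=p(\rho)$; since $\varphi_+\varphi_-=1$ we get $p(\rho)^2=1$, so $p(\rho)=\pm1$ is a unit and moreover $\varphi_+/\varphi_-$ equals $\varphi_+^2=p(\rho)^2=1$ at each point of $\cS$ — one must further note that, because $b$ is divisible by $D_1$, the expansion of $\varphi_+/\varphi_-$ at $\xi_\rho$ agrees with $1$ to the required order $e_\rho$ (Case 1) or $2e_\rho+1$ (Case 2), which is precisely the defining condition for the class of $\div(\varphi_+)$ in $G$ to be well-behaved and equal to $a\delta$. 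Hence $a\delta=0$ in $G$ with $a\neq 0$, i.e. $\delta$ has finite order, i.e. $\Delta$ is finite (being the Zariski closure of a finite set).

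Conversely, suppose $\delta$ has finite order $a>0$ in $G$. Then $a\delta$ is strongly equivalent to $0$, so there is a function $\varphi$ on $\widetilde H$ with $\div(\varphi)=a((\infty_-)-(\infty_+))$ and with $\varphi/\varphi'$ vanishing to $1$ (i.e. $\varphi/\varphi'\equiv1$ to the prescribed order) at the points of $\cS$. In particular $\varphi$ has no zeros or poles at finite points, so $\varphi=p(t)+q(t)D_1(t)u$ for polynomials $p,q$ (the $u$-coefficient is divisible by $D_1$ by the strong-equivalence condition, which forces $b\equiv0\bmod D_1$ in the notation of Lemma \ref{L.unique}); indeed this is exactly the shape of the factored-out functions described before \eqref{E.G}. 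Its norm $N:=\varphi\varphi'=p^2-q^2D_1^2\widetilde D=p^2-q^2D$ lies in $\bar\kappa(t)$ and has divisor supported at infinity, hence is a nonzero constant $c$. Replacing $\varphi$ by $\varphi/\sqrt{c}$ (or, over a non-closed field, by $\varphi^2$ and rescaling, as in the footnote to Proposition \ref{P.pell}) we may take $c=1$, giving $p^2-q^2D=1$; and $q\neq 0$ since $a\neq 0$ forces $\varphi\notin\C$. This is a solution of the Pell equation for $D(t)$.

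The last equivalence, $\delta$ of finite order $\iff$ $\Delta$ finite, is immediate: $\Delta$ is by definition the Zariski closure of $\{m\delta:m\in\Z\}$, which is finite exactly when the subgroup generated by $\delta$ is finite, i.e. when $\delta$ is torsion. The only genuinely non-routine point — the main obstacle — is the bookkeeping showing that a Pell solution produces a relation that is strong (not just linear) at every point of $\cS$, and conversely that a strong torsion relation forces the $u$-coefficient to be divisible by $D_1$ so that one lands on a polynomial identity $p^2-q^2D=c$; both amount to tracking the valuations of $\varphi/\varphi'$ at the $\xi_\rho$ against the modulus $\tt m$, and are exactly the computations the paper has already set up in \S\ref{SSS.gj} (in particular the description of the factored-out functions and Lemma \ref{L.unique}). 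Everything else transcribes the squarefree argument without change.
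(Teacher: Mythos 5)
Your proof is correct and follows exactly the route the paper intends: it states that Proposition \ref{P.pell2} holds ``with exactly the same proof'' as Proposition \ref{P.pell}, and you have supplied the needed verification that the divisor relation $\div(\varphi_+)=a\delta$ holds in $G$ (i.e. $\varphi_+/\varphi_-\equiv 1$ modulo ${\tt m}$, tracking orders at the $\xi_\rho$) and, conversely, that the $G$-condition forces the $u$-coefficient of $\varphi$ to be divisible by $D_1$, so the norm is $p^2-q^2D$ rather than $p^2-q^2\widetilde D$. One small language point: in the converse you say $a\delta$ is ``strongly equivalent to $0$'', which is the stronger condition for vanishing in $J_{\tt m}$ (namely $\varphi\equiv1$), whereas vanishing in $G$ only asks $\varphi/\varphi'\equiv1$ modulo ${\tt m}$ --- you immediately state and use the correct weaker condition, so nothing breaks.
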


In general, as before let $p/q$ be a convergent to $\sqrt D$, for coprime polynomials $p,q$ and let as above 
 \begin{equation}\label{E.conv2}
 \ord_{\infty_+} (p(t)-q(t)u_1)=\deg q +l,
 \end{equation}
 where $l>0$.  
 Let us set $\varphi:=p-qu_1$. We can repeat part of the above considerations, and conclude that  $\deg p=\deg q+d$ and 
 \begin{equation}\label{E.div2}
 \div(\varphi)=(\deg q+l)(\infty_+)+\sigma -\deg p\cdot  (\infty_-)=-\deg p\cdot \delta +(\sigma -(d-l)(\infty_+)),
 \end{equation}
where the divisor $\sigma$ is a sum of $d-l$ points $x_i\in \widetilde H$, not necessarily distinct, but distinct from both $\infty_\pm$.  

A difference with the previous case is that  we now can  deduce that for no pair we have $x_i=x_j'$, $i\neq j$ only  if $p,D_1$ are coprime.

\medskip

We find back again  that $l\le d$. 

We cannot in general read  this equation on $G$, since $p,D_1$ may be not coprime. We shall   reduce later to the coprime case. But we can still read it on $J$, which gives
\begin{equation}\label{E.J2}
(\deg p)\delta= (\deg q+ d)\delta=j(x_1)+\ldots +j(x_{d-l}).
\end{equation}

\subsection{\tt Some formulae for convergents}\label{SS.formulae}  We let $(p_n,q_n)$ be the sequence of convergents to $\sqrt{D(t)}$, and let $a_n$ be the partial quotients, putting $l_n:=\deg a_n$.  We give some formulae which shall be applied later  (some of which may be also found in \cite{[vdPT]}). 

Taking into account the notation above, we also set $\varphi_n:=p_n-q_nu_1$, where as before $u_1=\sqrt D=D_1u$. 

From the formulae $p_nq_{n+1}-p_{n+1}q_n=(-1)^n$ we derive
\begin{equation*}
\varphi_n\varphi_{n+1}'=p_np_{n+1}-q_nq_{n+1}D+(-1)^nu_1=S_n+(-1)^nu_1,
\end{equation*}
where  $S_n(t):=p_np_{n+1}-q_nq_{n+1}D$.  For instance, $S_0=p_0p_1=a_0$. 

Let also $R_n(t):=\varphi_n\varphi_n'$ be  the norm of $\varphi_n$ down to $\kappa(t)$, so $R_n$ is a polynomial; its roots are the values $t(x_i)$, the $x_i=x_{in}$ coming from formula \eqref{E.div2} above with $(p,q)=(p_n,q_n)$, and $\deg R_n=d-l_n$. Taking norms of the last displayed equation, we get 
\begin{equation*}
R_n(t)R_{n+1}(t)=S_n(t)^2-D(t),
\end{equation*}
whence in particular 
\begin{equation*}
\deg (S_n^2-D)=2d-l_n-l_{n+1}\le 2d-2,
\end{equation*}
so $S_n=\pm \sqrt D+O(t^{d-l_n-l_{n+1}})$, which implies  $S_n=\pm t^d +O(t^{d-2})$. 

\medskip

We have the standard recurrence formulae $p_{n+1}=a_np_n+p_{n-1}$,  $q_{n+1}=a_nq_n+q_{n-1}$, $n\ge 0$, which yield in particular $\deg q_{n+1}=\deg q_n+l_n$ and $\varphi_{n+1}=a_n\varphi_n+\varphi_{n-1}$.

Setting also $\nu_n:=\varphi_{n+1}/\varphi_n$, we obtain $\nu_n\nu_n'=R_{n+1}/R_n$ and 
\begin{equation*}
\nu_n={\varphi_{n+1}\varphi_n'\over R_n}={S_n+(-1)^{n+1}u_1\over R_n}.
\end{equation*}

On the other hand, the recurrence for $\varphi_n$ yields 
\begin{equation*}
\nu_n=a_n+{1\over \nu_{n-1}}.
\end{equation*}

Conjugating this formula and adding, we get
\begin{equation*}
2{S_n\over R_n}=\nu_n+\nu_n'=2a_n+{\nu_{n-1}+\nu_{n-1}'\over \nu_{n-1}\nu_{n-1}'}=2a_n+2{S_{n-1}\over R_n},
\end{equation*}
and finally
\begin{equation}\label{E.a_n}
a_n={S_n-S_{n-1}\over R_n}. 
\end{equation}
Comparing degrees, we see that $\deg(S_n-S_{n-1})=d$, whence $S_n=(-1)^n\sqrt D +O(t^{d-l_n-l_{n+1}})$.  In particular,
\begin{equation*}
a_n=2(-1)^n{\sqrt D\over R_n} + O(t^{-1}).
\end{equation*}
This also exhibits $a_n$ as the polynomial part of $2(-1)^na_0/R_n$, so we can calculate inductively  these quantities e.g. in the order $\ldots \to R_n\to a_n\to S_n\to R_{n+1}\to\ldots$. 

\medskip

Recall now that we are assuming that $D(t)=t^{2d}+O(t^{2d-2})$, so $\sqrt D=t^d+O(t^{d-2})$. 

Also,  omitting the index $n$ for a moment, the roots of $R(t)=R_n(t)$ are the $t_i=t(x_i)$, i.e. $R(t)=c\prod_{i=1}^{d-l}(t-t_i)$, $c=c_n$.  We find therefore for example that
\begin{equation*}
a_n=(-1)^n{2\over c}\left(t^l+(\sum t_i)t^{l-1}+O(t^{l-2})\right).
\end{equation*}

\section{A Skolem-Mahler-Lech Theorem for Algebraic Groups} \label{S.SML} 

The Skolem-Mahler-Lech Theorem (SML in the sequel) states that for a linear recurrence sequence $(u_n)_{n\in\N}$ (over $\C$) the set of $n$ with $u_n=0$ is the union of a finite set and a finite set of arithmetical progressions. 
Taking into account the structure of linear recurrences, we are simply describing the set of integral zeros of an exponential polynomial $\sum_{i=1}^rP_i(n)a_i^n$ for complex polynomials $P_i$ and complex numbers $a_i\neq 0$. 

This is an algebraic relation 
on the points $\gamma_n:=(n,a_1^n,...,a_r^n)$; on the other hand, $\gamma_n$ is just $n$-times $\gamma_1$ in the algebraic group $\G_a\times \G_{\rm m}^r$. In this view, a natural generalization is obtained by taking an algebraic group $\Gamma$ (over a subfield of $\C$), a point $\gamma\in \Gamma$, and asking about the Zariski closure of an arbitrary  set of multiples  (powers) $\gamma^n$ in $\Gamma$. 

\medskip

To present such a generalization, to be applied later to our context,  is the task of the present short section. These results, though perhaps somewhat implicit in the context of the SML theorem, 
seem to have been explicitly stated for (one of) the first time(s)   in the 2009 book  \cite{[Z5]} by the writer, with a  sketch of a fairly simple proof (based on ideas - mostly of Skolem and Chabauty -   near to the original proofs of SML). This has never appeared in articles and we intend to insert here a more precise version of  such short proof, with the addition of a relevant corollary,  for clarity and completeness. 

We mention that the (recent)  literature contains other   versions of the SML theorem; however most of them, though covering several other situations,  do  not to apply generally to our context,   one exception occurring  within the 2010 paper \cite{BGKT}, where a SML Thm. is obtained concerning iterates of arbitrary  \'etale maps. Also, theorems of Faltings and others (used here for the proof of Theorem \ref{T.irr}) would suffice for several of the applications we have in mind. However for the above reasons we prefer to insert our simple  and very short treatment, which moreover yields sometimes supplementary  information (e.g. of effective nature).

\medskip

Let then $\Gamma$ be an algebraic group over  $\C$, and $\gamma\in \Gamma$. 
We start with a simple lemma.

\begin{lem}\label{L.SML}
 For $b\in\Z$, let $Z(b)$ be the Zariski-closure (in $\Gamma$) of the set 
$\{\gamma^{nb} :n\in\N\}$, setting $Z=Z(1)$. Then we have:  

(i) $Z(b)$  is a commutative algebraic subgroup of $\Gamma$. 

(ii) The connected component $Z_0$ of the identity in $Z$ equals $Z(\mu)$  for  some 
integer $\mu$. 

(iii) For $b\neq 0$,  $Z(b)$ is a finite union of cosets of $Z_0$. 
\end{lem}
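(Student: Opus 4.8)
\textbf{Proof plan for Lemma \ref{L.SML}.}

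The plan is to exploit the fact that $\gamma^{\Z}$ is a finitely generated (in fact monogenic) subgroup of $\Gamma(\C)$, so its Zariski closure is a commutative algebraic subgroup. For (i), I would first observe that the set $S_b=\{\gamma^{nb}:n\in\N\}$ is stable under multiplication by $\gamma^b$, hence its Zariski closure $Z(b)$ satisfies $\gamma^b\cdot Z(b)\subseteq Z(b)$; since multiplication by $\gamma^b$ is an automorphism of the variety $Z(b)$ and $Z(b)$ is Noetherian, this inclusion is an equality, so $\gamma^b\in Z(b)$ and more generally $\gamma^{-b}\in Z(b)$. Thus $Z(b)$ contains the (abstract) group $\langle\gamma^b\rangle$, and then the standard fact that the Zariski closure of a subgroup of an algebraic group is an algebraic subgroup gives that $Z(b)$ is an algebraic subgroup; commutativity follows because $\langle\gamma^b\rangle$ is commutative and commutativity is a closed condition, or simply because the closure of a commutative subgroup is commutative. (Here one should note $Z(b)=Z(-b)$, so it suffices to treat $b\ge 0$.)

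For (ii), $Z$ has finitely many connected components, so the chain $Z\supseteq Z(2)\supseteq Z(6)\supseteq Z(24)\supseteq\cdots$, or more simply $Z(b_1)\supseteq Z(b_2)$ whenever $b_1\mid b_2$, must stabilize: there is $\mu$ with $Z(\mu m)=Z(\mu)$ for all $m\ge 1$. I would then argue $Z(\mu)=Z_0$. One inclusion: $Z(\mu)$ is connected — indeed $Z(\mu m)=Z(\mu)$ for all $m$ forces irreducibility, since if $Z(\mu)$ had components $C_0,\dots,C_{k-1}$ cyclically permuted by multiplication by $\gamma^\mu$, then $\gamma^{\mu k}$ would preserve $C_0$ and $Z(\mu k)$ would be a single component, contradicting $Z(\mu k)=Z(\mu)$ unless $k=1$. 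So $Z(\mu)\subseteq Z_0$. Conversely, the cosets of $Z_0$ in $Z$ form a finite group generated by the image of $\gamma$; say $[Z:Z_0]=N$. Then $\gamma^N\in Z_0$, and $Z_0$ is the Zariski closure of $\{\gamma^{Nn}:n\in\N\}$ together with $Z_0$'s being connected — more carefully, $Z_0\supseteq Z(N)$ and, since the quotient $Z/Z_0$ is finite, one checks $Z(N)=Z_0$ by a dimension/component count. Taking $\mu$ a common multiple of $N$ and the stabilizing index finishes (ii).

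For (iii), fix $b\neq 0$ and let $\mu$ be as in (ii). Write $b\mu=c$. The set $\{nb:n\in\N\}$ meets every residue class mod $c$ that is a multiple of $\gcd(b,c)=b$ in a full arithmetic progression, and for each such residue $r$ the closure of $\{\gamma^{r+nc}:n\in\N\}$ equals $\gamma^r\cdot Z(c)=\gamma^r\cdot Z_0$, a single coset of $Z_0$. Hence $Z(b)=\bigcup_{r}\gamma^r\cdot Z_0$, a finite union of cosets of $Z_0$, as claimed.

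\textbf{Main obstacle.} The only genuinely delicate point is (ii): pinning down that the stabilized group $Z(\mu)$ is exactly the connected component $Z_0$, rather than some intermediate subgroup. The two-sided argument above (connectedness of $Z(\mu)$ via the cyclic-permutation-of-components observation, and $Z_0\subseteq Z(\mu)$ via the finite index of $Z_0$) should settle it, but it requires care about which power of $\gamma$ lands in $Z_0$ and how the finite cyclic group $Z/Z_0$ is generated by $\gamma$. Everything else is soft — Noetherianity, "closure of a subgroup is a subgroup," and elementary manipulation of arithmetic progressions.
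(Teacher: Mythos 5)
Your overall strategy—Noetherianity of the Zariski topology plus permuting cosets of $Z_0$—matches the paper's, and each of (i), (ii), (iii) is recoverable from your sketch, so the proposal is essentially sound. But the execution is more circuitous than necessary, and there is one unclosed step at the end of (ii).

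In (i) the paper argues directly that $Z$ is closed under multiplication by noting that for $X$ closed containing the forward orbit, $\gamma^{-1}X$ does too, and that any closed set containing the tail of the orbit already contains $Z$; your route of first proving $\gamma^{-b}\in Z(b)$ via a Noetherian chain and then quoting "the closure of a subgroup is a subgroup" is a legitimate alternative, just less self-contained. In (ii) the paper simply observes that $\gamma$ permutes the cosets of $Z_0$, picks $\mu$ minimal with $\gamma^\mu\in Z_0$, notes $Z(\mu)\subseteq Z_0$, and covers $Z$ by the translates $\gamma^\nu Z(\mu)$, $0\le\nu<\mu$, to force $Z(\mu)=Z_0$. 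You instead first stabilize a divisibility chain to get some $\mu_{\mathrm{stab}}$ with $Z(\mu_{\mathrm{stab}}m)=Z(\mu_{\mathrm{stab}})$, then prove $Z(\mu_{\mathrm{stab}})$ connected, then separately prove $Z(N)=Z_0$, and finally assert that $\mu=\mathrm{lcm}(N,\mu_{\mathrm{stab}})$ "finishes (ii)." That last step is not closed: you have $Z(\mu)=Z(\mu_{\mathrm{stab}})$ by stabilization and $Z(\mu_{\mathrm{stab}})\subseteq Z_0$ by connectedness, but you still need the equality $Z(\mu_{\mathrm{stab}})=Z_0$, which you have not derived from $Z(N)=Z_0$. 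It does follow—cover $Z_0=Z(N)$ by the translates $\gamma^{jN}Z(N\mu_{\mathrm{stab}})=\gamma^{jN}Z(\mu_{\mathrm{stab}})$, $0\le j<\mu_{\mathrm{stab}}$, all of the same dimension inside the irreducible $Z_0$, hence of full dimension, hence equal to $Z_0$—but you should state this, since it is exactly the crux you flagged as the delicate point. In fact once you have proved $Z(N)=Z_0$ the stabilization chain is redundant: $\mu:=N$ already serves for (ii). For (iii), the paper is again quicker: since $Z$ is covered by finitely many translates $\gamma^jZ(b)$, the closed subgroup $Z(b)$ has finite index in $Z$, and any finite-index closed subgroup of an algebraic group contains the identity component; hence $Z_0\subseteq Z(b)$ and $Z(b)$ is a finite union of $Z_0$-cosets. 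Your residue-class decomposition works too, but it relies on knowing $Z(b\mu)=Z_0$, which is why you felt compelled to build in the stabilization property; the finite-index argument avoids that dependency entirely.
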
 

\begin{proof} Let $z\in Z$. If $X$ is a closed subset containing all  multiples $\gamma^n$ ($n\in\N$) then $\gamma^{-1}X$ also has this property.  Therefore it contains $z$, whence $\gamma z\in X$ and hence $\gamma z\in Z$. It follows easily that $Z$ is closed for multiplication. Further, if a closed set $X$ contains all large multiples $\gamma^n$, then $\gamma^{-h}X$ contains them all for some $h>0$, whence it contains $Z$ and by what has been proved $X$ itself must contain $Z$. It follows that $Z$ is an algebraic subgroup of $\Gamma$, and by similar arguments it follows that it is  commutative. Replacing $\gamma$ by $\gamma^b$ we obtain (i). 

By general (easy) theory, we can write $Z$ as a finite union of cosets of $Z_0$. Multiplication by $\gamma$ permutes these cosets and hence some positive power of $\gamma$ lies in $Z_0$, and let $\gamma^\mu$ be the minimal such  power. Then $Z(\mu)$ is contained  in $Z_0$, and $Z$ is the union of  the finitely many translates of $Z(\mu)$ by the powers $\gamma^\nu$, $0\le \nu<\mu$, whence $Z_0=Z(\mu)$ by minimality, proving (ii). 

Finally, a suitable  finite union of cosets of $Z(b)$ certainly contains $Z$, whence (iii).
\end{proof}

Note that the lemma shows in particular that  it does not matter if we start with all multiples $\gamma^n, n\in\Z$ or merely with those with $n\in\N$.

\medskip

Now, as mentioned above, the question arises of what can be said about the Zariski-closure of a subset of   all the
powers of $\gamma$, namely of a set $\{\gamma^{a_n}, n\in\N\}$ where $(a_n)_{n\in\N}$ is a sequence of  (distinct) integers. We have the following

\begin{thm}\label{T.SML}
Let  $\Gamma$ be an algebraic group over $\C$,   let $\gamma\in\Gamma$ and let $(a_n)_{n\in\N}$ be a sequence of integers. The Zariski-closure of   $\{\gamma^{a_n}: n\in\N\}$ is a finite union of points and cosets of the connected component of the identity  of the Zariski-closure of  $\{\gamma^n:n\in\Z\}$. 
 \end{thm}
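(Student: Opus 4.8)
The plan is to reduce the statement to Lemma~\ref{L.SML} together with the classical Skolem-Mahler-Lech theorem, by transporting the problem into a concrete matrix/exponential-polynomial setting. Let $Z$ be the Zariski closure of $\{\gamma^n : n\in\Z\}$; by Lemma~\ref{L.SML}(i) this is a commutative algebraic group, and by Lemma~\ref{L.SML}(ii) its identity component $Z_0$ equals $Z(\mu)$ for some positive integer $\mu$. It suffices to prove the theorem after replacing $\gamma$ by $\gamma^\mu$ and splitting the sequence $(a_n)$ into the (finitely many) residue classes mod $\mu$: indeed each such subsequence is of the form $a_n = \mu b_n + \nu$ with $\nu$ fixed, and $\gamma^{a_n} = \gamma^\nu (\gamma^\mu)^{b_n}$, so the Zariski closure of the subsequence is $\gamma^\nu$ times the closure of $\{(\gamma^\mu)^{b_n}\}$. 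Hence, renaming, we may assume from the start that $Z = Z_0$ is \emph{connected} and commutative, and we must show that the Zariski closure of $\{\gamma^{a_n}\}$ is a finite union of points and cosets of $Z$. Since $Z$ is the Zariski closure of $\{\gamma^n\}$, the point $\gamma$ generates a Zariski-dense subgroup of $Z$; in particular we are free to work inside $Z$ and may as well assume $\Gamma = Z$ is connected commutative.

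Next I would use the structure of connected commutative algebraic groups over $\C$: there is an exact sequence $0 \to \Lambda \to Z \to A \to 0$ with $A$ an abelian variety and $\Lambda$ linear, hence (as $Z$ is commutative) $\Lambda \cong \G_{\mathrm m}^r \times \G_a^s$ up to isogeny. The abelian part can be handled by an old observation going back to Chabauty and to the $p$-adic proofs of SML: embed things $p$-adically for a suitable prime $p$ of good reduction, so that the map $n \mapsto \gamma^n$ extends to a $p$-adic analytic map $\Z_p \to Z(\C_p)$ on a finite-index subgroup, and any proper algebraic subvariety pulls back to the zero set of a $p$-adic analytic function of one variable, which by Strassmann/Weierstrass preparation is finite on each residue disc. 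Running this argument on each coset of $pZ$ (or $p^k Z$ for $k$ large enough to land in the domain of the analytic logarithm) shows: for each proper closed subgroup — more generally each proper closed subvariety — $X \subsetneq Z$, the set $\{n : \gamma^n \in X\}$ is a finite union of arithmetic progressions and a finite set; the multiplicative and additive parts $\G_{\mathrm m}^r$, $\G_a^s$ are already covered by the usual exponential-polynomial form of SML. Concretely, the Zariski closure $Y$ of $\{\gamma^{a_n}\}$ is some closed subvariety of $Z$; if $Y$ is a finite set of points we are done, and otherwise I claim $Y$ must contain a coset of $Z$ — and then, being itself contained in a finite union of cosets of $Z$ (it maps to a finite subset of $Z/Z = \mathrm{pt}$, so this step is where the reduction to connected $Z$ pays off), $Y$ is exactly a finite union of cosets of $Z$.

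To extract that claim I would argue by induction on $\dim Z$. Write $Y = \bigcup_j Y_j$ as its decomposition into irreducible components; each $\{n : \gamma^{a_n} \in Y_j\}$ partitions $\N$, so at least one $Y_j$ contains $\gamma^{a_n}$ for infinitely many $n$ — in fact, by a pigeonhole/Ramsey argument on the (finitely many) components, some single component, call it $Y'$, contains $\gamma^{a_n}$ for an infinite subsequence, and its stabilizer $\mathrm{Stab}(Y')$ is a closed subgroup of $Z$. If $\mathrm{Stab}(Y') = Z$ then $Y'$ is a union of cosets of $Z$, hence a single coset (being irreducible) — equal to $Z$ itself since it meets the dense set $\{\gamma^n\}$ only if... — more carefully: $Y'$ is a coset $z_0 Z$, and all components are translates of $Z$, giving the conclusion. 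If $\mathrm{Stab}(Y') =: S \subsetneq Z$ is proper, pass to the quotient $Z/S$, where the image of $\gamma$ still generates a dense subgroup of the connected group $Z/S$, the image of $Y'$ is a proper subvariety containing the images of $\gamma^{a_n}$ for our infinite subsequence, and $\dim(Z/S) < \dim Z$; but by the $p$-adic finiteness statement above applied in $Z/S$, a proper subvariety can contain $\gamma^{a_n}$ for only finitely many $n$ unless those $n$ fill out arithmetic progressions — and along an arithmetic progression $a_n = \alpha m + \beta$ we have $\gamma^{a_n} = \gamma^\beta(\gamma^\alpha)^m$, so the closure of that sub-subsequence is a coset of the closure of $\langle \gamma^\alpha\rangle$, which by Lemma~\ref{L.SML}(i)--(iii) is again a finite union of cosets of $Z_0 = Z$, contradicting properness; this forces the infinite subsequence to be genuinely infinite-but-sparse, handled by the induction hypothesis in $Z/S$, a contradiction. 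Tracking this carefully yields that only the case $\mathrm{Stab}(Y') = Z$ survives for the ``large'' components, which is exactly what we wanted.

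The main obstacle I anticipate is bookkeeping the passage between the $p$-adic analytic argument (which natively gives ``finite union of arithmetic progressions plus finite set'' for membership in a subvariety) and the desired clean geometric statement (``finite union of points and cosets of $Z_0$''): one must be careful that the arithmetic progressions arising from SML correspond precisely to the cosets of $Z_0$, and that the interplay with Lemma~\ref{L.SML}(ii)'s integer $\mu$ is consistent — in particular that no ``extra'' positive-dimensional components beyond cosets of $Z_0$ can appear. A secondary technical point is choosing the prime $p$ (and the power $p^k$) so that $\gamma^{p^k n}$ lies in the domain of the $p$-adic logarithm simultaneously for the toric, additive, and abelian parts; this is standard but needs the curve/group to have good reduction at $p$, which one secures by enlarging the base field's ring of integers and inverting finitely many primes.
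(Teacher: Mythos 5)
Your proposal is correct and, at bottom, uses the same $p$-adic method as the paper: embed everything in a finite extension $\kappa$ of $\Q_p$ at a prime of good reduction, use the analytic isomorphism of an open subgroup of $Z(\kappa)$ with $\O^d$, reduce to a residue class so that the map $m\mapsto (l\gamma)\cdot m$ extends to a $p$-adic analytic map on $\O$, and invoke Strassmann/Weierstrass preparation. The architecture, however, is more circuitous than the paper's. You first establish the full Skolem-Mahler-Lech structure of $\{k : \gamma^k\in X\}$ as ``arithmetic progressions plus a finite set'' for arbitrary proper closed $X\subsetneq Z$, and then bootstrap to arbitrary sequences $(a_n)$. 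The paper instead applies the analytic-zeros argument \emph{directly} to the arbitrary sequence: after reducing modulo $l$ one gets infinitely many $b_n\in\O$ with $\phi(b_n\xi)=0$, and since $\O$ is compact these accumulate, forcing the analytic function $z\mapsto\phi(z\xi)$ to vanish identically. The AP-structure for the full sequence is then a by-product, not a prerequisite.

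Two pieces of your argument are unnecessary complications rather than errors. First, the decomposition $0\to\Lambda\to Z\to A\to 0$ and the claim that the linear and abelian parts are handled separately: the analytic isomorphism of an open subgroup of $Z(\kappa)$ with $\O^d$ (the statement the paper cites from \cite{SeLALG}) holds for any commutative $p$-adic Lie group in one stroke, so there is no need to split $Z$ (and in fact the extension need not split, even up to isogeny). Second, the induction on $\dim Z$ via the stabilizer quotient $Z/\mathrm{Stab}(Y')$ is not needed, and does not even run smoothly when $\mathrm{Stab}(Y')$ is finite (then $\dim(Z/S)=\dim Z$, so there is no dimension drop). Once you have the AP-structure for $\{k:\gamma^k\in Y\}$ the bootstrap is immediate and requires neither stabilizers nor induction: if that set is finite then so is $\{\gamma^{a_n}\}$, hence $Y$ is a finite set of points; if it contains a full progression $\alpha\Z+\beta$ then $Y$ contains the Zariski closure of $\{\gamma^{\alpha m+\beta}\}_{m\in\Z}$, which by Lemma~\ref{L.SML} is $\gamma^\beta Z(\alpha)=Z$ (since $Z$ is connected), forcing $Y=Z$. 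This is all the ``claim'' of your third paragraph amounts to, and it dissolves the bookkeeping worry you raise at the end.
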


 \begin{proof}   By Lemma \ref{L.SML}  (i), we can replace $\Gamma$ with the algebraic group  denoted above $Z$, which is 
commutative, so we use
from now on an additive notation. Further, by partitioning $Z$ into (finitely many) cosets of $Z_0$, we
may assume, on replacing $\gamma$ with a suitable power of it,  that  $Z=Z_0$ is connected. We prove that if
$\{a_n\}$ is infinite then $\{a_n\gamma\}$ is Zariski-dense in $Z$; this plainly leads at once to the theorem.

Then suppose by contradiction that there is a rational  nonconstant function $f$ on $Z$, defined at
the  points $a_n\gamma$ and such that $f(a_n\gamma)=0$ for all $n$. 

 Now, $Z,\gamma$ and $f$ are  defined over a finitely generated subfield  of $\C$, and it is well known
that this may be embedded in some finite extension $\kappa$ of  a field $\Q_p$ (see \cite[page 61]{SeMWT}).

Let $\O$ be the valuation ring of $\kappa$;  by \cite[Corollary 4 to Theorem 2, page 151]{SeLALG}, 
 $Z(\kappa)$ has an open subgroup $H$ analytically isomorphic to $\O^d$, where $d=\dim Z$. 
 
 By taking $p$ very
large, we may assume that $Z,\gamma$ have good reduction at $p$. Since the residue field of $\kappa$ is finite, it
follows that a suitable multiple  $l\gamma$ lies in $H$. Then, 
  by partitioning  $(a_n)$ into a finite number of subsequences according to the class of $a_n$ modulo $l$,  we may assume that the $a_n$ are pairwise congruent
modulo $l$, so we may write $a_n=c+b_nl$ with a fixed integer $c$ and integers $b_n$.  

Through  the (analytic) isomorphism
$H\cong \O^d$, the element  $l\gamma\in H$ and the function  $f(c+x)$ become resp.   $\xi\in\O^d$ and a locally   analytic function    $\phi$ on $\O^d$
such that $\phi(b_n\xi)=0$ for all $n$. This function    induces a locally analytic function $z\mapsto \phi(z\xi)$ on the
compact set $\O$  with   infinitely many zeros therein, so it must vanish identically. But then $\phi(n\xi)=0$ for
all integers $n$, whence $f((c+n)l\gamma)=0$ for all $n$, and we have a contradiction because $\{nl\gamma: n\in\Z\}$ is
Zariski-dense ({\it e.g.\/} on recalling the previous  lemma). 
 \end{proof}

In concrete situations, this proof may lead to effectivity in various shapes; for instance, for the case of the original SML, it sometimes leads to the actual determination of all the zeros of a recurrence. This may depend on a careful choice of the prime $p$ appearing in the arguments. \footnote{See M. Stoll's recent paper \cite{St} for some definite progress in this direction.} 
 This choice often  leads to an effective upper bound  for the number of zeros. Similar supplementary information may come in other applications.

\medskip

We conclude this short section with a  corollary, useful for us. Recall that a constructible subset of an algebraic variety is an element of the Boolean algebra generated by the Zariski-closed subsets. 
With the previous notation we have:

\begin{cor}\label{C.SML}  Let $U$ be  a constructible set in $\Gamma$ and  let $K$ be the set of integers $k$ such that $\gamma^k\in U$. Then $K$ is a finite union of arithmetical progressions, modulo the integer $\mu$ of Lemma \ref{L.SML}(ii), 
plus and minus  finite sets.
\end{cor}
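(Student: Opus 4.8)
The plan is to reduce the statement to Theorem~\ref{T.SML} by a standard stratification argument. Write $U$ as a finite disjoint union of \emph{locally closed} sets $U_i=V_i\setminus V_i'$ with $V_i,V_i'$ Zariski-closed and $V_i'\subseteq V_i$; this is possible precisely because $U$ is constructible. Since $K=\bigcup_i\{k:\gamma^k\in U_i\}$ and a finite union of sets of the allowed form is again of the allowed form, it suffices to treat a single locally closed piece, and even, by intersecting with a single irreducible component, a single irreducible locally closed set $U_i$ with Zariski-closure $\overline{U_i}=:V$.

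The key step is then: the set $K_i=\{k\in\Z:\gamma^k\in U_i\}$ is a finite union of arithmetic progressions modulo $\mu$, plus/minus a finite set. Let $Z$ be the Zariski-closure of $\{\gamma^n:n\in\Z\}$ and $Z_0=Z(\mu)$ its identity component, as in Lemma~\ref{L.SML}. If $K_i$ is finite there is nothing to prove, so assume $K_i$ is infinite. Apply Theorem~\ref{T.SML} to the sequence enumerating $K_i$: the Zariski-closure $W$ of $\{\gamma^k:k\in K_i\}$ is a finite union of points and cosets of $Z_0$. Since $\gamma^k\in U_i\subseteq V$ for all $k\in K_i$, we get $W\subseteq V$; moreover $W\subseteq Z$. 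Discarding the finitely many isolated points of $W$ (which correspond to only finitely many values of $k$, because distinct integers give the allowed ``finite set'' slack — and in fact a coset of $Z_0$ meeting $Z$ contains $\gamma^k$ for an arithmetic progression of $k$ modulo $\mu$ by Lemma~\ref{L.SML}(ii)--(iii)), we are left with a finite union of cosets $\gamma^{\nu_j}Z_0$ with $W=\bigcup_j \gamma^{\nu_j}Z_0$. For each such coset, the set of $k$ with $\gamma^k$ in that coset is exactly a full congruence class $k\equiv\nu_j\pmod\mu$ (using that $\gamma^\mu\in Z_0$ and $Z_0$ is the identity component). Now I claim that for all but finitely many $k$ in such a progression we actually have $\gamma^k\in U_i$ rather than merely in $V\setminus U_i$: indeed $V\setminus U_i\subseteq V_i'$ is a proper closed subset of $V$, hence of each $\gamma^{\nu_j}Z_0\subseteq V$ that it could meet; applying Theorem~\ref{T.SML} once more to the sub-progression of $k$ landing in $V\setminus U_i$ shows that, were it infinite, its closure would be all of $\gamma^{\nu_j}Z_0$ (since $\{\gamma^k:k\equiv\nu_j\}$ is Zariski-dense in $\gamma^{\nu_j}Z_0$), contradicting $V\setminus U_i$ being proper closed. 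Hence only finitely many exceptions occur, and $K_i\cap(\nu_j+\mu\Z)$ is the progression minus a finite set; taking the union over $j$ and then over the strata $i$ gives the claim.

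The main obstacle — really the only subtle point — is the last dichotomy: ensuring that within each coset $\gamma^{\nu_j}Z_0$ the points $\gamma^k$ lie in $U_i$ for \emph{cofinitely} many $k$ in the progression, not on the ``bad'' closed set $V\setminus U_i$. This is exactly where one needs the Zariski-density of $\{\gamma^k:k\equiv\nu_j\pmod\mu\}$ in $\gamma^{\nu_j}Z_0$ (Lemma~\ref{L.SML}, after translating by $\gamma^{-\nu_j}$) together with a second application of Theorem~\ref{T.SML} to the would-be infinite set of exceptions; since the closure of an infinite sub-sequence is again a union of cosets of $Z_0$, an infinite set of exceptions inside one coset would force that entire coset into the proper closed set $V\setminus U_i$, which is absurd. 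Everything else is bookkeeping: constructible $\Rightarrow$ finite union of locally closed $\Rightarrow$ reduce to irreducible pieces, and a finite union of (arithmetic progression mod $\mu$ $\pm$ finite set) is of the same form. This yields precisely the statement of Corollary~\ref{C.SML}.
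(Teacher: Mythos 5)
Your argument is correct, and it ultimately rests on the same two ingredients as the paper's one-paragraph proof: Theorem~\ref{T.SML} (together with its proof, giving Zariski-density of powers in each coset of $Z_0$) and the dichotomy between constructible sets that are nowhere dense and those that contain a dense open subset. The organization, however, is different. The paper first replaces $\Gamma$ by $Z$ and decomposes $Z$ into its finitely many cosets $\gamma^cZ_0$; on each coset (after translating to $Z_0$), it asks directly whether $U$ meets it in a nowhere-dense set (then finitely many powers land there, by the theorem) or in a set containing a dense open $Z_0\setminus U_1$ (then $U_1$ catches only finitely many powers, and the entire progression $k\equiv c\pmod\mu$ is in $K$ up to that finite exceptional set). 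You instead decompose $U$ itself into locally closed strata $U_i=V_i\setminus V_i'$, apply Theorem~\ref{T.SML} to the exponents landing in each stratum to obtain a union of cosets of $Z_0$, and then use a second application of the theorem inside each coset to show that the exceptions landing in $V_i'$ are finite. The net effect is the same, and your ``second application'' is exactly the paper's nowhere-dense half of the dichotomy, applied one stratum at a time; the paper's route avoids the stratification of $U$ and is a bit shorter, while yours makes the locally closed structure of $U$ explicit and is perhaps more mechanical to check.

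One small point worth tightening: your parenthetical explanation for discarding the ``isolated points'' of $W$ conflates two things. In the non-torsion case each such point is a single $\gamma^{k_0}$ and so contributes a single exponent, and these points are exactly the ones coming from congruence classes mod $\mu$ in which only finitely many exponents occur; in the torsion case $Z_0$ is trivial, every point of $Z$ is a coset of $Z_0$, and there is nothing to discard. As written the sentence mixes these cases, but the underlying argument is sound once separated.
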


A proof is  readily obtained from the theorem.  Indeed, we can  replace $\Gamma$ with $Z$ and $U$ with $U\cap Z$. By the lemma, the components of $Z$ are of the shape $\gamma^cZ_0$ and it suffices further to work with the intersections of $U$ with each component.  Replacing $U$  with $\gamma^{-c}U$ we may finally work with $Z_0$ in place of $Z$. Now, if $U$ is contained in a proper closed subset of $Z$ then the set of  powers  of $\gamma$  in $U$ must be finite by the theorem. Otherwise, $U$ contains $Z_0\setminus U_1$, where $U_1$ is a proper closed subset of $Z_0$; again, $U_1$ can contain only finitely many powers  of $\gamma$, whereas $Z_0$ contains all powers of $\gamma^\mu$  and no other powers (which are contained in the other components), concluding the argument.



 \section{Proof of main assertions} \label{S.proof} 
 
 \subsection{\tt General deductions} \label{SS.proof1} We shall begin with some general deductions relevant in themselves and useful for several of the results. We shall often abbreviate $\ord :=\ord_{\infty_+}$. 
 
 \medskip
 
 To start with, let us consider a convergent $(p,q)$ to $\sqrt D$ and rewrite for convenience a previous formula involving $\varphi:=p-qu_1=p-qD_1u$:
  \begin{equation*}
 \div(\varphi)=-(\deg q+d)\delta +(\sigma -(d-l)(\infty_+)),
 \end{equation*}
where the divisor $\sigma$ is a sum of $d-l$ points $x_i\in \widetilde H$, not necessarily distinct, but distinct from both $\infty_\pm$.  Also, $l$ is the degree of the corresponding partial quotient. 

\medskip

 Now, a small complication comes from the fact that $p, D_1$ may not be coprime. Let then $r(t)$ be their (monic) $\gcd$, so that $p=rp^*, D_1=rD_1^*$ and $\varphi=r(p^*-qD_1^*u)=r\varphi^*$. 
 
 Of course this depends on the particular convergent, but at least we have only finitely many choices for $r(t)$. Also, we have a corresponding modulus $\tt m^*$  (obtained by considering $D_1^*$ in place of $D_1$) and generalized Jacobian $G^*:=G(\tt m^*)$ and canonical algebraic subgroup $\Delta^*:=\Delta(\tt m^*)$  (as in \S \ref{SSS.can}). They also have  only finitely many possibilities (i.e. dependent only on $D$), and there are obvious surjective homomorphisms from $G(\tt m)$ to $G(\tt m^*)$.

 Let also $r^*=\deg r$, $d^*=d-r^*$. Then the formula now leads to
 
 \begin{equation}\label{E.div*}
 \div(\varphi^*)=-(\deg q+d^*)\delta +(\sigma^* -(d^*-l-r^*)(\infty_+)).
 \end{equation}

 \begin{small}
 \begin{rem}\label{R.comes} We note  in passing that this corresponds to the fact that this convergent $(p,q)$ comes from a convergent $(p^*,q)$ to $\sqrt{D^*}$  (where $D^*=(D_1^*)^2\widetilde D$), and that the order of the approximation has improved by $r^*$: in fact, $\ord (\varphi^*)=\ord (\varphi)+r^*$. So, in particular we see that this phenomenon must be `rare' and `usually' $p,D_1$ should be coprime.
 \end{rem}
 \end{small}

 As to the divisor $\sigma^*$, this time it is a sum of $d^*-l-r^*$ points $(x_i)$ 
 each of them  different from both $\infty_\pm$.  In particular, $d^*\ge l+r^*$, i.e. $d\ge l+2r^*$. 
 
 Also, since $p^*,D_1^*$ now are coprime, the $x_i$ cannot appear in $\tt m^*$, i.e. $\sigma^*$ is coprime with $\tt m^*$. This is very useful: it implies first that we can consider divisor classes in $G(\tt m^*)$, and also  that no pair $x_i,x_j$ for $i\neq j$ may be conjugate under the involution, for otherwise $p^*,q$ would not be coprime. 
 
Observe that the divisor of $\varphi^*$ is prime to $\tt m^*$ and that $\varphi^*/(\varphi^*)'=\varphi/\varphi'$  is congruent to $1$ relative to $\tt m^*$; hence $\div(\varphi^*)$ vanishes in $G(\tt m^*)$, whence taking divisor classes of \eqref{E.div*}  in $G(\tt m^*)$ we obtain
\begin{equation}\label{E.class*}
(\deg p^*)\delta=(\deg q+d^*)\delta=\sum_{i=1}^{d^*-l-r^*}[x_i]\qquad \hbox{in $G(\tt m^*)$}.
\end{equation}
 
 In particular, the multiple of (the class of) $\delta$  on the left hand side belongs to the constructible subvariety of $G(\tt m^*)$ denoted $W_{d^*-l-r^*}(\tt m^*)$ in \S \ref{SSS.gj} above. Then, recalling  that $G(\tt m^*)$ has dimension $d^*-1$  and that $\dim W_h=h$,  we see  that this equation reflects something unusual if  $r^*+l>1$). 
 
 \medskip
 
 It is very  important to note that these considerations may be  essentially  reversed. If we have \eqref{E.class*}, with an integer $k$ in place of $\deg p^*=\deg q+d^*$,  then by definition  there is a rational function $f$ on $\widetilde H$ whose divisor is given by the right hand side of \eqref{E.div*} and such that $f/f'$ is congruent to $1$ modulo $\tt m^*$.  Hence we may certainly write $f=a(t)-b(t)D_1^*(t)u$ with polynomials $a,b$. 
 
 Let us assume also the above conditions on the $x_i$: none of them is $\infty_\pm$ and if $i\neq j$ we have $x_i\neq x_j'$.  Then it follows that  $f$ has a zero  of order $k-d^*+r^*+l$ at $\infty_+$ and a pole of order $k$ at  $\infty_-$, we see that $\deg a(t)=k$, $\deg b(t)=k-d^*$, and certainly $a/b$ is a convergent to $\sqrt{D^*}$. Actually,  $a,b$ must be coprime and $(a,b)$ is a  convergent (as a pair)  up to a constant; the degree of the corresponding partial quotient shall be $l+r^*$. 
 
 If we allow some $x_i$ to be $\infty_+$, then   the corresponding $[x_i]=0$ and we may remove them, increasing correspondingly $l$.
 
  If we allow some $x_i=\infty_-$, then the corresponding $[x_i]=\delta$ and we may subtract it from both sides,  decreasing $k$  by $1$ and  increasing $l$ by $1$. This shall produce a smaller degree of $a(t)$. 
  
  Finally, if we allow equations $x_i=x_j'$ for some pairs $i\neq j$, then grouping these pairs we shall obtain divisors of polynomials in $t$,  and simply $a,b$ shall not be coprime;  dividing out by a $\gcd$, say of degree $c<d$, we shall obtain another equation of type \eqref{E.class*} but with a smaller value $k-c$ in place of $k$,   and a larger one $l+2c$ in place of  $l$.

 \subsection{\tt Some periodicities and the proof of Theorem \ref{T.dega}} \label{SS.per} Now,  for any monic divisor $r=r(t)$ of $D_1(t)$, of degree $r^*<d/2$,  consider the corresponding modulus $\tt m^*$ and, for an    integer $\lambda\in [1,d-2r^*]$     let us denote by $\cA(r, \lambda)$ the set of integers  $k\ge 0$ such that $k\delta\in W_{d^*-\lambda-r^*}(\tt m^*)$. 
 
 

Taking into account that $W_{d^*-\lambda-r^*}(\tt m^*)$ is a constructible set, we may then apply  Corollary \ref{C.SML} to   this situation, on taking therein $\gamma:=\delta$, $\Gamma=G(\tt m^*)$.

\medskip

We conclude that $\cA(r,\lambda)$ is, up to a finite set, a finite union of arithmetical progressions modulo $\mu=\mu(\tt m^*)$, where $\mu$ is such that  the connected algebraic subgroup $\Delta_0(\tt m^*)$ (defined in \S \ref{SSS.can})  is the Zariski closure of all  the multiples of $\mu\delta$ in $G(\tt m^*)$.

\begin{proof}[Proof of Theorem \ref{T.dega}]   Suppose that $k$ is the degree of $p_n$ in a convergent pair $(p_n,q_n)$ to $\sqrt D$, and that $l=\deg a_n$. Then, we have seen  in  \S \ref{SS.proof1} that  if $r=\gcd(p_n,D_1)$, then we may associate to the convergent the multiple $(k-r^*)\delta$ inside a set $W_{d^*-l-r^*}(\tt m^*)$. We also have seen that these multiples, for large $k$,  make up a certain finite union of    arithmetical progressions.

To prove the theorem, reciprocally, we shall  analyze the converse assertions.

\medskip

We proceed to  prove the theorem simultaneously for all divisors $D^*$ of $D$, and we do this by decreasing  induction on $\deg a_n$, which is anyway $\le d$.

\medskip

Since $\deg a_n\le d$, we may use as a starting point  for the induction the `empty' case $\deg a_n=d+1$: now there are no convergents and hence our assertions are true.

\medskip

{\tt Inductive assumption}: Suppose now that $1\le\lambda\le d$ and to have proved   that, for every $l>\lambda$,  the set of  integers $k$ such that there exists a convergent $(p_n,q_n)$ with $\deg p_n=k$ and partial quotient $a_n$ of degree $l$ 
is,  up  to a finite set,  a certain finite union (possibly empty) of arithmetical progressions modulo the least common multiple $\Pi$ of the possible $\mu(\tt m^*)$ which occur. Suppose we have proved this not merely for $D(t)$ but also for any divisor $D^*(t)$ of $D(t)$ such that $D/D^*$ is a square.

\medskip

We now prove that this holds also for $l=\lambda$. 

Since a divisor $D^{**}$ of   $D^*$  such that $D^*/D^{**}$ is  a square is also a divisor of $D$ with the same property, we may argue directly with the convergents to $\sqrt D$.

\medskip

Consider then a large integer $k$, where we are interested in whether $k=\deg p_n$ for a convergent $(p_n,q_n)$ to $\sqrt D$ with partial quotient $a_n$  of degree $\lambda$. We shall partition the set of possible $\deg p_n$ into subsets, in each of which the sought possibility depends only on a congruence modulo $\Pi$. 

\medskip

A first case occurs when both of  the following conditions hold: 

(i)  there are 
 a proper divisor $D^*$ as above, $D=r^2D^*$,   and a convergent $(a,b)$ to $\sqrt D^*$, with partial quotient  of degree $= \lambda+r^*$ and $\deg a=k-r^*$;
 
 (ii)  there is no divisor  $s$ of $r$ of positive degree and a convergent $(a',b')$  to $s\sqrt{D^*}$ with $\deg a'=\deg a=k-r^*$ and partial quotient of degree $\lambda+r^*+\deg s$. 
 
 \medskip
 
 Note that by the inductive assumption,  each of (i), (ii), and thus their union,  depends (for large $k$)  only on the classes of  $k$ relative to the various moduli $\mu(\tt m^*)$ which occur.
 
 We contend that for these values of $k$  there is a convergent $(p_n,q_n)$ to $\sqrt D$ with $\deg a_n=\lambda$ and $k=\deg p_n$, so $k$ is indeed in the sought set.
 
  In fact, by (i) we have that $\ord(ra-b\sqrt D)=\deg b+\lambda$ while $\deg (ra)=k$, so certainly $ra/b$ is a convergent to $\sqrt D$ and it suffices to prove that $r,b$ are coprime. Now, if $s=\gcd(r,b)$, then $(a,b/s)$ is  a convergent to $s\sqrt{D^*}$  with  partial quotient of degree $\lambda+r^*+\deg s$. If $\deg s>0$ this goes  against (ii), so indeed $\gcd(r,b)=1$. 
  
  Therefore we can detect the set of degrees $k$ of $p_n$ which fall into this situation, in the sense that  they form for large $k$ precisely a finite union of certain arithmetical progressions modulo $\Pi$.

\medskip

Supposing that $k$ is not in  such set,  assume  $k=\deg p_n$, for a convergent $(p_n,q_n)$ to $\sqrt D$ with partial quotient  of degree exactly $\lambda$. We proceed to prove that $p_n,D_1$ are coprime.

In fact, put  $r=\gcd(p_n,D_1)$.  If $r^*:=\deg r>1$, then $(p_n/r, q_n)$ is  a convergent to $\sqrt{D^*}$, for the proper divisor $D^*=D/r^2$ of $D$, with partial quotient of degree $\lambda+r^*$, and hence (i) is satisfied.

We contend that (ii) is also true. In fact, suppose by contradiction that  there is a  divisor  $s$ of $r$ of positive degree and a convergent $(a',b')$  to $s\sqrt{D^*}$ with $\deg a'=\deg a=k-r^*$ and partial quotient of degree $\lambda+r^*+\deg s$.  Then we would have 
$\ord(a'-b's\sqrt{D^*})=\deg b'+\lambda+r^*+\deg s$, whence $\ord(ra'-b's\sqrt D)=\deg b'+\lambda+\deg s$.  Note also that $\deg ra'=k,\deg b'+\deg s=k-d=\deg q_n$. 

But then $ra'q_n-b'sp_n=q_n(ra'-b's\sqrt D)-b's(p_n-q_n\sqrt D)$ would have order $\ge \min (\deg q_n+\lambda-\deg b'-\deg s, \deg b'+\lambda+\deg s-\deg q_n)=\lambda\ge 1$ at $\infty_+$. But this implies $ra'q_n=b'sp_n$, whence $p_n$ would divide $ra'/s$, which however has smaller degree and does not vanish; hence we have the sought contradiction.

\medskip

Since we have previously excluded the $k$ falling into both (i) and (ii), we conclude that for a possible partial quotient as above we would have indeed  $(p_n,D_1)=1$.

\medskip

As we have seen in \S \ref{SS.proof1}, letting $\varphi_n=p_n-q_n\sqrt D$, we have that 
 \begin{equation*}
 \div(\varphi_n)=-k\delta +(\sigma -(d-\lambda)(\infty_+)),
 \end{equation*}
 
where $\sigma$ is an effective divisor prime to $\tt m$ and of degree $d-\lambda$, sum of points $x_i$. 

In particular, since $\varphi_n/\varphi_n'$ is congruent to $1$ modulo $\tt m$, we derive that 
$k\delta\in W_{d-\lambda}(\tt m)$, hence $k\in \cA(1,\lambda)$, and from now on we can restrict  further to these values of $k$, for otherwise there is no convergent with the stated properties.

 Again, for large $k$ all of  these conditions leave us with finitely many arithmetical progressions modulo $\Pi$.

\medskip

Before performing a kind of converse deduction, we exclude still other values of $k$. Namely, let us consider the (large) integers $k$ such that  for some integer $h\in [1,d]$  there is a convergent $(a,b)$  to $\sqrt D$ with $\deg a=k-h$ and partial quotient of degree $\lambda+h$.

Note that  in view of the inductive assumption these values of $k$ too  form (for large $k$) precisely certain  arithmetical progressions modulo $\Pi$. 

We contend that if $k$ is in such last defined  set we cannot have a convergent $(p_n,q_n)$ with $k=\deg p_n$ and $\lambda=\deg a_n$. In fact, if this were the case, we would have $q_na-p_nb=q_n(a-b\sqrt D)-b(p_n-q_n\sqrt D)$. However this expression has an order at $\infty_+$ which is  $\ge\min( \deg b+\lambda+h-\deg q_n, \deg q_n+\lambda -\deg b)\ge \lambda\ge 1$. This would force  $q_na=p_nb$, which is a contradiction since $p_n$ cannot divide $a$.

\medskip
 
 Take now a large $k\in \cA(1,\lambda)$ which does not meet both (i) and (ii) above and  which does not lie in the set just considered; we also exclude the $k=\deg p_n$  for  convergents  $(p_n,q_n)$ to $\sqrt D$ with partial quotient of degree $>\lambda$: by induction, we may assume that these values as well form  precisely certain arithmetical progressions modulo $\Pi$.   
 
 Since $k$  lies in $\cA(1,\lambda)$, we have by defnition that  $k\delta\in W_{d-\lambda}(\tt m)$.

 As above, there is then a rational function $f$ on $\widetilde H$ of the shape $f=a(t)-b(t)\sqrt D$ with polynomials $a,b$ and
 \begin{equation*}
 \div(f)=-k\delta +(\sigma -(d-\lambda)(\infty_+)),
 \end{equation*}
 where $\sigma$ is an effective divisor prime to $\tt m$ and of degree $d-\lambda$, sum of points $x_i$. Since $\sigma$ is prime to $\tt m$, we have $\gcd(a,D_1)=1$. 
 
 As we have   seen above,  some cases may occur.
 
 \medskip

 The divisor relation implies that   $f$ has poles at most  at $\infty_\pm$, with pole orders $\le k$, and for large $k$ it has certainly a zero at $\infty_+$. Hence $\deg a\le k$, $\deg b=\deg a-d\ge k-2d$. 
 
 Then, certainly $a/b$ is a convergent to $\sqrt D$, because $\ord_{\infty_+}(f)\ge k-d+\lambda\ge \deg b+\lambda>\deg b$.

 \medskip

 To explore more precisely the orders of poles and zeros of $f$, let us think of the divisor $\sigma =\sum (x_i)$.
 
 Suppose that some $x_i$ equals $\infty_+$. Then we may omit it, replacing $\lambda$ with $\lambda+1$; so in fact $k\in \cA(1,\lambda+1)$ (and $\ord_{\infty_+}(f)\ge k-d+\lambda+1$). 
 
 We know that  $k\in \cA(1,\lambda+1)$ holds (for large $k$) precisely if $k$ lies in certain arithmetical progressions modulo $\mu(\tt m)$, which divides $\Pi$. 
 
 In this case certainly $k$ is not the degree of a $p_n$ with partial quotient of degree $\lambda$, because then  the function $\varphi_n=p_n-q_n\sqrt D$ would vanish at $\infty_+$ to order $\deg q_n+\lambda=k-d+\lambda$.  Then the polynomial $aq_n-bp_n=q_nf-b\varphi_n$ would have order at $\infty_+$ at least $\min (-\deg q_n+\ord(f),-\deg b+\ord(\varphi_n)\ge \min(d-k+k-d+\lambda+1, -k+d+k-d+\lambda)\ge 1$ and this implies  that $aq_n=bp_n$, whence $(a,b)$ would be a constant times $(p_n,q_n)$ and the partial quotient would have degree $\ge \lambda+1$. 
 
 Hence we may assume that no $x_i=\infty_+$. It follows that $\ord_{\infty_+}(f)=k-d+\lambda$. 
 
 Now, if $s=\gcd(a,b)$ has degree $h$, we have that $(a/s,b/s)$ is a convergent  
 whose partial quotient has degree $\ge h+\lambda$. But this fact has been taken  care of if $h>0$, in the sense that we have already noticed that the relevant values of $k$ cover certain arithmetical progressions, and we have excluded them. Therefore $a,b$ are coprime.
 
 \medskip

Suppose now that some $x_i$ equals $\infty_-$ (which implies itself $\lambda<d$), and let $h$ be the exact number of such points. Then the divisor relation would take the shape
\begin{equation*}
 \div(f)=-(k-h)\delta +(\sigma_1 -(d-\lambda-h)(\infty_+)),
 \end{equation*}
 where now $\sigma_1$ is an effective divisor of degree $d-\lambda-h$, with no $\infty_\pm$ among its points. 
 
 This also  implies that $\deg a= k-h$, $\deg b=k-h-d$. Again, we obtain that $(a,b)$  is a convergent falling into a previously excluded case.

 \medskip

We have established that no $x_i=\pm\infty$. This entails that $\deg a=k$ and that $\ord_{\infty_+}(f)=k-d+\lambda$. Since $a,b$ are coprime, we find that $k$ is a degree of the required shape.

\medskip

This takes into account all possibilities and proves the contention by induction.

The theorem as stated  in the Introduction is an immediate consequence: we have proved that  for large $n$ the degrees of the $p_n$ constitute precisely a certain set of arithmetical progressions modulo $\Pi$. So for large $m$ every interval $[m\Pi, (m+1)\Pi)$ contains the same number of $\deg p_n$, arranged in the same pattern. Recalling that the $\deg a_n$  are the differences of two consecutive ones among the $\deg p_n$, it follows that their sequence is indeed eventually periodic, of period (dividing the) number of $\deg p_n$ in the said interval.
\end{proof}

\subsubsection{\tt Remarks and examples} \label{SSS.rem.ex}   We collect here a number of  issues which we shall not develop in  detail here, in spite of their relevance.

\medskip

\noindent{\tt About the period}. Except for $\tilde g=0$ (see examples below), the given proof does not allow any good information on the anti-period, nor to establish the actual period length. This issue is related to effectivity in the Skolem-Mahler-Lech Theorem (especially in the case $\tilde g=0$ of Example \ref{EX.g=0} below), which is not yet known (and even considered possibly undecidable by some authors).

 In concrete cases however (as observed in \S \ref{S.SML}) the proof allows to bound  the period length effectively from above. We have no idea on the variation of the length with the data; for instance, one could ask whether the period length  may be bounded in terms only of $d$. These issues appear to be very deep. 

\medskip

\noindent{\tt  Subvarieties of Jacobians}. We again remark that `often' all the $\deg a_n$ shall be eventually $1$. However  it may happen that all of them are larger: just substitute $t\mapsto  t^h$ throughout.

 Let us comment on this with a bit more detail, restricting for the moment to the squarefree case (i.e. $D_1=1$), which is rather less complicated. Let then $(p,q)$ be a convergent to $\sqrt D$ with partial quotient of degree $l$. 

In this case equation \eqref{E.J} produces a multiple $(\deg p)\cdot \delta$ inside the subvariety $W_{g-(l-1)}$ of the Jacobian $J$.\footnote{Recall this is closed in the case of the usual Jacobian, whereas it is only constructible in general.}  Now, if $l>1$ this is a proper subvariety, and if this happens for infinitely many multiples, the results of \S \ref{S.SML} imply that  a translate (actually by a torsion point) of the canonical abelian (sub)variety $\Delta_0$ is contained in $W_{g-l+1}$.  

The abelian subvarieties of the $W_m$ have been studied in connection with points of bounded degree on curves, and we refer to  \cite{DF}, \cite{F} for more.  Clearly, if for instance $J$ is simple, either $\delta$ is torsion (and $D(t)$ is Pellian) or the above implies $\Delta_0=J$. In turn, this yields that $l=1$ for all but finitely many convergents. Again, even in this case  I do not know of any method for establishing  effectively the last occurrence of degree $>1$ (except for  $g\le 1$).  Similar considerations hold for the non-squarefree case, with generalized Jacobians in place of $J$. 

Here are some explicit examples in low genus (see \cite{Ber} and \cite{BMPZ} for other ones).

\begin{small}
\begin{example}\label{EX.g=0} Let us start with $\tilde g=0$, and $D(t)=D_1(t)^2(t^2-1)$, assuming  for simplicity that $D_1$ has  $g=d-1$ simple roots $\rho\neq \pm1$. Now   $g=\deg D_1$, and $\tt m$ is the sum of $2g$ points $\xi_{\rho}^\pm=(\rho,\pm\sqrt{\rho^2-1})$ above  the $g$ roots   of $D_1$. 
A divisor $A$ of degree zero is always principal $=\div(f)$, and we have a homomorphism to $\G_{\rm m}$ given by $A\mapsto f(\xi_\rho^+)/f(\xi_\rho^-)$; assembling these $g$ homomorphisms we obtain the isomorphism  $G\cong \G_{\rm m}^g$ for  the  generalized Jacobian. 
The divisor $\infty_--\infty_+$ equals $\div(z)$, where $z=t+u$, so $\delta\mapsto z(\xi_{\rho}^+)/z(\xi_{\rho}^-)=z(\xi_{\rho}^+)^2$. A point $p
\in\widetilde H$  corresponds  to $\div(z-z(p))$ and is sent to 
$(z(p)-z(\xi_\rho^+))/(z(p)-z(\xi_\rho^-))$. The varieties $\overline{W_h}=\overline{W_h(\tt m)}\subset \G_{\rm m}^g$ are then described by explicit equations; it is a pleasant   exercise to show that  $\overline{W_{g-1}}$  contains no coset of an algebraic subgroup of positive dimension.\footnote{This amounts to say that if $z_1,\ldots ,z_{g-1}$ are not all constant  functions on a curve, then the functions $\prod_i((z_i-a_j)/(z_i-b_j))$, $j=1,\ldots ,g$ generate,  modulo constants,  a  multiplicative subgroup of rank $>1$, for $a_j,b_l$ pairwise distinct; one looks at  zeros/poles.}  Then a coset of  the algebraic subgroup $\Delta_0(\tt m)$  can be contained in it only if $\Delta_0(\tt m)$ is trivial, i.e. all values $z(\xi_{\rho}^+)$ are roots of unity (i.e.  $D(t)$ is Pellian).\footnote{This torsion case is discussed also in \cite{McM2}, using Chebyshev polynomials.}   In any case, the partial quotients of degree $> h$ correspond to powers of the image of $\delta$ contained in $W_{g-h}$.  In many `concrete' cases, for $h>0$  one may find all such (finitely many) values, but I do not know of any completely general such procedure which is effective (except when $W_m$ is a curve).  Anyway,  this discussion proves  that: 

\smallskip

{\it Either $D(t)$ is Pellian, which happens if and only if all $z(\xi_{\rho}^+)$ are roots of unity, or there are only finitely many partial quotients of degree $>1$.} 
\end{example}

\smallskip   

The literature apparently contains only  the Pellian case (recalled  e.g. by  McMullen in  \cite{McM2}). 

\begin{example}\label{EX.g=1}  Let $D(t)=t^4+t^2+t$, which yields an elliptic curve $H$ with origin $\infty_+$; standard methods confirm that  $\delta$ is non-torsion, hence $D$ is non-Pellian. Now all partial quotients except $a_0$ have degree $1$. 

(ii) If we modify to $D(t)=t^2(t^4+t^2+t)$, the relevant generalized Jacobian $G$  is an extension of 
$H$ by $\G_a$, and it is non-split (see \cite{SeAGCF} and \cite{CMZ}). It follows that $\Delta_0$ is the full $G$, so again all partial quotients shall be eventually $1$. Incidentally, this also proves that only finitely many denominators of the convergents to $\sqrt{t^4+t^2+t}$ vanish at $0$ (for if $q_n(t)=t\hat q(t)$ we have a convergent $(p_n,\hat q)$ to $\sqrt{D}$ with partial quotient of degree $2$).

Recall  also that any partial quotient of degree $2$ yields a multiple $k\delta$ inside $W_1(\tt m)$; we do not know of any general method to calculate all such multiples, though an analogue of the proof method of \S \ref{S.SML} could sometimes work. 

(iii) If we modify to $D(t)=(t-\rho)^2(t^4+t^2+t)$, $\rho$ nonzero and not a root of $t^4+t^2+t$, $G$ is an extension of $H$ by $\G_{\rm m}$, isogenous to a split one precisely if $\xi_\rho^+-\xi_\rho^-$ is  torsion on $J$ (where $\xi_\rho^\pm$ are the points of $H$ above $t=\rho$). If this is not the case, we have similar conclusions as before. If it is, the situation depends on whether $\dim\Delta_0=1,2$. The   last case is similar to the above. To check whether the dimension is $1$, we may argue as follows. Let $\psi$ be a function on $H$ with divisor $m(\xi_\rho^+-\xi_\rho^-)$. Then, if $A=\sum m_i (x_i)$ is a divisor of degree $0$ on $H$,  
we have a map   $A\mapsto \prod\psi(x_i)^{m_i}$, and this maps $G$ to $\G_{\rm m}$.  Then an algebraic subgroup of $G$ different from $\G_{\rm m}$ is the kernel of this map, and hence  it  follows  that  $\dim \Delta_0=1$  only if $\Delta_0$  is inside this kernel, i.e.  $\psi(\infty_-)/\psi(\infty_+)$ is a root of unity. Precisely  in this case we have an infinity of partial quotients of degree $2$. 

However we may show this happens at most finitely many times (and perhaps never  for this $H$,  a fact which possibly one can prove). Indeed, if we have $\psi^k(\infty_-)=\psi^k(\infty_+)$, then the divisor $\xi_\rho^+-\xi_\rho^-$  would be torsion, this time in the extension $\cG$ of $H$ by $\G_{\rm m}$ defined by the modulus $\infty_-+\infty_+$. 
This extension is not isogenous to a split extension, because $\delta$ is not torsion in $H$. But the set of divisors classes $x-x'$,  $x\in H$,  forms a curve in $\cG$, which is not an algebraic subgroup (for instance since $\cG$ is not isogenous to a split extension). Then a theorem of Hindry \cite{H} applies. 

We have paused so long on  this  example also because the last conclusion is related (as in (ii)) to another result of this paper; namely, it implies that: {\it There are only finitely many numbers which are roots of infinitely many denominators $q_n$ of the convergents to $\sqrt{t^4+t^2+t}$.} Indeed, let $\rho$ be a root of such a $q_n$, so $q_n(t)=(t-\rho)b(t)$. Then $(p_n(t),b(t))$ is a convergent to $(t-\rho)\sqrt{t^4+t+1}$, and the partial quotient has degree at least $2$, concluding the argument.\footnote{It is to be remarked that several of these conclusions would follow also from Theorem \ref{T.dega2}; however we think these indepedent arguments may be relevant for other purposes.}  See also Remark \ref{R.ber} below. 

\end{example}

\begin{example}\label{EX.g=2} (i) Let now $D(t)=t^6+t+1$. It may be checked that this has genus $2$, that $J$ is simple   and again, since $\delta$ may be checked to be non-torsion,  all partial quotients have eventually degree $1$. This follows independently of the simplicity of $J$, because otherwise $W_1\cong H$ would have to be an elliptic curve. (See \cite{MZ} and the Appendix by V. Flynn for a discussion of the $\lambda$  when {\it some} partial quotient relative to $t^6+t+\lambda$ has degree $2$ and a proof of finiteness of the $\lambda$ for which the degree may be $3$, i.e. the Pellian cases in the family.)

We can also repeat some considerations of the previous example, on  modifying  to $D(t)=t^2(t^6+t+1)$or  to $D(t)=(t-\rho)^2(t^6+t+1)$

The  arguments apply more  generally; also, on varying the polynomial  in the family $t^6+at^4+bt^3+ct+1$, dimensional considerations suggest that we should  find  (non-Pellian) cases in which indeed infinitely many of the $q_n$ have a common zero. 
\end{example}

Similar  examples of course are possible in higher genus. 
\end{small}

\subsection{\tt Proof of Theorem \ref{T.dega2}}  Let us assume that there are infinitely many convergents $(p,q)$ to $\sqrt D$ with partial quotient of degree $l>d/2$. As in \S \ref{SS.proof1}, let us put $r:=\gcd(p,D_1)$, $D=r^2D^*$, $p=rp^*$, $r^*=\deg r$.  We may pick an $r$ of maximal degree  which occurs infinitely many times. As in \S\ref{SS.proof1}, setting $\varphi^*:=p^*-q\sqrt{D^*}$, we have 
 \begin{equation*}
 \div(\varphi^*)=-(\deg q+d^*)\delta +(\sigma^* -(d^*-l-r^*)(\infty_+)),
 \end{equation*}
where $\sigma^*=\sum (x_i)$ is an effective divisor of degree $d^*-l-r^*$ on $\widetilde H$ prime to $\tt m^*$, with no $x_i=\infty_\pm$, and with no pair $x_i,x_j$, $i\neq j$,  conjugate under our involution.

As already observed,  $(p^*,q)$ is a convergent  to $\sqrt{D^*}$ with partial quotient of degree $l+r^*$. By Theorem \ref{T.dega}, applied to $D^*$ in place of $D$, since this holds for an infinity of convergents, there is a whole arithmetical  progression of $k$  for which this holds for all large integers in it with $\deg p^*=k$.\footnote{This indeed follows from Theorem \ref{T.dega}, but anyway has been explicitly shown during the proof.}  Let $\{m\Pi+c, m\in\N\}$, be such a progression.  

Now, {\it a priori} it could happen that $p^*,D_1^*$ are not coprime along the whole progression;\footnote{Actually, one could strengthen Theorem \ref{T.dega} to include this, but we shall not need it.}  however this can happen at most finitely many times, because otherwise $r^*$ would not be maximal, as we have assumed before.  Hence we may assume that the last displayed equation holds for all elements in our progression, with $m\Pi+c$ in place of $\deg q+d^*$.

Let us now denote  by $\sigma^*_m$  the divisor $\sigma^*$ corresponding to the integer $m\Pi+c$ in the progression. Summing the equations corresponding to $m-1,m+1$ and subtracting twice the one corresponding to $m$, we get
\begin{equation*}
\sigma^*_{m-1}+\sigma^*_{m+1}\approx 2\sigma^*_{m},
\end{equation*}
where the strong equivalence is the one relative to $G(\tt m^*)$, as explained in \S \ref{SSS.gj}.  Now,  by definition of strong equivalence,  $\sigma^*_{m-1}+\sigma^*_{m+1}- 2\sigma^*_{m}$ is the divisor of a function $f_m$  to which we can apply Lemma \ref{L.unique}; since this function has degree at most $2(d^*-l-r^*)\le 2d-2l-2r^*<d^*$, the conclusion of the lemma implies that $f_m\in \C(t)$.\footnote{A somewhat related  argument appears in Frey's paper \cite{F} on points of bounded degree.} So, if a point $(\xi)$ appears in the divisor of its poles, also $(\xi')$ must appear, and by the above  this is only possible if $\xi$ corresponds to a zero of $\widetilde D$ distinct from the zeros of $D_1$. No other poles are possible. 

Also, the multiplicity of $(\xi)$ in  $\sigma^*_m$ must be exactly $1$, for otherwise the corresponding $p^*,q$ would not be coprime. Moreover $(\xi)$ cannot appear in $\sigma^*_{m+1}$, for otherwise it would appear only with multiplicity $1$ as a pole of $f_m$, which could not be a divisor of a function in $\C(t)$.

Now, if $(\xi)$ indeed appears in $\sigma^*_m$,  thus with multiplicity $1$, it  would appear in the divisor of zeros of $f_{m+1}$, and hence would  appear there with multiplicity $\ge 2$; this implies that it would  also appear in $\sigma^*_{m+2}$ (again with multiplicity $1$).   

\medskip

Similar considerations hold for the zeros. Let us suppose that a $\xi$  not of the said type, hence $\xi'\neq \xi$,  appears among the zeros of $f_m$. Then  $\xi'$ has also to appear since $f_m\in\C(t)$. But  we know that $\xi,\xi'$ cannot appear simultaneously in a same $\sigma_n^*$, hence by symmetry we may assume that $\xi$ appears  in $\sigma_{m+1}^*$, and $\xi'$   in $\sigma_{m-1}^*$; then $\xi$ cannot appear in $\sigma_{m-1}^*$ and therefore its multiplicity $\mu$ in $\sigma_{m+1}^*$ is greater that  twice the multiplicity $\nu$ in $\sigma_m^*$.  Now look at $f_{m+1}$; since $\xi$ cannot be among its poles (by the above), its multiplicity in $\sigma_{m+2}^*$ has to be at least $2\mu-\nu>\mu$. Now, repeating the last argument with $f_{m+2},f_{m+3},\ldots$, we see that $\xi$ would have strictly increasing multiplicity in the subsequent $\sigma_n^*$, which eventually is impossible.

\medskip

In conclusion,  every $\xi$ which appears has to be of the above type, and for every such $\xi$ the pattern of appearance is    periodic of period $2$, hence $\div(f_m)=\div(f_{m+2})$, whence
\begin{equation*}
\sigma^*_{m-1}+\sigma^*_{m+1}- 2\sigma^*_{m}=\sigma^*_{m+1}+\sigma^*_{m+3}- 2\sigma^*_{m+2}.
\end{equation*}
This linear recurrence has `roots' $1$ and $-1$. It is not difficult to check that any solution in a free abelian group  is of the shape $\alpha+(-1)^n \beta+n\gamma$, with $\alpha,\beta,\gamma$ in the group, in this case the divisors. Since our solution consists of effective divisors of  bounded degree, we must have $\gamma=0$ and in particular, we must eventually have $\sigma^*_{2m}$ constant.

But then, subtracting two of the divisor equations in the opening arguments, corresponding to consecutive multiples of $2\Pi$, we find that $2\Pi\delta\approx 0$ with respect to $G(\tt m^*)$, which means that  $D^*$ is Pellian.

Now we have only to check the stated inequality on $d^*$. Since $\sigma^*$ is effective we have $d^*\ge l+r^*=l+(d-d^*)>(d/2)+d-d^*$, i.e. $2d^*>3d/2$, as asserted.

\subsection{\tt Proof of Theorem \ref{T.h}}  
We shall estimate the height by means of rational functions, and we shall need sufficiently many of them which are independent. For this task, we first prove the following lemmas:

\begin{lem} Let $A$ be a simple (complex) abelian variety of dimension $r$, let $\alpha \in A$ a point such that the multiples $\{m\alpha: m\in\N\}$ are Zariski-dense in $A$, and let $f$ be a non-constant rational function on $A$. Then the rational functions $f(x), f(x+\alpha),\ldots ,f(x+(r-1)\alpha)$ are algebraically independent on $A$.
\end{lem}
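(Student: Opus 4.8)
The plan is to argue by contradiction: suppose that $f(x), f(x+\alpha), \ldots, f(x+(r-1)\alpha)$ are algebraically dependent on $A$, and derive a contradiction with the simplicity of $A$ and the Zariski-density of $\{m\alpha\}$. First I would observe that if these $r$ functions are algebraically dependent, then the image of the morphism
\begin{equation*}
F : A \to \mathbb{P}^1 \times \cdots \times \mathbb{P}^1, \qquad x \mapsto (f(x), f(x+\alpha), \ldots, f(x+(r-1)\alpha)),
\end{equation*}
(or, to be safe, its rational analogue on a Zariski-open subset where all coordinates are defined) has dimension $< r = \dim A$. Hence there is a positive-dimensional connected subgroup $B \subseteq A$ contained in a fiber of $F$ (generically), i.e.\ $f$ is constant on the translates $b + x_0 + j\alpha$ for $b \in B$ and $j = 0, \ldots, r-1$; equivalently $f(x + b) = f(x)$ identically for $b$ in the stabilizer $B$ of $f$ under translation, enlarged appropriately. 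Since $A$ is simple, any positive-dimensional connected algebraic subgroup equals $A$ itself, so $B = A$, which forces $f$ to be constant, contradicting the hypothesis.

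The point requiring a little more care is how exactly one extracts the subgroup $B$. The clean route is: let $B$ be the connected component of the identity of the group $\{b \in A : f(x+b) = f(x) \text{ identically}\}$ (the translation-stabilizer of the function field element $f$, which is a closed subgroup of $A$). If $B = A$ then $f$ is translation-invariant, hence constant, done. If $B \neq A$, then by simplicity $B = 0$, so $f$ has trivial stabilizer. In that case the translates $f(\cdot + m\alpha)$, as $m$ ranges over $\mathbb{Z}$, are pairwise distinct (if $f(\cdot + m\alpha) = f(\cdot + m'\alpha)$ then $(m-m')\alpha$ stabilizes $f$, so $(m-m')\alpha = 0$; but $\{m\alpha\}$ is Zariski-dense hence infinite, so $\alpha$ has infinite order and $m = m'$). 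I would then show that in fact $f(\cdot), f(\cdot + \alpha), \ldots, f(\cdot + (r-1)\alpha)$ are algebraically independent by a dimension count: the map $F$ above, composed with the quotient by the (trivial) stabilizer, is generically finite onto its image — more precisely, the subfield $\mathbb{C}(f(x), \ldots, f(x+(r-1)\alpha))$ of $\mathbb{C}(A)$ corresponds to a subvariety/quotient, and if its transcendence degree were $< r$ the generic fiber of $F$ would be positive-dimensional, its connected component through a generic point would (after translating to the origin via the group law, using that fibers of a homomorphism-like map are torsors) give a positive-dimensional subgroup fixing all $f(\cdot + j\alpha)$ simultaneously, hence contained in the stabilizer of $f$ — contradiction.

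The main obstacle, and the place I would spend the most effort, is making rigorous the step "a positive-dimensional fiber yields a positive-dimensional subgroup." The subtlety is that $F$ is not a group homomorphism, so its fibers are not a priori cosets; one must use that $F$ is equivariant in a suitable sense, or argue via the following standard fact: if $X \subseteq A$ is the Zariski closure of the fiber $F^{-1}(F(x_0))$ and has positive dimension, consider the stabilizer $\mathrm{Stab}(X) = \{a \in A : X + a = X\}$; one shows $\mathrm{Stab}(X)$ is positive-dimensional when $X$ is (for this, the simplicity of $A$ combined with the theorem of the cube / Ueno-type fibration arguments can be invoked, or more elementarily one uses that on a simple abelian variety every proper closed subvariety has finite stabilizer only if\ldots — actually the cleanest is: $X$ generates $A$ since $A$ is simple and $X$ is not contained in a translate of a proper subgroup, so a general translate $X + a$ meets $X$, and a dimension count on $\{(x, a) : x \in X, x + a \in X\}$ forces $\dim \mathrm{Stab}(X) = 2\dim X - \dim A$ when this is positive). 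Once one has a positive-dimensional $\mathrm{Stab}(X)$, simplicity gives $\mathrm{Stab}(X) = A$, so $X = A$, contradicting $\dim X < r$. I expect the rest — distinctness of the translates, reduction to the stabilizer, invoking Zariski-density of $\{m\alpha\}$ only to guarantee $\alpha$ has infinite order and that no nontrivial relation holds — to be routine.
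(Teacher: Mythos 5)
Your overall strategy is the same as the paper's (derive a positive-dimensional translation-stabilizer for $f$, then invoke simplicity), but the crucial step — extracting a subgroup from the algebraic dependence — has a genuine gap that the paper's differential argument is designed precisely to fill. Concretely, you claim that a positive-dimensional fiber of the map $F(x) = (f(x), \ldots, f(x+(r-1)\alpha))$ yields a positive-dimensional subgroup fixing $f$. This is false as stated: $F$ is not a homomorphism and its fibers are not torsors under any subgroup action, so "translating the fiber to the origin via the group law" does not produce a subgroup. Your backup argument via $\mathrm{Stab}(X)$ is also flawed: nonempty intersection $X \cap (X-a) \neq \emptyset$ does not imply $a \in \mathrm{Stab}(X)$ (which requires set-theoretic \emph{equality} $X+a = X$), and the proposed formula $\dim\mathrm{Stab}(X) = 2\dim X - \dim A$ contradicts simplicity itself — on a simple $A$ every proper subvariety has \emph{finite} stabilizer, whereas your formula would force a positive-dimensional stabilizer whenever $\dim X > r/2$.

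What is missing is the passage to the infinitesimal picture, and this is where the paper's argument differs essentially. The paper does not work with the set-theoretic fibers of $F$ at all; it considers the gradient vectors $F_s(x) = (\partial_1 f(x+s\alpha), \ldots, \partial_r f(x+s\alpha))$ with respect to translation-invariant derivations $\partial_i$. Algebraic dependence forces these to span a proper $\C(A)$-subspace of $\C(A)^r$; translation invariance of the $\partial_i$ makes the rank computation shift-invariant, and the Zariski-density of $\{m\alpha\}$ is then used (for the first time in an essential way — your proposal uses it only to get infinite order) to propagate the resulting linear relation from the arithmetic progression $\{s\alpha\}$ to every point $z \in A$. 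This produces a single translation-invariant derivation $\partial$ with $\partial f = 0$, i.e.\ a one-parameter subgroup on which $f$ is constant; its Zariski closure is an abelian subvariety, and simplicity finishes. In short: the subgroup you need lives in the tangent space, not in the fiber, and without a translation-invariant infinitesimal structure your geometric fiber argument cannot locate it.
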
 

\begin{proof}  Let  $\partial_1,\ldots ,\partial_r$ be independent derivations on $A$, invariant by translation. Then if the conclusion is not true, the gradient vectors $F_s=F_s(x):=(\partial_1 f(x+s\alpha),\ldots ,\partial_r(f(x+s\alpha))$, $s=0,\ldots , r-1$, are linearly dependent over the function field $\C(A)$ of $A$. Let $m\ge 0$ be the maximal integer such that $F_0,\ldots ,F_{m-1}$ are linearly independent (so $m=0$ iff $F_0=0$). Then if $m=0$ we have that $f$ is constant, against the assumption; hence $m\ge 1$, and $m\le r-1$ under the present  hypotheses. 

Note that, since the $\partial_i$ are translation-invariant,  replacing $x$ by $x+h\alpha$ (any $h\in\Z$)  shows that  $m$ is the maximal integer such that any $m$ consecutive ones among the $F_s$, $s\in\Z$, are independent, and any consecutive $m+1$   of them are  dependent. Then by induction on $s\in\N$ it is easy to see that $F_0,\ldots , F_{m-1}, F_s$ are dependent (use that $F_s$ lies in the space spanned by the $m$ preceding vectors). Hence for any $s$ the vector $F_0(x+s\alpha)=F_s(x)$ lies in the $\C(A)$-space generated by $F_0,\ldots , F_{m-1}$, which means that all  $(m+1)\times (m+1)$ minors of the corresponding matrix vanish, as rational functions on $A$. By Laplace rule, any minor is of the shape $\sum_{i\in I}c_i(x)\partial_if(x+s\alpha)$, where $I$ is a subset of $\{1,\ldots ,r\}$ with $|I|=m$ and where the $c_i$ are rational functions on $A$, independent of $s$, and,  at least for some $I$,  not all zero.  Now, since the vanishing holds for all $s$ and since the multiples of $\alpha$ are Zariski-dense, we have the same relation on replacing $s\alpha$ by any point $z\in A$. Hence the vector $(\partial_1f)(x+z),\ldots ,(\partial_rf)(x+z))$ satisfies a nontrivial  linear relation with coefficients which are rational functions only of $x$. By specializing $x$, we obtain that there is a non-zero derivation $\partial$ invariant by translation and such that $\partial f=0$ identically. Hence $f$ is constant on a non-trivial subtorus (of the complex torus corresponding to $A$); the Zariski closure   in $A$ of this  subtorus is an  abelian subvariety  and then since $A$ is simple $f$ must be constant, a contradiction that proves what asserted. 
\end{proof} 

\begin{lem}\label{L.h}   Let $\Delta$ be an abelian variety over $\overline\Q$, let  $\delta\in \Delta(\overline\Q)$ be such that its multiples are Zariski dense in $\Delta$, and let $f\in\overline\Q(\Delta)$ be  non-constant. Then there is an integer $m>0$ such that for any  integer $n>0$  at least one of the functions $f(x+h\delta)$, $0\le h\le m$, is defined at $x=n\delta$ and 
$$
 1+\max_{h=0}^m h(f(n+h)\delta)) \gg n^2,
$$ 
where   in taking the maximum we consider only  the (non-empty) set of well-defined values, and where the implicit constant does not depend on $n$.
\end{lem}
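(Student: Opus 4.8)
The plan is to invoke the functoriality of heights to replace $h(f(P))$ by a N\'eron--Tate height of $P$, and then to combine the nefness of the relevant line bundle with the Zariski-density hypothesis, through the results of \S\ref{S.SML}, to force quadratic growth along the multiples of $\delta$.

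First I would set $D:=\div_\infty(f)$, the effective divisor of poles of $f$, and $L:=\O_\Delta(D)$. I shall use the elementary fact that \emph{every effective divisor on an abelian variety is nef}: for an irreducible curve $C$ one has $D\cdot C=\tau_a^*D\cdot C$ because the numerical class is translation-invariant, and a generic translate $\tau_a^*D$ meets $C$ properly. Hence $L$, and therefore also the symmetric bundle $M:=L\otimes[-1]^*L$, is nef. Since $M\otimes N=L^{\otimes 2}$ with $N:=L\otimes[-1]^*L^{-1}\in{\rm Pic}^0(\Delta)$, we get $2\hat h_L=\hat h_M+\hat h_N$, where $\hat h_M(k\delta)=k^2\hat h_M(\delta)$ and $\hat h_N(k\delta)=k\,\hat h_N(\delta)$; thus $\hat h_L(n\delta)=An^2+Bn$ with $A=\tfrac12\hat h_M(\delta)\ge 0$. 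Next, functoriality of heights for the rational map $[1:f]\colon\Delta\dashrightarrow\P^1$ gives $h(f(P))=h_L(P)+O(1)$ uniformly for $P\notin{\rm supp}(D)$: one computes $h_L$ with the canonical section of $\O_\Delta(D)$, whose local heights differ from $v\mapsto\log^+|f|_v$ by a function extending continuously across ${\rm supp}(D)$, hence bounded on the compact $\Delta(\C_v)$ and trivial for almost all $v$. Because ${\rm supp}(D)$ is a proper Zariski-closed set while the multiples of $\delta$ are Zariski-dense, Theorem \ref{T.SML} yields $n\delta\in{\rm supp}(D)$ for only finitely many $n$; let $N_0$ bound these and put $m:=\max(1,N_0)$. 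This already gives the ``defined'' assertion, and reduces everything to showing $A>0$: then $h(f(n\delta))=An^2+O(n)\gg n^2$ for $n>N_0$, while for each of the finitely many $1\le n\le N_0$ one chooses $h\le m$ with $n+h>N_0$, so that $f((n+h)\delta)$ is defined and $1+\max_{0\le h'\le m}h(f((n+h')\delta))\ge 1$, which is $\gg n^2$ once the implied constant is shrunk to absorb these finitely many cases.

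The step I expect to be the main obstacle is excluding $A=0$. If $\hat h_M(\delta)=0$, then, $M$ being nef, $\hat h_M$ is a positive semi-definite quadratic form and $\delta$ lies in its radical; this radical is, up to a finite subgroup, the set of $\overline\Q$-points of an abelian subvariety $B\subseteq\Delta$, with $B=\Delta$ precisely when $M$ is numerically trivial. Hence $N'\delta\in B$ for some $N'\ge 1$, so all multiples of $N'\delta$ lie in $B$; but since the multiples of $\delta$ are already dense, Lemma \ref{L.SML} (its integer $\mu$ being $1$ here, as $\overline{\{n\delta\}}=\Delta$ is connected) shows the Zariski closure of $\{nN'\delta:n\in\N\}$ is still all of $\Delta$. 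So $B=\Delta$ and $M\in{\rm Pic}^0(\Delta)$; but $M=\O_\Delta(D+[-1]^*D)$ is represented by the effective divisor $D+[-1]^*D$, and a nontrivial element of ${\rm Pic}^0$ has no nonzero section, so $D+[-1]^*D=0$, whence $D=0$ and $f$ is a regular, hence constant, function on $\Delta$ --- contradicting the hypothesis. Therefore $A>0$, and the lemma follows. (One could alternatively run the argument through the preceding lemma, reducing to $\Delta$ simple so that $\dim\Delta$ consecutive translates $f,f\circ\tau_\delta,\dots$ are algebraically independent and the pullback to $\Delta$ of an ample class on a product of projective lines is nef and big, hence ample; the direct route above avoids this reduction.)
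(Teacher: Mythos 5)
Your argument hinges on the uniform comparison $h(f(P)) = h_L(P) + O(1)$ for $P\notin\mathrm{supp}(D)$, and this is where it breaks down once $\dim\Delta\ge 2$. The map $[1:f]\colon\Delta\dashrightarrow\P^1$ is in general only a rational map, with a nonempty indeterminacy locus $Z$ (the codimension-$\ge2$ set where the zero and pole loci of $f$ meet); unless $f$ happens to define a morphism, which forces it to factor through a quotient elliptic curve, one has $Z\ne\emptyset$. Approaching a point of $Z$ along the zero locus of $f$, one finds $\log^+|f|_v$ bounded (indeed zero) while the local height $\lambda_{D,v}$ blows up, so the difference you assert ``extends continuously across $\mathrm{supp}(D)$'' does not. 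After resolving $[1:f]$ by a blow-up with exceptional divisor $E$, one gets $h(f(P)) = h_L(P) - \lambda_E(P) + O(1)$, and the proximity term $\lambda_E$ is unbounded even on $\Delta\setminus\mathrm{supp}(D)$; controlling it along the Zariski-dense sequence of multiples $n\delta$ is exactly the Vojta-type obstruction flagged in Remark~\ref{R.vojta}, beyond current methods when $\dim\Delta\ge2$. Your window $0\le h\le m$ is used only to skip the finitely many $n$ with $n\delta\in\mathrm{supp}(D)$, so it cannot repair this.

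In the paper's proof the window is essential rather than cosmetic: the preceding lemma produces $r=\dim A$ algebraically independent translates $f(x),\dots,f(x+(r-1)\alpha)$ on a simple factor $A$, so $F\colon A\dashrightarrow\A^r$ is dominant and hence generically finite; on an open set where $F$ restricts to a finite morphism onto its image one has an honest comparison $\hat h(x)\ll 1+h(F(x))$ with no exceptional-divisor correction, and the fact that finitely many $\alpha$-translates of the bad locus have empty intersection supplies the window size. Your nef-divisor computation showing $\hat h_L(n\delta)=An^2+Bn$ with $A>0$ is correct as far as it goes (and is a clean way to see that the symmetrized pole divisor of a non-constant $f$ is not numerically trivial), but it estimates $h_L(n\delta)$, not $h(f(n\delta))$; the missing bridge between the two is the heart of the matter, and is what the algebraic-independence lemma is there to provide.
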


\begin{proof} The assertion  is invariant under isogeny, so we may suppose that $\Delta$ is a product of simple abelian varieties (defined over $\overline\Q$); then $f$ is non-constant when restricted to some simple factor $A$ of   dimension $r>0$. We let $\alpha$ be the projection of $\delta$ to $A$, so  the multiples of $\alpha$ are Zariski-dense in $A$. Note that if $\hat h$ is a canonical height on $A$ associated to an ample divisor, we have $\hat h(n\alpha)=n^2\hat h(\alpha)\gg n^2$, because $\alpha$ is not a torsion point of $A$. 

By the previous lemma, the functions $f(x),\ldots ,f(x+(r-1)\alpha)$ on $A$ are algebraically independent, hence we obtain a dominant rational map 
$$
F:A\to\A^r, \qquad F(x)=(f(x),\ldots ,f(x+(r-1)\alpha)).
$$
Let $V$ be a closed proper subset of $A$ such that $F$ is defined  on $A\setminus V$. On   enlarging  $V$ if necessary, we may assume that $F$ is finite on $A\setminus V$ to its image, and then   standard (easy) arguments on heights show that for algebraic points $x\in A\setminus V$, we have $1+h(F(x)) \gg \hat h(x)$. Then,  for any fixed integer $j$, if $x\in A\setminus (V-j\alpha)$ we have $1+h(F(x+j\alpha))\gg \hat h(x+j\alpha)\gg \hat h(x)+O(1)$.

Now, since the set of multiples of $\alpha$ is Zariski-dense, there is an integer $b>0$ such that the intersection $\bigcap_{j=0}^b (V-j\alpha)$ is empty.\footnote{It is not difficult to show that $b$ can be bounded only in terms of the number, dimensions and degrees of the components of $V$.} Then for every $x_0\in A(\overline\Q)$ at least one among the $F(x+j\alpha)$, $0\le j\le b$, is defined at $x=x_0$, and by the above we have 
$1+\max_{j=0}^b h(F(x_0+j\alpha))\gg \hat h(x_0)+O(1)$, and the conclusion of the lemma follows on taking $m=r+b$, $x_0=n\alpha$. 
\end{proof}

\begin{small}
\begin{rem}\label{R.vojta}  It would be desirable to have a lower bound for each individual value $h(f(n\delta))$; however this issue  appears to lie beyond the presently known techinques (except when $\dim\Delta =1$). The functorial properties of the height suffice when we  deal with values of morphisms; otherwise  deep problems arise already in simple cases, due to the appearance of exceptional divisors when we regularize the map by blowing-up. Use of the Vojta conjectures (see \cite{[BG]}, Ch. 14) should often take care of these issues. 
\end{rem}
\end{small} 

To go ahead, we shall use the formulae of \S\ref{SS.formulae}, and also   obtain  further ones. We stick to that notation, working with a convergent $(p_n,q_n)$ and often omitting the index $n$ for simplicity.  

Also, in view of Theorem \ref{T.dega}  we may tacitly move $n$ in an arithmetic progression   modulo a certain fixed integer $\Pi>0$ such that the degrees of $a_n$ depend only on the class of $n$ modulo $\Pi$, and the degree of $p_n$ is expressed by a certain fixed linear polynomial in $n$. We may also assume that $\Pi$ is such that the multiples of $\Pi\delta$ lie in the canonical abelian variety $\Delta_0$. 

\medskip

In the Jacobian $J$ we have   formula \eqref{E.J}, i.e. $ (\deg p) \delta=j(x_1)+\ldots +j(x_{d-l})$ for $l=l_n=\deg a_n$ and points $x_i=x_{in}\in H$, distinct from $\infty_\pm$ and such that no pair of conjugate points appear (under the usual involution) and uniquely determined by $\deg p$. 

Setting, $t_i=t(x_i)$, we also have the polynomial $R(t)=R_n(t)=c_n\prod_{i=1}^{d-l}(t-t_i)\neq 0$,  and  we may write  $R_n(t)^{-1}=c_n^{-1}t^{l-d}(1+\rho_1t^{-1}+\ldots )$, where $\rho_h$ is a  certain universal symmetric function homogeneous of degree $h$ of the $t_i$. 

Hence, by \eqref{E.a_n}, i.e. $a_n=2(-1)^n\sqrt D/R_n(1+O(t^{-l-1}))$,  the coefficient of $t^{l-j}$ in $a_n$, for $j=1,\ldots ,l$, is a certain linear combination, with coefficients which depend only on $D(t)$, of the $\rho_h$, $h\le j$.  It is to be remarked that $\rho_j$ appears in such $j$-th coefficient. 

\medskip

After these preliminaries we can go to the actual proof. 
Note that when we express a point $z\in J$ as a sum $z=\sum_{i=1}^g j(u_i)$  the $u_i\in H$ are generically uniquely determined by the point $z$ on $J$. Then the function $\sum t(u_i)$ (which is a rational function on the $g$-th symmetric power of $H$)  may be viewed as  a rational function of  $z\in J$, and the same holds for any given symmetric polynomial in the $t(u_i)$. 

Taking this into account, we see that the coefficients of   $c_na_n$ are given by the values at the point $z_n=(\deg p_n)\delta$ of certain fixed rational functions on $J$. \footnote{These rational functions may in fact depend on the degree of $a_n$, but here $n$ varies along a progression where $\deg a_n$ is fixed and $\deg p_n$ is a certain fixed linear polynomial in $n$.}  

This implies (by standard easy height theory) that the projective height $h(a_n)\ll n^2$, proving the first assertion of the theorem.

The lower bound (for the affine height) we found  more laborious. The point $z_n$ will lie on a suitable (torsion) coset of the non-trivial abelian subvariety $\Delta_0$ of $J$ (introduced in \ref{SSS.can}), and this coset shall be fixed for the progression of $n$ in question. Then we may actually view these rational functions  as rational functions on $\Delta_0$

\medskip

 Now, if any of these functions is non-constant on $\Delta_0$, we may apply Lemma \ref{L.h}  (with $\Pi\delta$ in place of $\delta$) and deduce that the maximum  (projective) height of a sufficient number of consecutive $a_n$ (for $n$ in the progression) shall be bounded below by $\gg n^2$, as required.

Thus we may assume from now on that these rational functions are constant on $\Delta_0$, and it follows that $c_na_n$ is a  polynomial  independent of $n$ for the values of $n$ in question. We may actually assume that this holds for all the progressions modulo $\Pi$.   In this case, which we do not know  if  at all possible \footnote{ We note that  if we restrict to a single progression, this may happen; these cases correspond to a quite peculiar Jacobian, see Remark \ref{R.12} above. Excluding that this may happen even for {\it all}  the progressions would lead to a lower bound for the usual projective height instead of the affine height.},    we have to take advantge of $c_n$. 

\medskip

For this, let us consider the  polynomials $a_nR_n$ (with leading coefficient $\pm 2$). Note that (for $n$ in a fixed progression modulo $\Pi$ and under the present assumptions)  this  is the product of a constant polynomial (i.e.  $c_na_n$) times the polynomial   $\prod (t-t_i)$; hence its coefficients in particular do not involve $c_n$ in this expression, but only symmetric functions of the $t_i$, of degree $\le d$. As before,  these functions can be viewed as rational functions on $\Delta_0$, evaluated at a suitable multiple of $\delta$. 

\medskip

Suppose first that  all the $a_nR_n$ are constant in each progression modulo $\Pi$ (for large $n$). Then the $t_i=t_{in}$ may have only finitely many values for varying $n$, and hence the same holds for the $x_i$. Taking then two distinct $n,m$ which correspond to the same $x_i$ we deduce that  $(\deg p_n-\deg p_{m})\delta=0$, against the assumption that $\delta$ is non-torsion.

\medskip

Hence let us suppose that  for some progression, some $a_nR_n$ is not constant. Writing $\prod(t-t_i)=t^{d-l}+\sigma_1t^{d-l-1}+\ldots$, let us suppose that $\mu$ is the minimum integer such that, in some progression, $\sigma_\mu$  does not correspond to a constant rational function. 

Then, exactly as before, if any of these functions is non-constant, we can derive a lower bound $\gg n^2$ for the maximum height of a sufficient number of consecutive ones among the coefficients of $t^{d-\mu}$ in the polynomials $a_nR_n$.

However,  equation \eqref{E.a_n} says that $a_nR_n=S_n-S_{n-1}$ and a lower bound would follow similarly for the maximum height of the coefficients of $t^{d-\mu}$ in sufficiently many consecutive ones among  the $S_n$. 

But then, referring again to \S \ref{SS.formulae}, we may use the equation  $S_n^2=D+R_nR_{n+1}$.   

We are assuming that all the first $\mu$ coefficients in $R_n/c_n$ and $R_{n+1}/c_{n+1}$ are constant  (in any  progression of $n$ modulo $\Pi$). Then the same would hold for the first $\mu$ coefficients in their product $(R_n/c_n)(R_{n+1}/c_{n+1})$.  

Since $\deg R_m=d-l_m\le d-1$, we deduce that the height of the first $\mu+1$ coefficients in $S_n^2$ is bounded by $h(c_nc_{n+1})+O(1)$, and expansion of the square root shows that the same holds for $S_n$. 


But  now  we have a contradiction unless the height of $c_nc_{n+1}$ is $\gg n^2$ for some $n$ in any sufficiently large interval. Then $h(c_n)+h(c_{n+1})\gg n^2$ for   these $n$.

We deduce that the maximum height of  a large enough number of consecutive $c_n$ is bounded below as required. And this would finally prove that the affine height of one at least of the corresponding $a_n$ is likewise bounded below.

This concludes the proof.

\begin{small}
\begin{rem} \label{R.h}  (i)  The proof shows that  the integer $M$ can be taken $\le c\Pi$, where $c$ depends only on $d$ and $\Pi$ is a period for the sequence of degrees of the $a_n$. Perhaps this dependence can be eliminated. 
For instance, when for instance $J$ is simple the proof may be shortened, and the recourse to Theorem \ref{T.dega} may be avoided. In this case, as follows from  the considerations above  in this paper, we have eventually  $\deg a_n=1$,  hence the period is $1$ and one may get a lower bound for the height on taking a number of consecutive $a_n$ bounded in terms only of $d$.

(ii)  {\tt Upper bounds}.  Whereas a bound $\ll n^2$ for the (usual) {\it projective} height of the $p_n,q_n,a_n$ follows from  e.g. Siegel's lemma (as in \cite{Bo-Co}) or by noting that the zeros of $\varphi_n=p_n-q_n\sqrt D$ satisfy that bound, 
 the same sort of upper bound does not generally hold for the {\it affine}  height of the same quantities, as shall be shown in the next Example \ref{EX.ell}. Heuristically, to explain  such a somewhat striking  behaviour,   we note that the $p_n,q_n$ which arise from the continued fraction are not normalized as one could perhaps expect, e.g. with coprime integer coefficients (when everything is defined over $\Q$): indeed, reduction modulo a prime $\ell$ produces always a Pellian polynomial, as must be the case over a finite field, and this may be seen to force $p_n,q_n$ to be divisible by $\ell$ for some $n$. (See \cite{[vdPT]}, \cite{Ma} and \cite{Me}.) 

The arguments used in the above proof suggest that the   affine height might depend on  quantities like $h(\sum_{m=0}^n(-1)^m f(m\delta))$, where $f$ is a rational function on $J$; a precise estimation of  this seems to fall outside the  standard  theory, although one can obtain an upper bund $\ll n^3$. 

(iii) In the case $d=2$ of elliptic curves everything becomes more explicit, and it is readily proved that   $h(a_n)\asymp  n^2$ holds for each individual large $n$.  In the next example we add precision to this and prove the assertion of the above Addendum.
\end{rem}
\end{small}

\begin{small}
\begin{example}\label{EX.ell} We let $D(t)=t^4+t^2+t$, which can be checked to be non-Pellian, so all of the $a_n$ after the first have degree $1$. As in the above proof, we can relate them to the values of  the function $t\in\Q(H)$ at multiples of $\delta$. Now $g=1$, which makes things rather simpler: one finds that, setting $z_n=t((n+1)\delta)$, we have \footnote{Some of the formulae appear also in \cite{A-R} with different notation.} 
$$
R_n=c_n(t-z_n),\qquad a_n=2(-1)^nc_n^{-1}(t+z_n),\qquad S_n=(-1)^n(a_0+\gamma_n),
$$
for constants $\gamma_n$ with $\gamma_0=0$. From the identity $S_n^2=D+R_nR_{n+1}$  one derives $2\gamma_n=(z_n+z_{n+1})^{-1}=c_nc_{n+1}$, $8\gamma_nz_nz_{n+1}=(2\gamma_n+1)^2$ for $n>0$. 

Since the $z_n$ are values at $n\delta$ of the function $z\mapsto t(z+\delta)$ on $H$, of degree $2$, this yields the sought information for the heights of the $z_n$ and $\gamma_n$ (and also $c_nc_{n+1}$), i.e. the heights are asymptotic to constant times $n^2$. The same holds for the {\it projective}  height of $a_n$.

The formulae also deliver striking identities like $2\gamma_n=-1-4(z_n^2-z_{n-1}^2+\ldots +(-1)^nz_0^2)$, obtained from $S_n=\sum_{m=0}^na_mR_m$. 


For the affine height, one needs information on the $c_n$. From the above, we find $c_{n+1}/c_{n-1}=(z_{n-1}+z_n)/(z_n+z_{n+1})$, whence 
  $c_n=\prod_{m=1}^{n/2}((z_{n-2m}+z_{n-2m+1})(z_{n-2m+1}+z_{n-2m+2})^{-1})$ (for even $n$). How does the height of this product behave ?  We analyze this in the 
  
  \medskip
  
  \noindent{\it Proof for the Addendum to Theorem \ref{T.h}}. The above formulae deliver $4z_nz_{n+1}=(1+(2\gamma_n)^{-1})^2(2\gamma_n)$, i.e. $4z_nz_{n+1}=(1+z_n+z_{n+1})^2(z_n+z_{n+1})^{-1}$. 
  
  For $z$ a point of the elliptic curve  $H$ (with origin $o=\infty_+$), consider the function  $\xi(z)=t(z)+t(z+\delta)$. Observe that $z_n=t((n+1)\delta)$, so the last identity means that the functions $4t(z)t(z+\delta)$ and $(1+\xi(z))^2/\xi(z)$ take the same values at $z=n\delta$ for all $n>0$. Therefore the two functions must coincide, i.e. we have the identity (which could be proved directly) 
  $$
  4t(z)t(z+\delta)={(1+\xi(z))^2\over \xi(z)}.
  $$
  Now, $t$ has (simple) poles at the origin $o$ and at $\delta$, so $t(z+\delta)$ has poles at $-\delta ,o$ and $\xi(z)$ has poles at $-\delta, \delta$ and maybe $o$. But if it had three distinct poles, it would have three zeros and the function on the right side of the identity would have at least $6$ poles, whereas it has degree $\le 4$. We conclude that $\xi(z)$ has poles only at $-\delta,\delta$ and inspection of the identity shows that it must have in fact a double zero at $o$, namely  $\div(\xi)=2(o)-(\delta)-(-\delta)$. \footnote{Of course one could check directly these conclusions.} 
  
 Set now $\eta(z)=\xi(z+\delta)/\xi(z)$.  It has divisor $\div(\eta)= 3(-\delta)+(\delta)-3(o)-(-2\delta)$. 
 
 Also, fix   an integer $k>0$ and set $\pi_k(z)=\eta(z)\eta(z-2\delta)\cdots \eta(z-2k\delta)$. From the above, we readily find that $\div_\infty(\pi_k)= 3(o)+4((o)+(2\delta)+\ldots +((2k-2)\delta))+(2k\delta)$, so $\pi_k$ has degree $4(k+1)$.
  
  It follows from standard height theory that $h(\pi_k(n\delta))\sim 4(k+1)n^2\hat h(\delta)$, (where $h$ is the Weil height  and $\hat h$ is a canonical height associated to a point). 
  
  Now, we have seen that $c_{n}c_{n+1}=\xi((n+1)\delta)$, hence $c_{n+1}=c_{n-1}\eta(n\delta) $, and by iteration it follows that  $c_{n+1}=c_{n-1-2k}\pi_k(n\delta)$. We conclude that, for fixed $k$ and large $n$ we have 
  $$
  h(c_{n+1})+h(c_{n-1-2k})\ge kn^2\hat h(\delta)\gg kn^2,
  $$
proving what we want. Recall  that $c_nc_{n+1}$ has instead height $\ll n^2$.
  
  We finally observe that the recurrences easily yield $h(c_n)\ll n^3$; in the converse direction,  from some quantitative  form of the above height inequality it is probably possible to prove $\max_{m\le n}h(c_m)\gg n^{2+e}$ for some $e>0$. One may ask whether it is possible to take $e=1$ or at least any $e<1$. (Calculations of Merkert would support this expectation.) 
  \end{example} 
  \end{small}

 
\medskip

\subsection{Proof of Theorem \ref{T.zeros}}\label{SS.T.zeros}   We preserve the above notation, letting $D(t)=D_1(t)^2\tilde D(t)$, 
and we let $\rho\in \kappa$ be such that $D(\rho)\neq 0$.  We also denote by $\xi_\pm$ the two points of $\tilde H$ above $t=\rho$. 
The easy Pellian case has been discussed in the Introduction, but the present proof works in that case  as well.

Now,  as in \S \ref{SSS.gj}, $D(t)$ gives us an  extension of $J$ (depending on $D_1$) which we denote by $G$; also,  the modulus $\xi_++\xi_-$ yields an extension of $J$ by $\G_{\rm m}$. We let $\cG$ denote the fiber product over $J$ of these extensions. Hence $\cG$ is an extension of $G$ by $\G_{\rm m}$.  Equivalently, $\cG$ is the extension of $J$ obtained as above, corresponding to the polynomial $\cD(t):=(t-\rho)^2D(t)$.  We shall denote by $\pi:\cG\to G$ the natural map and by $\tt m$, resp. $\cM$, the  moduli corresponding to $G$, resp.  $\cG$.

\medskip

Let us suppose that infinitely many convergents $(p_n,q_n)$ to $\sqrt{D(t)}$ have $q_n$   divisible by $(t-\rho)$, and let us move in a progression (using Theorem \ref{T.dega}) for which $l=a_n$ is fixed. 

As in previous proofs, it may happen that $p_n$ is not coprime to $D_1$. In this case, we divide out by the $\gcd(p_n,D_1)$ (which has only finitely many possibilities) and argue on replacing  $D$ by its corresponding divisor. We may then suppose directly that  $\gcd(p_n,D_1)=1$, so that  $D_1(t(x))\neq 0$ for any for   zero $x\in \widetilde H$ of $\varphi_n:=p_n-q_n\sqrt D$.  

Then we have our usual equation
\begin{equation*}
(\deg p_n)\delta=[x_1]+\ldots +[x_{d-l}]\quad \hbox{in $G$},
\end{equation*}
where  the $x_i$ are points in $\widetilde H\setminus \rm{supp}(\tt m)$, of course depending on $n$, they  are $\neq \infty_\pm$ and no pair of conjugate ones (under the involution) appear.

Now, if $t-\rho$ divides $q_n$  we may set $q_n(t)=(t-\rho)\hat q_n(t)$, and the pair $(p_n,\hat q_n)$ is a convergent to $\sqrt{\cD(t)}$. Also,  $p_n(\rho)\neq 0$, so no $x_i$ is $\xi_\pm$,  and then (since $\varphi_n=p_n-\hat q_n\sqrt{\cD(t)}$) we have the same equation as above, but now in $\cG$:  
\begin{equation*}
(\deg p_n)\delta=[x_1]+\ldots +[x_{d-l}]\quad \hbox{in $\cG$}.
\end{equation*}
Note that $d$ has not been replaced by $d+1$ (and the $x_i$ remain the same); this entails that  the partial quotient now has degree $l+1$, and this represents the `advantage' which we shall exploit. 

\medskip

Now, by Theorem \ref{T.dega}  and Corollary \ref{C.SML}  (both applied to $\cG$)  these equations have to hold for a full arithmetical progression $c+\N\cdot \Pi$ of integers $\deg p_n$, and the Zariski closures in $\cG$, resp. $G$,  of the corresponding multiples of $\delta$ shall contain a coset  $c\delta+\Delta_0(\cM)$ in $\cG$, resp. $c\delta+\Delta_0(\tt m)$ in $G$, and clearly $\pi(\Delta_0(\cM))=\Delta_0(\tt m)$  (indeed, the image is a connected subgroup).  

The above says in particular that a Zariski-dense subset of $c\delta+\Delta_0(\cM)$ is contained in   $W_{d-l}(\cM)$; 
recall also  that $d-l<(d+1)-l=\deg\cD-l\le \deg \cD-1$. We shall use this to prove the following crucial 

\begin{lem}\label{L.dim}  
We have $\dim \Delta_0(\cM)=\dim\Delta_0(\tt m)$.
\end{lem}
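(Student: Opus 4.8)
Here is how I would prove Lemma~\ref{L.dim}.

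The plan is to analyze the surjection $\pi\colon\cG\to G$ restricted to the subgroup $\Delta_0(\cM)$. Its kernel is $\G_{\rm m}$, and we already know $\pi(\Delta_0(\cM))=\Delta_0(\tt m)$; since the only algebraic subgroups of $\G_{\rm m}$ are the finite ones and $\G_{\rm m}$ itself, exactly one of two things can happen: either $\Delta_0(\cM)\cap\G_{\rm m}$ is finite, so that $\pi|_{\Delta_0(\cM)}$ has finite kernel and $\dim\Delta_0(\cM)=\dim\Delta_0(\tt m)$ as wanted, or $\G_{\rm m}\subset\Delta_0(\cM)$, so that $\dim\Delta_0(\cM)=\dim\Delta_0(\tt m)+1$. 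The whole content of the lemma is to exclude the second alternative. (If $D(t)$ is Pellian, $\delta$ is torsion in $G$; but then the standing hypothesis that infinitely many $q_n$ vanish at $\rho$ forces $\mu(\xi_+)$ to be a root of unity, as recalled in the Introduction, so $\delta$ is torsion in $\cG$ too and both dimensions vanish. Hence from now on we assume the second alternative and derive a contradiction; as in the previous proofs one may also reduce at the outset to $\gcd(p_n,D_1)=1$, so that the divisors below are prime to the moduli and the $x_i$ miss $\xi_\pm$.)

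So suppose $\G_{\rm m}\subset\Delta_0(\cM)$ and put $Y:=c\delta+\Delta_0(\cM)$. Then $Y$ is an irreducible subvariety of $\cG$ of dimension $\ge1$, saturated under translation by $\G_{\rm m}=\ker\pi$; the multiples $v_m:=(c+m\Pi)\delta$ lie in $Y$ and, after shrinking the progression if needed, are Zariski-dense in it by Corollary~\ref{C.SML} applied inside $\cG$; and, by the displayed equation preceding the lemma, each $v_m$ equals $[x_1]+\dots+[x_{d-l}]$ in $\cG$ with $x_i=x_i^{(m)}\in\widetilde H\setminus\cS$, so $v_m\in W_{d-l}(\cM)$. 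The key point I would establish is that for every $m$
\[
(v_m+\G_{\rm m})\cap W_{d-l}(\cM)=\{v_m\}.
\]
To see this, let $w=[y_1]+\dots+[y_{d-l}]\in W_{d-l}(\cM)$ (with $y_j\in\widetilde H\setminus\cS$) lie on the $\G_{\rm m}$-coset through $v_m$; applying $\pi$ gives $\sum_j[y_j]=\sum_i[x_i]$ in $G$, so there is a function $f$ on $\widetilde H$ with $\div(f)=\sum_j(y_j)-\sum_i(x_i)$ such that $\div(f/f')$ is strongly equivalent to $0$ relative to $\tt m$. The pole divisor of $f$ is bounded above by $\sum_i(x_i)$, of degree $d-l<d$ — here $l=\deg a_n\ge 1$, and, crucially, in passing from $\sqrt D$ to $\sqrt{\cD}$ the index has stayed $d-l$ rather than becoming $d+1-l$ — so Lemma~\ref{L.unique} forces $f\in\C(t)$. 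But then $f=f'$, hence $\div(f)$ is principal on $\widetilde H$ (so its $J$-part is trivial) and its $\G_{\rm m}$-component in $\cG$ is $f(\xi_+)/f(\xi_-)=f(\rho)/f(\rho)=1$ (note $f$ is finite and nonzero at $\rho$, since the $x_i$ and $y_j$ miss $\xi_\pm$). Thus the class of $\div(f)$ vanishes in $\cG$, i.e. $w=v_m$, proving the claim.

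To finish, $S:=W_{d-l}(\cM)\cap Y$ is a constructible subset of the irreducible variety $Y$ containing the Zariski-dense set $\{v_m\}$, hence contains a dense open $U$ of $Y$. For each $m$ the curve $C_m:=v_m+\G_{\rm m}$ lies in $Y$ (by saturation) and meets $U\subset W_{d-l}(\cM)$ in at most the single point $v_m$ by the key claim; since $C_m$ is irreducible this forces $C_m\subset Y\setminus U$ (otherwise $C_m\cap U$ would be a nonempty open, hence dense and infinite, subset of $C_m$). In particular $v_m\in Y\setminus U$ for every $m$, contradicting the density of $\{v_m\}$ in $Y$. This rules out $\G_{\rm m}\subset\Delta_0(\cM)$, so $\dim\Delta_0(\cM)=\dim\Delta_0(\tt m)$.

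The step I expect to be the real obstacle is the key claim, specifically the passage through Lemma~\ref{L.unique}: everything hinges on the estimate $d-l<d$, i.e. on the fact that the number of base points does not increase when $\sqrt D$ is replaced by $\sqrt{\cD}=(t-\rho)\sqrt D$ — the ``advantage'' singled out in the text — together with $\deg a_n\ge1$; once $f$ is forced into $\C(t)$, the collapse of the $\G_{\rm m}$-coordinate (a rational function of $t$ taking equal values at $\xi_+$ and $\xi_-$) is soft. The ancillary nuisances — $p_n$ possibly sharing a factor with $D_1$, and the genuinely Pellian case — are dispatched by the reductions already in place and by the root-of-unity remark above.
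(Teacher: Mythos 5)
Your proof is correct and follows essentially the same route as the paper's: reduce to the alternative $\G_{\rm m}\subset\Delta_0(\cM)$, exploit constructibility of $W_{d-l}(\cM)$ against the density of the multiples $v_m$ in the coset $Y$, and have Lemma~\ref{L.unique} do the real work via the crucial degree inequality $d-l<d$ (the ``advantage'' gained by passing from $\sqrt D$ to $(t-\rho)\sqrt D$). Where you differ is only in the packaging of the endgame. The paper picks a single $\theta=s\delta$ in the dense open $\O\subset W_{d-l}(\cM)$, notes that a neighborhood of $\theta$ in $\theta+\G_{\rm m}$ lies in $W_{d-l}(\cM)$, chooses a second point $w$ there, and derives a contradiction by showing that the resulting divisor $\sum(x_i)-\sum(y_i)$, being nonzero and the divisor of a function in $\C(t)$, is incompatible with the side conditions on the $x_i$ (no conjugate pairs, ramified points at most once). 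You instead isolate the singleton-intersection as a clean key claim for every $m$, close it by observing that $f\in\C(t)$ makes the class of $\div(f)$ vanish in $\cG$ (rather than showing the divisor itself is impossible), and then obtain the global contradiction from ``all $v_m$ lie in $Y\setminus U$'' versus density. This is an equally valid and arguably tidier endgame, and it has the small advantage of not invoking the conjugacy constraints on the $x_i$ at that step. One detail you should tighten: when you argue that the class of $\div(f)$ vanishes in $\cG$, checking the $J$-part and the $\G_{\rm m}$-coordinate at $\xi_\pm$ is not by itself enough — $\cG$ also carries the $L_\rho=\G_{\rm m}\times\G_a^{e-1}$ (or $\G_a^{e}$) factors coming from the roots of $D_1$, and these components must also be seen to be trivial. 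The cleanest way to say it, entirely in the spirit of your computation, is simply that $f\in\C(t)$ gives $f=f'$, hence $\div(f)\approx\div(f)'$ trivially, so the strong class of $\div(f)$ lies in the subgroup of $J_{\cM}$ quotiented out in the definition of $\cG$; this kills all the extra coordinates at once.
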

\begin{proof}[Proof of lemma]  We note that $\pi$ restricts to a surjective homomorphism $\pi:\Delta_0(\cM)\to \Delta_0(\tt m)$, with kernel a subgroup of $\G_{\rm m}$. If the relevant dimensions are different, then $\G_{\rm m}$ (viewed as the kernel of $\pi$ on the whole $\cG$) is contained in $\Delta_0(\cM)$. 

We know that  $W_{d-l}(\cM)$ is a constructible set, i.e. a finite union   $\bigcup_{i\in I}(X_i\setminus Y_i)$, with $Y_i\subset X_i$   closed subvarieties of $\cG$,  and  that  it contains a Zariski-dense subset  $\cZ$ of $c\delta+\Delta_0(\cM)$ (so $c\delta+\Delta_0(\cM)$ is contained in the closure of $W_{d-l}(\cM)$). Hence $\cZ$ 
  is already contained in the union $\bigcup_{i\in I_1}(X_i\setminus Y_i)$ over the subset $I_1$ of $I$ made up of the $i$ such that $Y_i$ does not contain $c\delta+\Delta_0(\cM)$.
Therefore the closure $c\delta+\Delta_0(\cM)$  is contained in the corresponding finite union   $\bigcup_{i\in I_1}X_i$, whence  a non-empty  open subset $\O$ of $c\delta+\Delta_0(\cM)$ is  contained   too  in the union $\bigcup_{i\in I_1}(X_i\setminus Y_i)$, and hence in   $W_{d-l}(\cM)$. Also, for each point $\theta\in\O$, $\theta+\G_{\rm m}$ is contained in $c\delta+\Delta_0(\cM)$ and then a neighborhood of $\theta$ in $\theta+\G_{\rm m}$ shall be also contained in $\O$  
 and hence in $W_{d-l}(\cM)$.



Let then $\theta=s\delta$, $s:=\deg p_n$,  be one of the above multiples of $\delta$ contained in $\O$ (it exists since these multiples are Zariski-dense in $c\delta+\Delta_0(\cM)$).  
Then  an open neighborhood of $s\delta$ in   $s\delta+\G_{\rm m}$ shall be contained in $W_{d-l}(\cM)$. Take another element    in such open subset of $s\delta+\G_{\rm m}$, represented say by a sum of divisor classes $[y_1]+\ldots +[y_{d-l}]$, $y_i\in H\setminus {\rm supp}(\cM)$. 
Then the difference $\sum [x_i]-\sum [y_i]$  is inside $\G_{\rm m}$, which contains  precisely the  divisor classes which are principal and sent to $0$ in $G$, i.e. divisor classes (in the strong sense)  of functions of the shape $a(t)+b(t)\sqrt{D(t)}$, with no zero or pole inside the support of $\cM$. 
But if such a function has degree $<d$ it must be in $\C(t)$, by Lemma \ref{L.unique}. We conclude that $\sum (x_i)-\sum (y_i)$  (which is $\neq 0$)  is the  divisor of a nonconstant function in $\C(t)$ and in particular is invariant by the involution. However this is excluded by the above conditions on the $x_i$ (non-ramified points cannot appear and ramified ones can appear at most with multiplicity $1$), which proves finally the lemma.
\end{proof}

The lemma implies in particular that the restriction $\cR:=\pi^{-1}(\Delta_0(\tt m))$ of the extension   $\cG$ of $G$   above $\Delta_0(\tt m)$,  is `almost split', in the sense that it is isogenous to a split one;  indeed, we have clearly an isogeny $\G_{\rm m}\times \Delta_0(\cM)\to \cR$ induced by the inclusion maps. 

At least when $D_1=1$, i.e. $G=J$, it may be proved that this corresponds to the point $\xi_+-\xi_-$ having torsion image in the dual abelian variety $\widehat{\Delta}_0$ (after identifying $J$ and $\widehat J$).\footnote{I  thank  Daniel Bertrand for confirming and clarifying this point and for related  indications.}  We could exploit this fact for our proofs; however this would be somewhat lengthy  because it is not easy to locate references in the literature, and moreover   here we would need the analogue statements for generalized Jacobians. Hence we shall follow another path, and shall only  add some detail for this method in Remark \ref{R.ber} below.

\medskip

By the lemma, the restriction of $\pi$ induces an isogeny $\pi: \Delta_0(\cM)\to \Delta_0(\tt m)$, and let $F$ be the kernel, a finite subgroup of $\G_{\rm m}$. 

Let now $\cR$ be as above, and pick $z\in \cR$. There exists $x\in \Delta_0(\cM)$ with $\pi(x)=\pi(z)$, since $\pi$ is surjective.  Hence  $z-x\in\G_{\rm m}$. This difference depends on the choice of $x$, but another choice yields a translation by an element of $F$. Hence  
 $|F|(z-x)\in\G_{\rm m}$  is well-defined and gives us a homomorphism $\psi: \cR\to \G_{\rm m}$. The kernel is clearly $\Delta_0(\cM)$. 

By Lemma \ref{L.SML} we know that $\Delta_0(\tt m)$ (resp. $\Delta_0(\cM)$) is the Zariski closure of the multiples $\mu\Z\delta$ (resp. $\nu\Z\delta$) for certain positive integers $\mu,\nu>0$; we may suppose that $\mu$ (resp. $\nu$) is the minimal positive integer such that $\mu\delta$ (resp. $\nu\delta$) belongs to $\Delta_0(\tt m)$ (resp. $\Delta_0(\cM)$), and then clearly $\mu$ divides $\nu$, say $\nu=h \mu$. 

Let us consider the homomorphism $\psi$ just introduced, restricted to the group $\mu\Z\delta\subset \cR$. The kernel is $\nu\Z\delta$. 
Let $\kappa_0\subset \kappa$ be a field of definition for $H,\delta$ and $\tt m$, so also for $G$, and so $\kappa_1:=\kappa_0(\rho,\xi_\pm)$, which is an extension of $\kappa_0$ of degree $\le 2[\kappa_0(\rho):\kappa_0]$,  is a field of definition also for $\cM$ and $\cG$. We also see that $\psi$ is defined over
at most a quadratic extension $\kappa_2$ (depending possibly on $\rho$) of $\kappa_1$: indeed, the domain $\cR$ and kernel $\Delta_0(\cM)$ are defined over $\kappa_1$, and any variety isomorphic to $\G_{\rm m}$ over some extension, is already isomorphic to $\G_{\rm m}$ over a quadratic extension.\footnote{In fact, this follows since $\G_{\rm m}$ has only $\pm 1$ as automorphism for the algebraic group structure, or else since it has only two points at infinity. 
We also note that one could prove that for this case $\psi$ is  actually defined over $\kappa_1$, even if this is not needed.}   Then $\psi(\mu\delta)$ is defined over $\kappa_2$. However $\psi(\mu\delta)$ is a root of unity of exact order $h$. Hence the $h$-th cyclotomic field is contained in $\kappa_2$.  We conclude that $\varphi(h)\le [\kappa_2:\Q]$, so (for fixed $D(t)$)  $h$ is bounded if the degree of $\rho$ over $\Q$ is bounded.

Let us then suppose that there are infinitely many $\rho$ of bounded degree over $\Q$ and zeros of infinitely many convergents. Then $h$ would be bounded for all of them, and we conclude that (for given $\tt m$)  the relevant progression $\nu\Z$ associated to $\Delta_0(\cM)$ would be the same for infinitely many of these numbers $\rho$. 

Also, for suitable $c$, $(c+\nu m)\delta$ would lie in $W_{d-l}(\cM)$ for every corresponding $\cM=\cM_\rho$ and  large enough $m$ (in terms of $\cM_\rho$); this is because of Corollary \ref{C.SML} which states that the difference  of the relevant progressions is  the same difference related to $\Delta_0(\cM)$. Hence for all large $m$ the divisor $(c+m\nu)\delta$ would be strongly equivalent (relative to the modulus $\cM_\rho$) to a sum of $d-l$ points coprime to $\cM_\rho$, and these points would be uniquely determined independently of $\rho$, because of Lemma \ref{L.unique} applied to $D(t)$.  Select then $K$ of these $\rho$. Then for all large $m$ the above would hold correspondingly to all of these $\rho$, hence by taking the difference of successive elements in the progression we   conclude that $\nu\delta$ is  strongly equivalent to a difference $\sigma:=\sum_{i=1}^{d-l}((x_i)-(y_i))$ relative to each equivalence class corresponding to anyone of the involved $\rho$. But this implies that the strong equivalence holds for the modulus obtained by summing the $\rho$; in other words, $\nu\delta-\sigma$ is the divisor of a nonzero function $a(t)+b(t)u$, where $b(t)$ is divisible by $\prod(t-\rho)$ over these $K$ values of $\rho$ and where $a(t)$ is coprime to this product. But now for $K>\nu+d$ Lemma \ref{L.unique} yields a contradiction, which proves finally the theorem.

\medskip

\begin{small}
\begin{rem}\label{R.ber}  (i) The method yields indeed a sharper result, i.e. the finiteness of the relevant $\rho$ such that the degree of the maximal cyclotomic subfield of  $\kappa_0(\xi)$ is bounded (rather than the degree itself). We note that cyclotomic fields appear, similarly to the easy Pellian case.

(ii) We illustrate the alternative method alluded to above, supposing that $D(t)$ is squarefree and also, for clarity, that the Jacobian $J$ is simple. In this case the canonical $\Delta_0$ equals $J$ (since we are assuming $D$ non-Pellian).  It is known that the classes of extensions of an abelian variety $A$  by $\G_{\rm m}$ is isomorphic to $Pic^o(A)$ (see \cite{SeAGCF}, Thm. 6, p. 184). Now, the group $Pic^o(A)$ is the underlying group of the dual abelian variety $\hat A$, and since a Jacobian is self-dual it is isomorphic to $J$ in the present case. The extension coming from  $\rho$ is checked to correspond to the point $\xi_+-\xi_-$ in $J$ (see Bertrand's paper \cite{Ber}, \S2.1). This yields an extension $\cG$ which is isogenous to a split  one if and only if $\xi_+-\xi_-$ is a torsion point in $J$. An application of Lemma \ref{L.dim} then says that $\rho$ can be a zero of infinitely many $q_n$ only in this case. On the other hand, for $\dim J>1$ this can happen only finitely many times (by a theorem of Hindry \cite{H} generalising Manin-Mumford's conjecture). In this way we have an improved finiteness result.

This method (which is similar to what already appears in Example \ref{EX.g=1}) could be applied more generally (giving often strong finiteness), but we would have to develop a criterion for isogeny to a split extension above an abelian subvariety, and moreover not merely for Jacobians but for general extensions. Since the result we have proved is sufficient for some applications, and since it uses a completely different method, we have preferred to follow the above path, and we plan to develop the other method in a future paper. 
\end{rem}
\end{small}


 
\subsection{Proof of Theorem \ref{T.irr}} \label{SS.T.irr}   By contradiction, let be given an infinite sequence  $\Sigma$ of positive  integers such that $R_n(t)$ has at least two irreducible factors (with multiplicity) over the number field $\kappa$ (a field of definition for $D$), which are all  distinct for distinct  $n$ varying  in $\Sigma$. 


We omit the index $n$ and we write as usual $\varphi=p-qu$ and recall the equation $(\deg p)\delta=\sum j(x_i)$, valid in the Jacobian $J$, where the right hand side is a sum of at most $g=d-1$ points $j(x_i)$ where $x_i\in H$ are points distinct from $\infty_\pm$ and such that no conjugate pair $x,x'$ appear. As already noted, by Lemma \ref{L.unique} this also yields that the $x_i$ are uniquely determined by $\deg p$. In particular, we obtain that the divisor $\chi =\chi_n:=\sum (x_i)$ is invariant by Galois action over $\kappa$.

We operate a preliminary step as follows: if some point  among the $x_i$ appears in an infinite subsequence, we go to such a subsequence, and we continue in such a way for the remaining points. So we can eventually write $\chi$ as a sum $\chi=\chi_0+\chi_1$ of two similar effective divisors, where $\chi_0$ is fixed along the whole subsequence and where no point in $\chi_1$ appears infinitely many times. We can also enlarge $\kappa$ to a finite extension so to suppose that $\chi_0$ is defined over $\kappa$. 

We have $R(t)=R_n(t)=c_n\prod (t-t(x_i))$; hence each irreducible factor corresponds to an effective  divisor $\omega\le \chi$,    such that the set $\{t(x_i)\}$, for $x_i$ in the support of $\omega$,  is invariant by Galois action over $\kappa$.  Note that the points in $\chi_0$ shall give factors of degree $1$ taken from a finite set, and the other factors come from divisors $\omega\le \chi_1$.  Hence we can assume that the two irreducible factors in question come from $\chi_1$. 

Also, since no pair of conjugate points appears in $\chi$, we  deduce that the $t$-value individuates uniquely the point, and so  this set  of   $t$-values individuates uniquely the support.   Hence the divisors $\omega$ (and hence $\chi_1$)  are also invariant by Galois action, and the corresponding sum in $J$ is thus in $J(\kappa)$. 
 
But then the former Lang's conjecture, proved by Faltings in \cite{Fa}, \cite{Fa2}, implies that the Zariski-closure of all of  these points consists of  the union of finitely many cosets of  abelian subvarieties of $J$.  
 
 Now we again stick to the divisor  $\chi_1$. The sum of the degrees of the divisors $\omega\le \chi_1$ which arise from the various irreducible factors is $\deg\chi_1\le\deg\chi\le d-1$, so since we are assuming that there are two irreducible factors coming from $\chi_1$,  in particular the smallest such degree $s$ is $<d/2$. On going to an infinite subsequence, we can suppose that $s$ is fixed and that all the rational points corresponding to this minimal degree lie in  a same coset, say $a+A$, of the abelian subvariety $A$ of $J$, and that $a+A$ is their Zariski-closure. Also, these points lie in $W_s$, hence $a+A\subset W_s$.\footnote{Indeed, note that $W_s$ is Zariski-closed. For generalized Jacobians  this would not hold, causing a mild complication.}   Then, if $\xi_1,\xi_2,\zeta$ are  three effective divisors of degree $\le s$ such that $j(\xi_i), j(\zeta)\in  a+A$,  then $ j(\xi_1)+j(\xi_2)-j(\zeta)\in a+A\subset W_s$, so $\xi_1+\xi_2-\zeta$ is linearly equivalent to a sum of $s$ points on $H$, an effective  divisor $\theta$ of degree $s$.  In conclusion, we obtain a linear equivalence $\xi_1+\xi_2\sim \zeta+\theta$, which by Lemma \ref{L.unique} (and since $s<d/2$), yields that this is the divisor of a function in $\C(t)$. 
 
 We can now take $\xi_1=\xi_2=\xi=$ a fixed one among the above divisors $\omega$, and we let $\zeta$ vary  among the effective divisors of degree $s$ such that $j(\zeta)\in a+A$. The fact that  $2\xi-\zeta-\theta$ is the divisor of a function in $\C(t)$ implies that it is invariant by conjugation, and we easily deduce that some point in the support of any of the $\zeta$ belongs to a fixed finite set. But this was excluded by the above opening step, and we have the desired contradiction, concluding the main part of the proof.

 The same argument shows that the degree of this irreducible factor must be $\ge d/2$. 
 
 Finally, if the same irreducible factor appears in $R_m,R_n$, $m<n$, then the ratio $\varphi_n/\varphi_m$ has a nonzero divisor of the shape $h\delta+\omega$, with $h=n-m$ and $\omega$ difference of divisors of bounded degree and support. If we have sufficiently many such equations, with pairwise distinct $h$, we must find the same $\omega$ and subtraction yields that $\delta$ is torsion in $J$, against the assumption that $D$ is non-Pellian. This concludes the proof.

\medskip

\begin{small}
\begin{rem}
Inspection  shows that we can gain some additional precision (e.g. concerning periodic patterns and also proving that $\Phi$ does not depend on $\kappa$ for large $n$), on which we do not comment here. 

Also, dimensional considerations point out that there should exist  cases when some (fixed)  factor of $R_n$ appears infinitely many times, so $R_n$ is not itself  irreducible. (This  happens e.g. for $D(t)=t^8 - 7t^7 + (53/4)t^6 + (3/2)t^5 - (69/4)t^4 + (3/2)t^3 + (53/4)t^2 -7t+ 1$ and  the factor $t-1$.)  For space reasons we postpone any discussion of this to a possible future paper. 
 \end{rem}
 \end{small}


\bigskip\bigskip

\vfill

Umberto Zannier

Scuola Normale Superiore

Piazza dei Cavalieri, 7 -  56126 Pisa - ITALY

u.zannier@sns.it


\begin{thebibliography}{99}



\bibitem{[A]} - N.H. Abel, \"Uber die Integration der Differential-Formel $\rho dx/\sqrt R$, 
wenn $R$ und $\rho$ ganze Funktionen sind, J. f\"ur Math. (Crelle), 1 (1826), 185--221.

\smallskip

\bibitem{A-R} - W. Adams, M. Razar, Multiples of points on elliptic curves and continued fractions, Proc. London Math. Soc., 41 (1980), 481--498.


\smallskip

 



\bibitem{ACGH} - E. Arbarello, M. Cornalba, P. Griffiths, J. Harris, Geometry of Algebraic Curves I, Springer Verlag, 1985.




\smallskip


\bibitem{BGKT} - R. Benedetto, D. Ghioca, P. Kurlberg, T. Tucker, A gap principle for dynamics, Compositio Math. (2010), 146, 1056--1072.

\smallskip

\bibitem{Be} - T. G. Berry, A Type of Hyperelliptic Continued Fraction, Monatsh. Math., 145 (2005), 269--283.

\smallskip

\bibitem{Ber} - D. Bertrand, Generalized Jacobians and Pellian Polynomials, J. Th\'eorie des Nombres de Bordeaux, 2015.

\smallskip

\bibitem{BMPZ} - D. Bertrand, D. Masser, A. Pillay, U. Zannier, Relative Manin-Mumford for semi-abelian surfaces, Proc. Edinburgh Math. Soc., 2015. 
 
\smallskip

\bibitem{[BG]} - E. Bombieri, W. Gubler, Heights in Diophantine Geometry, New Math. monographs, vol. 4, Cambidge Univ. Press, 2006.

\smallskip

\bibitem{Bo-Co} - E. Bombieri and P.B.  Cohen, Siegel's lemma, Pad\'e approximations and Jacobians, Ann. Sc. Normale Sup. Cl. Sci., 1998.



\smallskip

\bibitem{[C]} - J.W.S. Cassels, Introduction to Diophantine Approximation, Cambridge Tracts, n. 45, Hafner 1972.

\smallskip

\bibitem{Che} - P.L. Chebyshev, Sur l'int\`egration des diff\'erentielles qui contiennent une racine carr\'ee d'un polynome du troisi\`eme ou du quatri\`eme degr\`e, J. Math. Pures Appl. 2 (1857), 1--42.

 \smallskip

\bibitem{CMZ} - P. Corvaja, D. Masser, U. Zannier, Sharpening Manin-Mumford for certain algebraic groups of dimension $2$, L'Enseign. Math., (with a letter of Serre to Masser as an appendix), 59 (2013), no. 3-4.
 

 \smallskip



\bibitem{DF} - O. Debarre, R. Falhaoui, Abelian varieties in $W_d^r(C)$ and points of bounded degree on algebraic curves, Comp. Math., 88 (1993), 235-249.

\smallskip

\bibitem{Fa} - G. Faltings, Diophantine approximation on abelian varieties, Ann. of Math. 133 (1991), 549-576.
  
  
  \smallskip
  
  \bibitem{Fa2} - G. Faltings, The general case of S. Lang's conjecture. Barsotti Symposium in Algebraic Geometry (Abano Terme, 1991), 175--182, Perspect. Math., 15, Academic Press, San Diego, CA, 1994. 
  
\smallskip


\bibitem{F} - G. Frey, Curves with infinitely many points of fixed degree, Israel J.   Math., 85 (1994), 79--83.


 \smallskip
 
 \bibitem{H} - M. Hindry, Autour d'une conjecture de Serge Lang, Invent. Math. 94 (1988), 575--603.
  
 
 




\smallskip

\bibitem{McM} - C.T. MacMullen, Uniformly Diophantine numbers in a fixed real quadratic field. Compos. Math. 145 (2009), no. 4, 827--844.

\smallskip

\bibitem{McM2} - C.T. MacMullen, Teichm\"uller curves in genus two: torsion divisors and ratios of sines. Invent. Math. 165 (2006), 3, 651--672. 

\smallskip

\bibitem{Ma} - F. Malagoli, PhD thesis, Universit\`a di Pisa, in preparation.
 
 
   
 
 

 \smallskip

\bibitem{MZ} - D. Masser, U. Zannier, Torsion points on families of simple abelian surfaces and Pell's equation over polynomial rings,  JEMS, 2015.

 
\smallskip

\bibitem{Me} - O. Merkert, PhD thesis, Scuola Normale Superiore, in preparation.

 
 \smallskip


\bibitem{Mum} - D. Mumford,  Lectures on Theta II, Birkhauser Boston 1984. 




\smallskip

\bibitem{[vdP]} - A.J. van der Poorten, Non-periodic continued fractions in hyperelliptic function fields, Bull. Austr. Math. Soc. 64 (2001), 331--343.
\smallskip

\bibitem{[vdPT]} - A.J. van der Poorten, X.C. Tran,  Quasi-elliptic integrals and periodic continued fractions, Monatsh. Math.  131 (2000), 155-169.

\smallskip

\bibitem{Min} - M. Ru, A weak effective Roth's theorem over function fields, Rocky mountain J. of Math., 30 (2000), 723--734.




 \smallskip



\bibitem{Schi} - A. Schinzel, On some problems of the arithmetical theory of continued fractions II, Acta Arith., 7 (1962), 287--298. Corrigendum ibid.  47 (1986), 295.

   

 

 \smallskip
 
 \bibitem{Schm} - W.M. Schmidt  On continued fractions and diophantine approximation in power series fields, Acta Arith., 95, (2) 2000, 139--166. 

 

\smallskip
 
\bibitem{SeAGCF} - J-P. Serre, Algebraic Groups and Class Fields, Springer-Verlag GTM 117, 1988.

\smallskip


\bibitem{SeMWT} - J-P. Serre, Lectures on the Mordell-Weil Theorem, Vieweg, 1997.
 

\smallskip

\bibitem{SeLALG} - J-P. Serre, Lie Algebras and Lie Groups, Springer LNM 1500, 1992.

 

 \smallskip
 
 \bibitem{St} - M. Stoll, Uniform bounds for the number of rational points on hyperelliptic curves of small Mordell-Weil rank, preprint, 2015. 
 

\smallskip



\bibitem{[Z]} - U. Zannier, Some Problems of Unlikely Intersections in Arithmetic and Geometry (with Appendixes by D. Masser), Annals of Math. Studies, n. 181, Princeton Univ. Pres, 2012.

 
 \smallskip

\bibitem{[Z3]} - U. Zannier, Unlikely Intersections and Pell's Equation in Polynomials, Ch. 12, INDAM Volume, Springer Verlag, 2014.

\smallskip

\bibitem{[Z4]} - U. Zannier, Elementary integration of differentials in families and conjectures of Pink, Proc. ICM 2014.

\smallskip

\bibitem{[Z5]} - U. Zannier, Lecture Notes on Diophantine Analysis, Appunti, n. 8, Edizioni della Normale, 2009, reprinted 2014.


\smallskip

  

 
\smallskip

 


\end{thebibliography}
\end{document}